\newcommand\iso{\stackrel{\sim}{\smash{\longrightarrow}\rule{0pt}{0.4ex}}}
\def\le{\leqslant}
\def\s{\sigma}
\def\<{\langle}
\def\>{\rangle}
\newcommand{\fkp}{\ensuremath{\mathfrak{p}}\xspace}
\newcommand{\fks}{\ensuremath{\mathfrak{s}}\xspace}
\newcommand{\fkt}{\ensuremath{\mathfrak{t}}\xspace}
\newcommand{\fkB}{\ensuremath{\mathfrak{B}}\xspace}
\newcommand{\fkI}{\ensuremath{\mathfrak{I}}\xspace}
\newcommand{\fkM}{\ensuremath{\mathfrak{M}}\xspace}
\newcommand{\fkO}{\ensuremath{\mathfrak{O}}\xspace}
\newcommand{\fkR}{\ensuremath{\mathfrak{R}}\xspace}
\newcommand{\fkS}{\ensuremath{\mathfrak{S}}\xspace}
\newcommand{\fkZ}{\ensuremath{\mathfrak{Z}}\xspace}
\newcommand{\heart}{{\heartsuit}}
\newcommand{\spade}{{\spadesuit}}
\newcommand{\bG}{\mathbf G}
\newcommand{\bT}{\mathbf T}
\newcommand{\bM}{\mathbf M}
\newcommand{\BC}{\ensuremath{\mathbb {C}}\xspace}
\newcommand{\BF}{\ensuremath{\mathbb {F}}\xspace}
\newcommand{{\BG}}{\ensuremath{\mathbb {G}}\xspace}
\newcommand{{\BK}}{\ensuremath{\mathbb {K}}\xspace}
\newcommand{\BN}{\ensuremath{\mathbb {N}}\xspace}
\newcommand{\BQ}{\ensuremath{\mathbb {Q}}\xspace}
\newcommand{\BR}{\ensuremath{\mathbb {R}}\xspace}
\newcommand{\BZ}{\ensuremath{\mathbb {Z}}\xspace}
\newcommand{\CA}{\ensuremath{\mathcal {A}}\xspace}
\newcommand{\CB}{\ensuremath{\mathcal {B}}\xspace}
\newcommand{\CH}{\ensuremath{\mathcal {H}}\xspace}
\newcommand{\CI}{\ensuremath{\mathcal {I}}\xspace}
\newcommand{\CJ}{\ensuremath{\mathcal {J}}\xspace}
\newcommand{\CK}{\ensuremath{\mathcal {K}}\xspace}
\newcommand{\CO}{\ensuremath{\mathcal {O}}\xspace}
\newcommand{\CV}{\ensuremath{\mathcal {V}}\xspace}
\newcommand{\CW}{\ensuremath{\mathcal {W}}\xspace}
\newcommand{\CZ}{\ensuremath{\mathcal {Z}}\xspace}
\newcommand{\ca}{\mathbcal{a}}
\newcommand{\tF}{{\widetilde{F}}}
\newcommand{\tH}{{\widetilde{H}}}
\newcommand{\tJ}{\widetilde{J}}
\DeclareMathOperator{\End}{\text{End}}
\DeclareMathOperator{\Mod}{\text{mod}}
\newcommand{\GL}{\mathrm{GL}}
\DeclareMathOperator{\Hom}{Hom}
\newcommand{\Ind}{{\mathrm{Ind}}}
\newcommand{\ind}{{\mathrm{ind}}}
\newcommand{\Int}{\ensuremath{\mathrm{Int}}\xspace}
\DeclareMathOperator{\Irr}{Irr}
\DeclareMathOperator{\Ker}{Ker}
\DeclareMathOperator{\Res}{Res}
\DeclareMathOperator{\vol}{vol}
\newcommand{\bS}{{\breve{\bf S}}}
\newtheorem{theorem}{Theorem}
\newtheorem{proposition}[theorem]{Proposition}
\newtheorem{lemma}[theorem]{Lemma}
\newtheorem{corollary}[theorem]{Corollary}
\theoremstyle{definition}
\newtheorem{definition}[theorem]{Definition}
\newtheorem{remark}[theorem]{Remark}
\newtheorem{assumption}[theorem]{Assumption}
\def\nd{\mathrm{nd}}
\numberwithin{equation}{section}
\numberwithin{theorem}{section}
\renewcommand{\to}{%
   \ifbool{@display}{\longrightarrow}{\rightarrow}%
   }
\let\shortmapsto\mapsto
\renewcommand{\mapsto}{%
   \ifbool{@display}{\longmapsto}{\shortmapsto}%
   }
\newlength{\olen}
\newlength{\ulen}
\newlength{\xlen}
\newcommand{\xra}[2][]{%
   \ifbool{@display}%
      {\settowidth{\olen}{$\overset{#2}{\longrightarrow}$}%
       \settowidth{\ulen}{$\underset{#1}{\longrightarrow}$}%
       \settowidth{\xlen}{$\xrightarrow[#1]{#2}$}%
       \ifdimgreater{\olen}{\xlen}%
          {\underset{#1}{\overset{#2}{\longrightarrow}}}%
          {\ifdimgreater{\ulen}{\xlen}%
             {\underset{#1}{\overset{#2}{\longrightarrow}}}
             {\xrightarrow[#1]{#2}}}}%
      {\xrightarrow[#1]{#2}}
   }
\newcommand{\xyra}[2][]{%
   \settowidth{\xlen}{$\xrightarrow[#1]{#2}$}%
   \ifbool{@display}%
      {\settowidth{\olen}{$\overset{#2}{\longrightarrow}$}%
       \settowidth{\ulen}{$\underset{#1}{\longrightarrow}$}%
       \ifdimgreater{\olen}{\xlen}%
          {\mathrel{\xymatrix@M=.12ex@C=3.2ex{\ar[r]^-{#2}_-{#1} &}}}%
          {\ifdimgreater{\ulen}{\xlen}%
             {\mathrel{\xymatrix@M=.12ex@C=3.2ex{\ar[r]^-{#2}_-{#1} &}}}
             {\mathrel{\xymatrix@M=.12ex@C=\the\xlen{\ar[r]^-{#2}_-{#1} &}}}}}%
      {\mathrel{\xymatrix@M=.12ex@C=\the\xlen{\ar[r]^-{#2}_-{#1} &}}}%
   }
\newcommand{\xla}[2][]{%
   \ifbool{@display}%
      {\settowidth{\olen}{$\overset{#2}{\longleftarrow}$}%
       \settowidth{\ulen}{$\underset{#1}{\longleftarrow}$}%
       \settowidth{\xlen}{$\xleftarrow[#1]{#2}$}%
       \ifdimgreater{\olen}{\xlen}%
          {\underset{#1}{\overset{#2}{\longleftarrow}}}%
          {\ifdimgreater{\ulen}{\xlen}%
             {\underset{#1}{\overset{#2}{\longleftarrow}}}
             {\xleftarrow[#1]{#2}}}}%
      {\xleftarrow[#1]{#2}}
   }
\newcommand{\isoarrow}{%
   \ifbool{@display}{\overset{\sim}{\longrightarrow}}{\xrightarrow\sim}%
   }
 \newcommand{\bF}{{\breve{F}}}
\newcommand{\ba}{{\breve{a}}}
\newcommand{\bc}{{\breve{c}}}
\newcommand{\bal}{{\breve{\mathbcal{a}}}}
\newcommand{\nr}{\mathrm{nr}}
\begin{document}
\title[]{The center of Hecke algebras of types}

\author[Reda Boumasmoud]{Reda Boumasmoud}
\address{Institut für Mathematik
Universität Zürich
Winterthurerstrasse 190
CH-8057 Zürich, Switzerland}
\email{reda.boumasmoud@math.uzh.ch}

\author{Radhika Ganapathy}
\address{Department of Mathematics, Indian Institute of Science, Bengaluru, Karnataka  - 560012, India.}
\email{radhikag@iisc.ac.in}

\keywords{$p$-adic groups, Hecke algebras, types, Bernstein decomposition}

\subjclass[2010]{11F70, 22E50}
\begin{abstract} We describe the center of the Hecke algebra of a type attached to a Bernstein block under some hypothesis. When $\bf G$ is a connected reductive group over non-archimedean local field $F$ that splits over a tamely ramified extension of $F$ and the residue characteristic of $F$ does not divide the order of the absolute Weyl group of $\bf G$, the works of Kim-Yu and Fintzen associate a type to each Bernstein block and our hypothesis is satisfied for such types. We use our results to give a description of the Bernstein center of the Hecke algebra $\CH({\bf G } (F),K)$ when $K$ belongs to a nice family of compact open subgroups of ${\bf G }(F)$ (which includes the Moy--Prasad filtrations of an Iwahori subgroup) via the theory of types.
\end{abstract}
\maketitle
\section*{Introduction}
Let $F$ be a non-archimedean local field. For a connected, reductive group $\bf G$ over $F$, we write $G$ for its $F$-points.

Let $\fkR(G)$ denote the category of smooth, complex representations of $G$.  Let $\fkB(G)$ for the set of all inertial equivalence classes in $G$ (this definition is recalled in Section \ref{BD}). The Bernstein decomposition yields

\[\fkR(G) = \prod_{\fks \in \fkB(G)} \fkR^\fks(G).\]

 We are interested in describing the center of $\fkR^\fks(G)$, $\fks \in \fkB(G)$. Let $J$ be a compact open subgroup of $G$ and let $\rho$ be an irreducible representation of $J$ such that $(J,\rho)$ is an $\fks$-type (see Definition \ref{def:type}). Then the category $\fkR^\fks(G)$ is equivalent to $\CH(G, \rho)-\text{mod}$. This leads us to the question of understanding the center of Hecke algebras of types. 
 
 We take us this question for supercuspidal blocks in Section \ref{sometypessec}. First, suppose $\pi$ is an irreducible supercuspidal representation of $G$ of the form $\ind_{\tJ}^G \tilde\rho$, where $\tJ$ is an open, compact mod center subgroup of $G$ and $\tilde\rho$ is an irreducible representation of $\tJ$. Let $^0G$ be the open normal subgroup of $G$ as in \eqref{G0} and let $J = ^0G \cap \tJ$ and let $\rho$ be an irreducible summand of $\Res^{\tilde J}_{J}\tilde\rho$. Then $(J, \rho)$ is an $\fks = [G,\pi]_G$-type.  Assume that the intertwiners of $\rho$, denoted $\CI_G(\rho)$, is contained in $ \tJ$. These requirements are satisfied for supercuspidal representations arising out of Yu's construction (see \cite{Yu01}), which exhaust all supercuspidal representations of $G$ by \cite{Fin21} under the hypothesis that $\bf G$ splits over a tamely ramified extension of $F$ and the residue characteristic of $F$ does not divide the order of the Weyl group of $\bf G$. Let $\pi_0$ be an irreducible summand of $\Res^{G}_{{}^0 G}\pi$. We show in Theorem \ref{typesmultiplicites} that multiplicity with which $\pi_0$ occurs in $\Res^{G}_{{}^0 G}\pi$  is equal to the multiplicity with which $\rho$ occurs in $\Res^{\tilde J}_{J}\tilde\rho$. This theorem allows us to translate some results about Bernstein blocks discussed in \cite{Roc09} into statements about types attached to the Bernstein block and in particular allows us to describe the center $\CH(G,\rho)$  of the type $(J,\rho)$. We show that the center $\CZ(\CH(G, \rho)) \simeq \BC[{}^\dagger J/J]$ where  ${}^\dagger J =\bigcap_{\nu \in X_{\tJ}(\tilde\rho)} \ker(\nu)$, with $X_{\tJ}(\tilde\rho) = \{ \nu \in \Hom(\tJ/J, \BC^\times) \;|\; \tilde\rho\otimes \nu\simeq \tilde\rho\}$ (See Lemma \ref{clifbis}, Lemma \ref{jdag} and Theorem \ref{centerhecke}). We deduce that the Hecke algebra $\CH(G,\rho)$ is commutative if and only if $\pi|_{^0G}$ is multiplicity free (see Proposition \ref{Prop:HAComm}). 

   In Section \ref{sec:nonsup}, we take up the question of describing the center of non-supercuspidal blocks. Let $\fks=[M, \sigma]_G$ and $\fks_M = [M, \sigma]_M$. We assume that $\sigma$ is an irreducible supercuspidal representation of $M$ of the form  $\ind_{\tJ_M}^M \tilde\rho_M$, where $\tJ_M$ is an open, compact mod center subgroup of $M$ and $\tilde\rho_M$ is an irreducible representation of $\tJ_M$. Let $(J_M, \rho_M)$ be the $\fks_M$-type as before.  Again we assume that $\CI_G(\rho_M) \subset \tJ_M$. Let $(J,\rho)$ be a $G$-cover of $(J_M, \rho_M)$. Then $(J,\rho)$ is an $\fks$-type. After proving some preparatory technical results in Sections \ref{gequiv} and \ref{sec:nortype}, we show in Section \ref{WeylAcCEnt} that the center $\CZ(\CH(G,\rho)) \simeq \BC[{}^\dagger J_M/J_M]^{W(\rho_M)}$ where $W(\rho_M)$ is described in Proposition \ref{Gequiv} (see Theorem \ref{Satakegen}).

We note here that all the results in Section \ref{sometypessec}, Section \ref{gequiv}, and Section \ref{sec:nortype} hold when ${}^0 G$ is replaced by a more general open normal sugroup ${}^\flat G$ defined in Section \ref{def:flat}. In fact they hold when ${}^\flat G = G(F)_1$, the kernel of the Kottwitz homomorphism. Further, Theorem \ref{genpro54}, Corollary \ref{isogflat0}, and Corollary \ref{cor:2flat-0} enable a passage between working with ${}^0 G$ and ${}^\flat G$. 
 
 Now, assume that $\bf G$ splits over a tamely ramified extension of $F$ and the residue characteristic of $F$ does not divide the order of the absolute Weyl group of $\bf G$. Then by \cite{KY17, Fin21}, every Bernstein block has a Kim-Yu type attached to it and our results in the preceeding paragraphs hold for such types. We use this to give a description of the Bernstein center of $\CH(G,K)$ for certain nice compact open subgroups $K$ of $G$. Let us describe what these compact open subgroups are.

 Let $\CB(G,F)$ denote the Bruhat--Tits building of $\bf G$ over $F$. 
Let $\bf S$ be a maximal $F$-split torus in $\bf G$ and let $\CA(S, F)$ be the apartment of $\bf S$ over $F$.  For a compact open subgroup $K$  of $G$ and let $\fkR_K(G)$ be the full sub-category of $\fkR(G)$ consisting of representations $(\pi, V)$ that are generated by their $K$-fixed vectors. In \cite[Section 3.7 - 3.9]{BD84}, the authors put criteria $\heart_S$ (see Definition \ref{heart}) on the compact open subgroup $K$ and prove that the category $\fkR_K(G)$ is closed under taking sub-quotients when $K$ satisfies these criteria. They further show that if $x$ is a special point in $\CA(S,F)$, then $G_{x,r}$ satisfies $\heart_S$ for each $r>0$. It was long expected that the category $\fkR_K(G)$ is closed under taking sub-quotients whenever $K = G_{x,r}$ for all points $x \in \CB(G,F)$ and all $r>0$. In \cite{BS20}, Bestvina--Savin put slightly different criteria $\spade_S$ (see Definition \ref{spade}) on the compact open subgroup $K$ for which the category $\fkR_K(G)$ is closed under taking sub-quotients. 
Further, they prove that $G_{x,r}$ satisfies $\spade_S$ for each $x \in \CA(S,F)$ and each $r>0$. If $K$ satisfies $\heart$ or $\spade$, then there is a finite subset $\fkS(K) \subset \fkB(G)$ such that
\[\fkR_K(G) = \prod_{\fks \in \fkS(K)} \fkR^\fks(G).\]

When $K$ satisfies $\spade_S$, it is easy to see that for a Levi subgroup $\bf M$ of $\bf G$ that contains $\bf S$, and a representation $\sigma$ of $M$, if $(\Ind_P^G \sigma)^K \neq 0$, then   $\sigma^{K'_M} \neq 0$ for a $G$-conjugate $K'$ of $K$ (that has an Iwahori factorization), where $\bf{P = MN}$ is a parabolic subgroup of $\bf G$ with Levi $\bf M$ and $K_M = K\cap P/K \cap N$. On the other hand, when $K$ satisfies $\heart_S$ it follows that if $(\Ind_P^G \sigma)^K \neq 0$, then   $\sigma^{K_M} \neq 0$. This property yields finer information about the set $\fkS(K)$ (see Lemma \ref{BuProp}). For this reason, it is helpful to know which $G_{x,r}, x \in \CA(S,F), r>0,$ also satisfy $\heart_S$. We prove two results in this direction. First, we show that if $\ca$ is an alcove in $\CA(S,F)$ and $x \in \ca$, then $G_{x,m}, m \in \BN,$ always satisfies $\heart_S$ (See Proposition \ref{PropIm} for the precise statement). Next, we give two examples of Moy-Prasad filtration subgroups that don't satisfy $\heart_S$. The first one is the Moy-Prasad filtration subgroup $G_{x,1} \subset \GL_3(F)$, where $x$ is a non-special point in the boundary of an alcove of $\CA(S,F)$. The second is the  Moy-Prasad filtration subgroup $G_{x_b, 3/8} \subset \GL_4(F)$ where $x_b$ is the barycenter of an alcove of $\CA(S,F)$ (see Section \ref{CounterEg}). 

In Section \ref{BC}, we discuss some applications of the results in the preceding sections to Yu's supercuspidal representations. Let $\pi = \ind_{\tJ}^G \tilde \rho$ be a supercuspidal representation arising out of Yu's contruction and let $(J, \rho)$ be the type attached to it as before. We show that the multiplicity with which $\rho$ occurs in $\tilde\rho$ agrees with its depth 0 counterpart that is part of the initial datum of Yu's construction; see Lemma \ref{LemmaonYu} for the precise statement. We give a simple application of this lemma in Corollary \ref{CorHAYu}. Finally, we give a description of the Bernstein center of $\CH(G,K)$ where $K$ is a compact open subgroup of $G$ that satisfies $\spade_S$ or $\heart_S$, using the theory of types.
\section*{Acknowledgements}
This project has its origin in a note that Thomas Haines had shared with the first author which indicated that one could try to use the theory of types to describe the center of Hecke algebras at deeper level. We are grateful to him for this suggestion. We thank  Xuhua He, Dipendra Prasad, Cheng-Chiang Tsai, Marie-France Vigneras, and Jiu-Kang Yu for the many helpful comments, suggestions and corrections on a previous draft of this article. 
\section{The Bernstein center}
Let $F$ be a non-archimedean local field and let $\bf G$ be a connected, reductive group over $F$. Let $G = {\bf G}(F)$. Let $\bf Z$ denote the center of $\bf G$.

In this paper, we will consider two induction functors and a restriction functor (see \cite[Section I.5.1]{Vigneras1996}):
 \begin{itemize}
     \item $\Ind$ denotes the usual induction functor.
     \item $\ind$ denotes the compact induction functor. 
     \item $\Res$ denotes the restriction functor.
 \end{itemize}

\subsection{The Bernstein decomposition}\label{BD}
 Let $X_F(G)$ be the group of $F$-rational characters $\chi: G \rightarrow F^\times$ of $G$. For $\chi \in X_F(G)$ and $s \in \BC$, we define a smooth one-dimensional representation $g \rightarrow |\chi(g)|_F^s$ of $G$. Let $X_{\nr}(G)$ be the group of unramified quasi-characters of $G$, generated by maps $G \rightarrow \BC^\times$ of this form. We write 
\begin{align}\label{G0}
    \displaystyle{^0G =  \bigcap_{\chi \in X_\nr(G)} \Ker(\chi).}
\end{align}
The quotient $G/^0G$ is free abelian of finite rank and $X_\nr(G) = \Hom(G/^0G, \BC^\times)$.

Let  $\fkR(G)$ the category of smooth, complex representations of $G$ and $\fkZ$ its center. Let ${\Irr}(G)$ the set of irreducible objects in $\fkR(G)$.

We consider pairs $(M, \sigma)$ where $\bf M$ is an $F$-Levi subgroup of $\bf G$ and $\sigma$ is a supercuspidal representation of $M$. Two pairs $(M_1, \sigma_1)$ and $(M_2, \sigma_2)$ are inertially equivalent if there exist $g \in G$ and $\chi \in X_{\nr}(M_2)$ such that 
\[{\bf M_2} = {}^g {\bf M_1} \text{ and } {}^g\sigma_1 = \sigma_2 \otimes \chi.\]
Here ${}^g {\bf M_1} = g {\bf M_1} g^{-1}$ and ${}^g\sigma_1: x \rightarrow \sigma_1(g^{-1}xg)$, for $x \in {}^g M_1$. We write $[M, \sigma]_G$ for the inertial equivalence class of the pair $(M, \sigma)$ and $\fkB(G)$ for the set of all inertial equivalence classes in $G$. 

Now, let $\pi$ be an irreducible, smooth representation of $G$. There exists an $F$-parabolic subgroup ${\bf P}$ of $\bf G$ with Levi component $\bf M$, and an irreducible, supercuspidal representation $\sigma$ of $M$ such that $\pi$ occurs as an irreducible sub-quotient of the 
normalized parabolically induced representation $\Ind_P^G(\sigma)$. 
The representation $\pi$ determines a unique inertial equivalence class $[M, \sigma]_G$ which we denote as $\fkI(\pi)$ and call it the inertial support of $\pi$ (See \cite[\S II.2.20]{Vigneras1996} for more properties). 





For $\fks \in \fkB(G)$ we define a full subcategory $\fkR^\fks(G)$ of $\fkR(G)$ as follows. Let $(\pi, V) \in \fkR(G)$. Then $(\pi, V) \in \fkR^\fks(G)$ if and only if every irreducible subquotient of $\pi$ has inertial support $\fks$. 

Let us recall some results on the Bernstein decomposition. 
\begin{enumerate}
    \item We have $\fkR(G) = \prod_{\fks \in \fkB(G)} \fkR^\fks(G).$ (see \cite[Theorem 1.7.3.1]{Roc09}).
    \item Let $\fks = [G, \pi]_G$ and let $\Irr^\fks(G)$ denote the set of isomorphism classes of irreducible objects in $\fkR^\fks(G)$. Let $\fkZ^\fks$ denote the center of $\fkR^\fks(G)$. Then $\Irr^\fks(G)$ can be endowed with the structure of a complex affine variety whose ring of regular functions can be identified with the center $\fkZ^\fks$ (see \cite[Section 1.6.3]{Roc09}). 
    \item Let $\fkt = [M, \sigma]_M \in \fkB(M)$ and let $\fks = [M, \sigma]_G \in \fkB(G)$. The action of $N_G(M)$ on $M$ by conjugation induces an action of $W(M)$ on $\fkB(M)$. Let $W_\fkt$ denote the stabilizer of $\fkt$. Thus $W_\fkt = N_\fkt/M$ where
\[N_\fkt = \{ n \in N_G(M) \;|\; ^n \sigma \simeq \sigma \nu, \text{ for some } \nu \in X_{\nr}(M)\}.\]
Let $\fkZ^\fkt$ denote the center of $\fkR^\fkt(M)$. The group $W_\fkt$ acts on $\Irr^\fkt(M)$ and hence acts on $\fkZ^\fkt$, the ring of regular functions of  $\Irr^\fkt(M)$. Let $\fkZ^\fks$ denote the center of $\fkR^\fks(G)$. Then (see \cite[Theorem 1.9.1.1]{Roc09})
\[ \fkZ^\fks = (\fkZ^\fkt)^{W_\fkt}.\]
\end{enumerate}

\subsection{Type associated to a Bernstein block}
 Let 
$\Omega(G)$ the set of open compact subgroups of $G$ and let $\Omega(G/Z)$ the set of open subgroups of $G$ containing $Z$ and compact mod $Z$. Let $\Irr(H)$ denote the set of irreducible representations of $H$ for $H \in \Omega(G)$ or $\Omega(G/Z)$.
\begin{definition}\label{def:type}
Let $J \in \Omega(G)$ and $\rho\in \Irr(J)$. For a subgroup $H$ of $G$ that contains $J$,
let $\CH(H, \rho) := \End_H(\ind_J^H(\rho))$. The pair $(J, \rho)$ is a \textit{type} in $G$ if it satisfies the following equivalent conditions (see \cite[Section 4.2]{BK98}): 
\begin{enumerate}
\item there exists a finite subset ${\fkS(J,\rho)}$ of $\fkB(G)$ such that for all $ \pi \in \Irr(G)$, $\fkI(\pi) \in {\fkS(J,\rho)} $ if and only if $\Hom_J(\rho, \pi) \neq 0$; 
\item Let $ \fkR_\rho$ denotes the category of representations of $G$ that are generated by their $\rho$-isotypic subspace. Then $\fkR_\rho$ is closed under subquotients. Further \[\fkR_\rho = \fkR^{{\fkS(J,\rho)}}(G):=\prod_{\fks \in {\fkS(J,\rho)}} \fkR^{\fks}(G);\]
\item the functor 
\[\fkM_\rho: \fkR_\rho \rightarrow \CH(G, \rho)-\Mod, \quad \pi \mapsto \Hom_J(\rho, \pi)\] is an equivalence of categories, where $\CH(G, \rho)-\Mod$ denotes the category of right-modules over $\CH(G, \rho)$. 
\end{enumerate}
\end{definition}
In this situation, the pair $(J,\rho)$ is called an \textit{${\fkS(J,\rho)}$-type}. 
\begin{definition} We say that two types $(J,\rho)$ and $(J',\rho')$ are $G$-associate and write it as $(J,\rho) \approx_G (J',\rho')$ if ${\fkS(J,\rho)}={\fkS(J',\rho')}$. The $G$-association class of a type $(J,\rho)$ will be denoted $[J,\rho]_G$. 
\end{definition}
\begin{remark}
    We will see later another  equivalence relation of (weakly cuspidal) types which will be denoted $\cong_G$ (see Definition \ref{def:wct}).
\end{remark}

\section{Some Clifford theory}
 In this section, we collect some facts from Clifford theory that will be used later in the work.  For the remainder of this paper, we will use the following notation.  
{For a group $H$ let $X(H) :=\Hom(H, \BC^\times)$. If $H'$ is a normal subgroup of $H$, we will abuse notation and identify $X(H/H')$ with the subgroup of $$\{\chi\in X(H)\,\colon \ker(\chi) \supset H'\}.$$}
If $H\subset G$ and $\sigma$ is a representation of $H$, then for any $g\in G$, ${}^g \sigma$ denote the representation of ${}^g H= g H g^{-1}$ defined by ${}^g\sigma(h)=\sigma(ghg^{-1})$, for all $h\in H$.

\subsection{$\flat$-world}\label{def:flat}
Let ${}^\flat G \trianglelefteq G$ of finite index in ${}^0G$ containing the commutator subgroup of $G^{\text{der}}$ of $G(F)$ and some open subgroup. 
So obviously, ${{}^\flat} G$ is open. In the latter part of the article, we will restrict to the case ${}^\flat G = {}^0 G$. But for instance,  one can take ${}^\flat G=\ker(\kappa_G)$ the kernel of the Kottwitz map. 
 
\subsection{} Since ${}^\flat GZ$ is a normal subgroup of finite index in $G$ and since for the representations considered below, $Z$ acts via a character, we have the following as in usual Clifford theory (cf. \cite{Cl37}).

\begin{lemma}\label{GenClif}
Let $\tilde H$ be any open subgroup of $G$ and set $H=\tilde H \cap {}^\flat G$. 
Let $\tilde\sigma$ be a semi-simple of finite length representation of $\tilde H$. 
\begin{enumerate}
\item $\Res_{ H}^{\tilde H}(\tilde\sigma)$ is semi-simple of finite length so if ${ H}$ is compact then $\dim_\BC(\tilde \sigma)< \infty$.
\item[]Assume from now on that $\tilde\sigma$ is \textbf{irreducible}: 
\item Denote by $\CO_{H}(\tilde \sigma)$ the set of all $\sigma \in {\Irr}(H)$ which are isomorphic to a sub-representation of $\Res_{ H}^{\tilde H}(\tilde\sigma)$. Write ${\Int}_{\widetilde{H}}(\sigma)=\{g\in \widetilde{H}\;|\;   \sigma \simeq  {}^g\sigma\} $ for the inertia group of $\sigma$ in $\widetilde{H}$. We have $\CO_{H}(\tilde \sigma) \simeq \tilde H/{\Int}_{\widetilde{H}}(\sigma)$.
\item Let $\sigma \in {\Irr}({ H})$ contained in $\Res_{ H}^{\tilde H}(\tilde\sigma)$. 
There exists a positive integer $m_{{ H}}(\widetilde{\sigma})$ such that
$$\Res_{{ H}}^{\widetilde{H}}(\widetilde{\sigma})\simeq m_{{ H}}(\widetilde{\sigma}) \cdot(\bigoplus_{g \in \tilde{H}/{\Int}_{\widetilde{H}}(\sigma)} {}^g\sigma)$$
and $\{{}^g\sigma \colon g \in \widetilde{H}/{\Int}_{\widetilde{H}}(\sigma)  \}$ are all nonisomorphic conjugates of $\sigma$. 
In particular, if $\dim_\BC(\sigma)<\infty$ then 
$$\dim_\BC(\tilde\sigma)=|\widetilde{H}/{\Int}_{\widetilde{H}}(\sigma)|\cdot m_{ H}(\tilde\sigma)\cdot \dim_\BC(\sigma).$$
\item Let $\widehat{\sigma}$ be the sum of all subrepresentations of $\Res_{{ H}}^{\widetilde{H}}(\widetilde\sigma)$ that are isomorphic to $\sigma$. 
Then $\widehat{\sigma}$ is an irreducible representation for ${\Int}_{\widetilde{H}}(\sigma) $ and 
$$\Res_{ H}^{{\Int}_{\widetilde{H}}(\sigma) }(\widehat{\sigma}) \simeq m_{{ H}}(\widetilde{\sigma})  \cdot \sigma \quad  \text{ and } \quad \widetilde{\sigma} \simeq \Ind_{{\Int}_{\widetilde{H}}(\sigma) }^{\widetilde{H}}(\widehat{\sigma}).$$
\item For any $\tilde\sigma'\in {\Irr}(\tilde H)$, the following properties are equivalent:
\begin{itemize}  
		\item [ i)] $\tilde\sigma' \simeq \chi\otimes \tilde\sigma$ for some character $\chi \in X(\tilde H/{ H})$; 
		\item [ ii)] $\Res_{ H}^{\tilde H}(\tilde\sigma') \simeq \Res_H^{\tilde H}(\tilde\sigma)$; 
		\item[ iii)] $\CO_{ H}(\tilde \sigma)=\CO_{ H}(\tilde \sigma')$;
		\item [ iv)] $\CO_{ H}(\tilde \sigma)$ and $\CO_{ H}(\tilde \sigma')$ has a direct irreducible factor that are isomorphic.
\end{itemize}

\end{enumerate}
\end{lemma}
\subsubsection{}

Let ${}^\dag H:= \cap_{\chi \in X_{\tilde{H}}(\tilde \sigma)} \ker (\chi)$ where $X_{\tilde H}(\tilde \sigma):=\{\chi \in X(\widetilde{H}/H)\colon \widetilde{\sigma}\otimes\chi \simeq \widetilde{\sigma}\}$. 
 This  group contains the center $Z_{\tilde H}$ of $\tilde H$ and $H$, hence it is an open normal finite index subgroup of $\tilde{H}$. 
 We choose an irreducible ${ H}$-subspace $W$ of $\tilde{\sigma}$ whose $\tilde{H}$-stabilizer ${}^s H$ is maximal. We let $^s \sigma$ denote the natural representation of ${}^s H$ on $ W$. 
 Replacing by a $\tilde{H}$-conjugate, we can assume that $\Res_{ H}^{{}^s H}({}^s \sigma)= \sigma$. 
 
\begin{lemma}\label{clifbis}
Using the previous notation, we have the following properties.
\begin{enumerate}
\item $\Res^{\tH}_{{}^s H}\tilde{\sigma}\simeq \bigoplus_{h \in \tilde H/{}^s H} ({}^s \sigma)^h$.
 \item We have ${}^\dag H \le {}^s H \le \Int_{\tilde{H}}( \sigma) $ and $[ {\Int}_{\tilde{H}}( \sigma):{}^s H]=[{}^s H:{}^\dag H]=m_{ H}(\tilde \sigma)$. 
 \item $\tilde \sigma \simeq \ind_{{}^s H}^{\tilde H}({}^s \sigma)$.
 \item The representation ${}^\dag \sigma:= \Res_{{}^\dag H}^{^s H}(^s \sigma)$ is the unique irreducible representation which occurs in $\Res_{{}^\dag H}^{\tilde H}(\tilde{\sigma})$ and satisfies $\Res_{ H}^{{}^\dag H}({}^\dag \sigma)= \sigma$. 
 \item $ \ind_{{}^\dag H}^{\tilde H}({}^\dag \sigma) \simeq m_{ H}(\tilde{\sigma}) \cdot \tilde \sigma $. 
 \item $X_{\tilde H}(\tilde \sigma) \simeq X(\tilde{H}/{}^\dag H)$. 
 \end{enumerate}
 \end{lemma}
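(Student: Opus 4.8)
The plan is to prove the six assertions of Lemma~\ref{clifbis} in sequence, leaning on Lemma~\ref{GenClif} for the structural input and using the maximality of the stabilizer ${}^sH$ at the crucial points. Throughout, set $m = m_H(\tilde\sigma)$ and $I = \Int_{\tilde H}(\sigma)$ for brevity, and recall that by construction $W \subset \tilde\sigma$ is an irreducible $H$-subspace with $\Res_H^{{}^sH}({}^s\sigma) = \sigma$ and ${}^sH$ the $\tilde H$-stabilizer of $W$ (not just of the isomorphism class $\sigma$, which is $I$), chosen maximal among stabilizers of such subspaces.

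First I would prove (1) and (3) together. The $\tilde H$-translates $h\cdot W$, $h \in \tilde H/{}^sH$, are $H$-subspaces of $\tilde\sigma$ on which $H$ acts by $\sigma^h$; since $\tilde\sigma$ is irreducible, their sum is all of $\tilde\sigma$, and a standard argument (the sum of a transitively permuted family of subspaces, each minimal, is either direct or has forced coincidences controlled by the stabilizer) shows that by maximality of ${}^sH$ the sum over coset representatives is in fact direct — any further collapse would enlarge the stabilizer of some translate. This gives (1), and Frobenius reciprocity / the universal property of induction then identifies $\tilde\sigma$ with $\ind_{{}^sH}^{\tilde H}({}^s\sigma)$, which is (3). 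For (2): the inclusion ${}^sH \le I$ is immediate since stabilizing $W$ forces stabilizing its class $\sigma$; the inclusion ${}^\dag H \le {}^sH$ follows because on ${}^\dag H$ every character in $X_{\tilde H}(\tilde\sigma)$ is trivial, forcing (via part (4) of Lemma~\ref{GenClif}, i.e. the equivalence (i)$\Leftrightarrow$(ii) relating $\chi\otimes\tilde\sigma\cong\tilde\sigma$ to the restriction) the restriction of $\tilde\sigma$ to ${}^\dag H$ to be $\sigma$-isotypic, so that the $H$-subspace $W$ is already ${}^\dag H$-stable. The index computation $[I : {}^sH] = [{}^sH : {}^\dag H] = m$ I would get by comparing dimensions in the decompositions: from Lemma~\ref{GenClif}(2), $\dim\tilde\sigma = |\tilde H/I|\cdot m\cdot\dim\sigma$, while from (1) here $\dim\tilde\sigma = |\tilde H/{}^sH|\cdot\dim\sigma$, giving $[I:{}^sH] = m$; and then $[{}^sH:{}^\dag H] = m$ follows by a parallel count inside $\ind_{{}^\dag H}^{\tilde H}({}^\dag\sigma)$ using part (5), which I prove next.

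Then (4): define ${}^\dag\sigma = \Res_{{}^\dag H}^{{}^sH}({}^s\sigma)$; by the previous paragraph $W$ is ${}^\dag H$-stable and ${}^\dag\sigma$ restricts to $\sigma$ on $H$, so existence is clear. Irreducibility of ${}^\dag\sigma$ and its uniqueness among constituents of $\Res_{{}^\dag H}^{\tilde H}(\tilde\sigma)$ restricting to $\sigma$: here I would invoke that $\tilde H/{}^\dag H$ is abelian (it is $X(\tilde H/{}^\dag H)$'s dual), so Clifford theory over the abelian quotient ${}^\dag H \trianglelefteq \tilde H$ together with $X_{\tilde H}(\tilde\sigma) \cong X(\tilde H/{}^\dag H)$ — which is exactly (6), proved independently below — forces $\Res_{{}^\dag H}^{\tilde H}(\tilde\sigma)$ to be multiplicity-free as a sum of a single $\tilde H$-orbit, pinning down ${}^\dag\sigma$ uniquely. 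Part (5), $\ind_{{}^\dag H}^{\tilde H}({}^\dag\sigma) \cong m\cdot\tilde\sigma$, is then the push-pull / Mackey statement: $\ind_{{}^\dag H}^{\tilde H}\Res_{{}^\dag H}^{{}^sH}({}^s\sigma) = \ind_{{}^sH}^{\tilde H}\big(\ind_{{}^\dag H}^{{}^sH}{}^\dag\sigma\big)$, and $\ind_{{}^\dag H}^{{}^sH}\Res({}^s\sigma) \cong m\cdot{}^s\sigma$ because $[{}^sH:{}^\dag H]=m$ and ${}^s\sigma$ is, by maximality of ${}^sH$, invariant under the abelian group ${}^sH/{}^\dag H$ up to the characters killed in $X_{\tilde H}(\tilde\sigma)$; combining with (3) gives the claim. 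Finally (6): the map $X_{\tilde H}(\tilde\sigma) \to X(\tilde H/{}^\dag H)$ is well-defined since any $\nu$ with $\tilde\sigma\otimes\nu\cong\tilde\sigma$ is trivial on ${}^\dag H$ by definition of the latter; it is obviously injective; surjectivity is the substantive point — one must show every character of $\tilde H/{}^\dag H$ fixes $\tilde\sigma$, which follows by counting, since both $X(\tilde H/{}^\dag H)$ and $X_{\tilde H}(\tilde\sigma)$ have order $[\tilde H : {}^\dag H]/[\tilde H : \text{(stabilizer)}]$ and the already-established index relations from (2) force equality of cardinalities.

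The main obstacle I anticipate is the interlocking of parts (4), (5) and (6): the cleanest proof of (6)'s surjectivity wants the multiplicity-freeness that (4) supplies, while (4)'s uniqueness statement is most naturally deduced from (6). The way I would break the circularity is to prove the index identities in (2) first \emph{purely from dimension counts} using Lemma~\ref{GenClif}(2) and part (1) — these do not need (4) or (6) — and then derive $|X_{\tilde H}(\tilde\sigma)| = [\tilde H:{}^\dag H]/[\tilde H:I]\cdot[I:{}^sH]\cdot\dots$ hold on, more carefully: establish $|X_{\tilde H}(\tilde\sigma)| = [\text{something computable from }m\text{ and }|\tilde H/I|]$ directly from Lemma~\ref{GenClif}(2)(4), compare with $[\tilde H:{}^\dag H] = [\tilde H:I][I:{}^sH][{}^sH:{}^\dag H] = [\tilde H:I]\cdot m\cdot m$, and check these agree, which then yields (6) by the injection being a bijection on finite sets of equal size; with (6) in hand, (4) and (5) follow formally. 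A secondary annoyance is keeping straight the three nested groups ${}^\dag H \le {}^sH \le I \le \tilde H$ and the distinction between stabilizing the \emph{space} $W$ versus the \emph{class} $\sigma$ — getting that bookkeeping right is most of the work.
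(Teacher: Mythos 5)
The paper does not actually prove this lemma: it cites it as \cite[Lemma~8.3]{BH03} (in the case $\tilde H = G$, $H = {}^0G$), with a detailed proof in \cite[Lemma~1.6.3.1]{Roc09}, and asserts that the same proof applies verbatim. So your task was to reconstruct the Bushnell--Henniart/Roche argument. Your write-up gets the general architecture (translates of $W$, two dimension counts, the $\dag$-group as an intersection of kernels), but there are several concrete gaps that I do not think can be filled with the reasoning you give.

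The central missing ingredient is the Heisenberg/symplectic structure that drives the whole lemma. Writing $I=\Int_{\tilde H}(\sigma)$ and $\hat\sigma$ for the irreducible $I$-summand of $\tilde\sigma|_I$ lying over $\sigma$ (part (3) of Lemma~\ref{GenClif}), the action of the finite abelian group $I/H$ on $\End_H(\hat\sigma)\cong M_m(\BC)$ produces a bimultiplicative alternating pairing on $I/H$ with values in $\BC^\times$. The key facts are that ${}^\dag H/H$ is exactly its radical, ${}^sH/H$ is a maximal isotropic subgroup, and $m^2 = [I:{}^\dag H]$; this is where the identities $[I:{}^sH]=[{}^sH:{}^\dag H]=m$ come from. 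Your argument for directness in (1) --- ``any further collapse would enlarge the stabilizer of some translate'' --- does not establish this: distinct lines in $\BC^m$ can certainly be linearly dependent without coinciding, and ruling that out here requires knowing that the stabilizer of any $H$-irreducible line inside $\hat\sigma$ is isotropic, so that maximality of ${}^sH$ forces $[I:{}^sH]=m$, which then makes the dimension count close. Without that input, (1) is unproven, and your derivation of the indices in (2) (which you say comes ``purely from dimension counts'' using (1)) inherits the gap. Your claim that ${}^\dag H\le{}^sH$ follows from Lemma~\ref{GenClif}(4) is also not right: part (4) of that lemma says that any twist of $\tilde\sigma$ by a character of $\tilde H/H$ has the same restriction to $H$, which is tautological and says nothing about $\sigma$-isotypy on ${}^\dag H$. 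The correct reason is that ${}^\dag H/H$ is the radical of the pairing and therefore is contained in every isotropic subgroup, in particular in ${}^sH/H$.

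Two secondary points. In (4)/(5) you assert multiplicity-freeness of $\Res_{{}^\dag H}^{\tilde H}\tilde\sigma$ and that $\ind_{{}^\dag H}^{{}^sH}{}^\dag\sigma\cong m\cdot{}^s\sigma$. Both are false when $m>1$: in fact $\Res_{{}^\dag H}\hat\sigma = m\cdot{}^\dag\sigma$ (since ${}^\dag H/H$ acts by scalars on $\hat\sigma$), and $\ind_{{}^\dag H}^{{}^sH}{}^\dag\sigma \cong \bigoplus_{\chi\in X({}^sH/{}^\dag H)}{}^s\sigma\otimes\chi$, which is a sum of $m$ pairwise non-isomorphic representations, not $m$ copies of ${}^s\sigma$. (Coincidentally each $\ind_{{}^sH}^{\tilde H}({}^s\sigma\otimes\chi)\cong\tilde\sigma$, so you land on the right statement of (5) from a wrong intermediate step.) The uniqueness in (4) comes not from multiplicity-freeness but from the fact that ${}^\dag H/H$ acts by scalars on the $\sigma$-isotypic component. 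Conversely, you treat (6) as ``the substantive point,'' but it is actually immediate: $\tilde H/{}^\dag H$ is a finite abelian group and ${}^\dag H/H$ is by definition the annihilator of $X_{\tilde H}(\tilde\sigma)\subset X(\tilde H/H)$, so $X(\tilde H/{}^\dag H) = X_{\tilde H}(\tilde\sigma)$ by double annihilation (Pontryagin duality). Putting it all together: the conclusions are right, but the load-bearing step --- the symplectic structure on $I/{}^\dag H$ with ${}^sH/{}^\dag H$ Lagrangian --- is what the cited sources actually prove, and it is precisely what your argument is missing.
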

\begin{proof} This is \cite[Lemma 8.3]{BH03} when $\tH=G$ and $ H = {}^\flat G= {}^0G$, whose detailed proof can be found in \cite[Lemma 1.6.3.1]{Roc09}. The exact same proof works out in this more general set up.
\end{proof}

\section{Some results on Hecke algebras of types} \label{sometypessec}

\begin{definition}\label{pairsCJ}
Define $\CJ_G$ to be the set of pairs $(J,\rho)$ formed by $ J\in \Omega(G)$ and an irreducible representation $\rho$ of it, such that:
\begin{itemize}
    \item There exists $\tilde J \in \Omega(G/Z)$ such that $J=\tilde J \cap {}^\flat G$ and  $\tilde \rho \in \Irr(\tilde J)$ containing $\rho$,
    \item $\CI_G(\widetilde\rho) = \tJ$, or equivalently $\pi= \ind_{\tJ}^G (\tilde\rho)$ is irreducible (hence supercuspidal).
\end{itemize}
\end{definition} 
For any $(J,\rho) \in \CJ_G$, fix $\tilde J$ as above, set ${{}^0 J}=\tilde J \cap {}^0G$ (the unique maximal compact subgroup of $\tilde J$) and fix a representation ${}^0 \rho \in \Irr({{}^0 J})$ that is contained in $\tilde \rho$ and contains $\rho$. 
We will also consider the pair $({}^\sharp J:=\cap_{\chi \in X_{{}^0 J}({}^0 \rho)} \ker (\chi), {}^\sharp \rho)$, this is the pair $({}^\dag H,{}^\dag \sigma)$ attached to $(\tilde H,\tilde \sigma) =({}^0 J,{}^0 \rho)$ in Lemma \ref{clifbis}. 
Let ${}^\natural J :={}^\dag({}^\flat G) \cap {}^0 J$. For any $ \nu\in X({}^\natural J/J)$, we denote an extension of it to $X(G/{}^\flat G)$ as $\bar \nu$ and let $\tilde \nu = \Res_{\tilde J}^G(\bar \nu)$ and ${}^0 \nu = \Res_{{}^0 J}^G(\bar \nu)$. 

We claim that  ${}^\natural J \subset {}^\sharp J$. 
To see this, let $x \in {}^\natural J$. Then $\bar \chi(x) = 1$ for all $\bar \chi \in X_{G}(\pi)$. Let ${}^0 \nu \in X_{{}^0 J }({}^0 \rho )$ and $\nu =\Res_{{}^\natural J}^{{}^0 J} {}^0 \nu$.  Then for some extensions $\bar \nu$ (and $\tilde\nu$) of $\nu$, we have $\tilde \nu  \in X_{\tilde J }(\tilde\rho )$ and $\bar\nu\in X_{G}(\pi)$. In particular, for every ${}^0 \nu \in X_{{}^0 J }({}^0 \rho )$, $\bar \nu(x) = 1$.  This implies that ${}^0\nu(x) = 1$ for every ${}^0 \nu \in X_{{}^0 J }({}^0 \rho )$ which  implies that $x \in {}^\sharp J$.

Now consider the pair $({}^\natural J, {}^\natural \rho:=\Res_{{}^\natural J}^{{}^\sharp J}({}^\sharp \rho))$. Since $\Res_{J}^{{}^\sharp J}({}^\sharp \rho) = \rho$, we see that ${}^\natural \rho$ is irreducible. We adapt the case ${}^\flat G={}^0 G$ treated in \cite[Proposition 5.4]{BK98} in the following way:

\begin{theorem}\label{genpro54}
Let $(J,\rho)\in \CJ_G$ and $(\tJ,\tilde \rho)$ as in Definition \ref{pairsCJ}. 
\begin{enumerate}
\item For any $\nu \in X({\,{}^\natural J}/J)$ the pair $({{}^0 J},{}^0 \rho \otimes {}^0 \nu)$ is a $[G,\pi\otimes \bar\nu]_G$-type. 
\item For any $\nu \in X({\,{}^\natural J}/J)$ we have
\[({{}^0 J},{}^0 \rho \otimes {}^0 \nu) \approx_G ({\,{}^\natural J},{}^\natural \rho\otimes \nu),\]
that is, the pair $({\,{}^\natural J},{}^\natural \rho\otimes \nu) $ is also a $[G,\pi\otimes \bar\nu]_G$-type. 
\item 
The pair $(J,\rho)$ is a type in $G$ such that 
{\[{\fkS(J,\rho)}=\bigsqcup_{\,{} \nu \in X({\,{}^\natural J}/J)}{\fkS({{}^0 J},{}^0 \rho\otimes {}^0 \nu)}= \{[G,\pi\otimes \bar\nu]_G \colon \, \nu \in X({\,{}^\natural J}/J)\} ,\]}
\end{enumerate}
\end{theorem}
\begin{proof}
(1) Follows from \cite[Proposition 5.4]{BK98}. 

(2)
Let $ \nu \in X({}^\natural J/J)$ and fix an extension $\bar \chi \in X(G/{}^\dag({}^\flat G))$ for each $\chi\in  X({\,{}^\sharp J}/{}^\natural J) $. 
By definition of ${}^\dag({}^\flat G)$ we have 
$$\pi  \otimes \bar \nu \simeq \pi \otimes \bar \nu \otimes \bar \chi, \quad \forall \chi \in X({\,{}^\sharp J}/{}^\natural J).$$
This implies that $[G,\pi\otimes  \bar \nu]_G=[G,\pi\otimes  \bar \nu \otimes \bar \chi]_G$ for any $\chi \in X({\,{}^\sharp J}/{}^\natural J)$. Using (1), this implies that $({}^0 J, {}^0 \rho \otimes {}^0 \nu \otimes {}^0 \chi)$ is also a $[G,\pi\otimes  \bar \nu]_G$-type for each $ \chi \in X({}^\sharp J/ {}^\natural J)$.

Using Frobenius reciprocity, Lemma \ref{GenClif} and Lemma \ref{clifbis} we have any $\sigma \in \Irr(G)$:
\begin{align*}
\Hom_{{}^\natural J}({}^\natural \rho\otimes \nu,\Res_{{}^\natural J}^G(\sigma))
 &\simeq  \Hom_{{\,{}^\sharp J}}(\ind_{{}^\natural J}^{{\,{}^\sharp J}}({}^\natural \rho\otimes  \nu),\Res_{{\,{}^\sharp J}}^G(\sigma))\\
&\simeq  \Hom_{{\,{}^\sharp J}}({}^\sharp \rho\otimes\nu \otimes \BC[{\,{}^\sharp J}/{}^ \natural J],\Res_{{\,{}^\sharp J}}^G(\sigma)) \\
&\simeq  \bigoplus_{\chi \in X({\,{}^\sharp J}/{}^\natural J)}\Hom_{{\,{}^\sharp J}}({}^\sharp \rho\otimes \nu \otimes \chi,\Res_{{\,{}^\sharp J}}^G(\sigma))\\
&\simeq 
\bigoplus_{\chi \in X({\,{}^\sharp J}/{}^\natural J)}m_J({}^0 \rho)\Hom_{{\,{}^0 J}}({}^0 \rho\otimes {}^0 \nu \otimes {}^0 \chi,\Res_{{\,{}^0 J}}^G(\sigma))
\end{align*}
In the last isomorphism above, we have used that $\ind_{{}^\sharp J}^{{}^0 J} {}^\sharp \rho\otimes \nu \otimes \chi \cong m_J({}^0 \rho) \cdot  ({}^0\rho \otimes {}^0\nu \otimes {}^0 \chi)$ and Frobenius reciprocity. 
Therefore, $ \Hom_{{}^\natural J}({}^\natural \rho\otimes \nu,\Res_{{}^\natural J}^G(\sigma))\neq 0$  if and only if $\sigma$ belongs to $\fkR^{[G,\pi \otimes \bar \nu \otimes \bar \chi]}$ for some $\chi \in X({}^\sharp J/{}^\natural J)$ but this latter is just $\fkR^{[G,\pi \otimes \bar \nu]}$.

(3) Using Frobenius reciprocity again, we have
\begin{align*}
\Hom_J(\rho,\Res_J^G(\sigma)) &\simeq  \Hom_{{\,{}^\natural J}}(\ind_J^{{\,{}^\natural J}}(\rho),\Res_{{\,{}^\natural J}}^G(\sigma))\\
&\simeq  \bigoplus_{\nu \in X({\,{}^\natural J}/J)}\Hom_{{\,{}^\natural J}}({}^\natural \rho\otimes \nu,\Res_{{\,{}^\natural J}}^G(\sigma)).
\end{align*}
This shows that 
\[{\fkS(J,\rho)}= \bigcup_{ \,\nu \in X({\,{}^\natural J}/J)}{\fkS({{}^\natural J},{}^\natural \rho\otimes \nu)}  = \bigcup_{ \,\nu \in X({\,{}^\natural J}/J)}{\fkS({{}^0 J},{}^0 \rho\otimes {}^0 \nu)}.\] 
Note that for any $\nu,\nu' \in X({\,{}^\natural J}/J)$ we have
\[ [G,\pi\otimes \bar\nu]_G = [G,\pi\otimes \bar\nu']_G \implies \Res_{{}^\dag ({}^\flat G) \cap {}^0 G}^G(\bar\nu^{-1} \bar\nu')  \text{ is trivial }\implies  \nu= \nu'.\]
This shows 
\[{\fkS(J,\rho)}= \bigsqcup_{ \,\nu \in X({\,{}^\natural J}/J)}{\fkS({{}^0 J},{}^0 \rho\otimes {}^0 \nu)}.\] 
This concludes the proof of (3). 
\end{proof}
\begin{corollary}\label{indquotients}  
 Let $(J,\rho)\in \CJ_G$.  
Any irreducible sub-quotient of $\ind_J^G(\rho)$ is isomorphic to $\pi \otimes \chi$ for some $\chi \in X(G/{}^\flat G)$. 

\end{corollary}
\begin{proof}
We have already seen in Theorem \ref{genpro54} that $(J,\rho)$ is a type. Since $ \ind_J^G(\rho)$ is generated by its $\rho$-isotypic subspace, $ \ind_J^G(\rho)\in \fkR_\rho$. Hence, for any irreducible sub-quotient  $\sigma$ of $\ind_J^G(\rho)$ we have $\Hom_G( \ind_J^G(\rho), \sigma)\neq 0$, that is $\fkI(\sigma) \in {\fkS(J,\rho)} $. 
Accordingly, we have $\fkI(\sigma) =[G,\pi\otimes \bar{\nu}]_G$ for some $ \nu \in X({\,{}^\natural J}/J)$, and so there exists $g\in G$ such that $\sigma= \pi^g\otimes  \bar{\nu}\chi\simeq \pi \otimes  \bar{\nu}\chi$ for some $\chi \in X(G/{}^0 G)$.
\end{proof}

\begin{remark}\label{tjsubig}
Let $ (J , \rho) \in \CJ_G$. 
By Schur's lemma 
we have an equality $\mathcal{I}_G(\rho)\cap \tJ ={\Int}_{\tJ}(\rho)$. 
Hence, since $\tilde \rho \simeq \Ind_{{\Int}_{\tJ}(\rho) }^{\tJ}(\widehat{\rho})$ where $\widehat{\rho} $ is the irreducible representation defined in (4) Lemma  \ref{GenClif}, we may replace the pair $(\tJ,\tilde \rho)$ by $({\Int}_{\tJ}(\rho),\widehat{\rho})$. 
Accordingly, for any pair $ (J , \rho) \in \CJ_G$ we may assume that $\tJ \subset \CI_G(\rho)$. 
\end{remark}


\begin{lemma}\label{IGtilde}
Let $\tilde J \in \Omega(G/Z)$, $J=\tilde J \cap {}^\flat G$ and $\tilde \rho \in {\Irr}(\tilde J)$.  
For any $\rho\in \CO_J(\tilde \rho)$, we have $\CI_G(\widetilde{\rho})\subset \widetilde{J}\CI_G({\rho})\widetilde{J}$. 
In particular, 
\[\mathcal{I}_G(\rho) \subset \tilde J \Rightarrow\CI_G(\tilde\rho) = \tilde J\text{ i.e. }(J,\rho)\in \CJ_G.\]
\end{lemma}
\begin{proof}
If $g \in G$ satisfies $ \Hom_{\tilde J \cap \tilde J^g}(\tilde \rho, \tilde \rho^g)\neq 0$, then $ \Hom_{ J \cap  J^g}(\Res_J^{\tilde J}(\tilde \rho), \Res_J^{\tilde J} (\tilde \rho)^g)\neq 0.$ 
Using Mackey and Clifford theories we deduce that there exists $h,h'\in \tilde J$ such that $h'gh^{-1} \in \mathcal{I}_G(\rho)$, which shows the desired equality. 
\end{proof}
\begin{definition}\label{CWCT} A pair $(J,\rho)\in \CJ_G$ (and its class $[J,\rho]_G$) will be called \textit{weakly cuspidal} if 
(i) $ \mathcal{I}_G(\rho) \subset \tJ$, 
and \textit{cuspidal} if in addition 
(ii) $ m_J(\tilde\rho)=1$. 
Write $\CJ_G^{wc}$ for the set of weakly cuspidal types. 
\end{definition}

\begin{remark}\label{tjeqig}
In view of Remark \ref{tjsubig} we may assume that if $(J, \rho)$ is weakly cuspidal, then $\tJ= \mathcal{I}_G(\rho)$ and $\Res_J^{\tJ}(\tilde \rho)=m_J(\tilde \rho) \cdot \rho$. 
\end{remark}

In the case ${}^\flat G  = {}^0 G$, it is shown in \cite[Proposition 5.6]{BK98} that the Hecke algebra $\CH(G, \rho)$ is commutative for cuspidal types $(J, \rho)$. As noted in \cite[\S 5.5]{BK98}, (i) and (ii) are satisfied when $G = {}^0G$. 
The existence of a type that satisfies (i) and (ii) is known when $G = \GL_n$ or its inner forms and when $G$ is a classical group provided the residue characteristic of $F$ is odd (see \cite{BK93, MS14}).

\subsection{Isomorphism of Hecke algebras} 
\begin{lemma}\label{heckeisintertwining}
Let $(J,\rho)\in \CJ_G^{wc}$.  
We have an isomorphism of algebras $\CH(G,\rho) = \CH(\mathcal{I}_G(\rho) , \rho) $. 
\end{lemma}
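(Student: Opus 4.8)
The plan is to prove this by showing that the natural inclusion $\CI_G(\rho) \hookrightarrow G$ induces an algebra isomorphism between the two Hecke algebras, via the standard description of Hecke algebras of types as convolution algebras supported on the intertwining set. Recall that $\CH(G,\rho) = \End_G(\ind_J^G \rho)$ can be identified with the space of functions $\Phi\colon G \to \End_\BC(V_\rho)$ that are compactly supported mod $J$ on both sides and satisfy $\Phi(j_1 g j_2) = \rho(j_1)\Phi(g)\rho(j_2)$, with multiplication given by convolution. The fundamental point is that any such $\Phi$ is supported on the set of $g$ with $\Phi(g) \neq 0$, and $\Phi(g)$ intertwines $\rho^g$ with $\rho$ as $J^g \cap J$-representations; hence $\supp \Phi \subseteq \CI_G(\rho)$. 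This is where the hypothesis enters in an essential way.

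First I would record that $\CI_G(\rho)$ is a group: it is clearly stable under inversion, and closure under multiplication follows because $\CI_G(\rho)$ is by definition the support of the nonzero Hecke operators, which is closed under convolution (alternatively, one checks directly that $\Hom_{J\cap J^g}(\rho, \rho^g) \neq 0$ and $\Hom_{J \cap J^h}(\rho,\rho^h)\neq 0$ force $\Hom_{J\cap J^{gh}}(\rho,\rho^{gh}) \neq 0$, using that $J$ is compact so the relevant spaces are finite-dimensional). Moreover $\CI_G(\rho)$ contains $J$ and is open (each double coset $JgJ$ in the support is open), hence $J$ has finite index in any compact-open piece and $\CI_G(\rho)$ is an open subgroup of $G$ containing $J$; in particular $(J,\rho)$ makes sense as a pair inside $\CI_G(\rho)$ and $\CH(\CI_G(\rho),\rho) = \End_{\CI_G(\rho)}(\ind_J^{\CI_G(\rho)}\rho)$ is defined.

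Next, I would set up the isomorphism explicitly at the level of convolution algebras. Let $\CA = \CH(G,\rho)$ be realized as the space of $\rho$-bi-equivariant compactly-supported-mod-$J$ functions $G \to \End_\BC(V_\rho)$, and $\CB = \CH(\CI_G(\rho),\rho)$ the analogous space for $\CI_G(\rho)$. Extension by zero gives a linear map $\CB \to \CA$ (a function on $\CI_G(\rho)$ extends to $G$ since $\CI_G(\rho)$ is open and closed in the relevant topology on double cosets), and restriction gives a map $\CA \to \CB$. The support argument above shows every element of $\CA$ is supported in $\CI_G(\rho)$, so these maps are mutually inverse bijections; that they are algebra homomorphisms follows because the convolution $(\Phi_1 * \Phi_2)(g) = \sum_{x \in J\backslash G} \Phi_1(x)\Phi_2(x^{-1}g)$ only receives contributions from $x, x^{-1}g \in \CI_G(\rho)$, i.e. the convolution computed in $G$ agrees with that computed in $\CI_G(\rho)$. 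This yields the desired algebra isomorphism $\CH(G,\rho) \simeq \CH(\CI_G(\rho),\rho)$.

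The main obstacle, and the only genuinely nontrivial input, is the claim that $\supp\Phi \subseteq \CI_G(\rho)$ for every $\Phi \in \CH(G,\rho)$, together with the compatibility of the two convolution products under extension by zero — both rest on the finiteness coming from compactness of $J$ and from the structure theory of $\ind_J^G\rho$ as expounded in \cite{BK98}. One has to be slightly careful that "supported mod $J$ on both sides with finite support in $J\backslash G/J$" is preserved: a function in $\CB$ has finite support in $J\backslash \CI_G(\rho)/J$, and since $\CI_G(\rho)$ is open this is a legitimate finite-support element of $J\backslash G/J$ after extension by zero. I expect the bookkeeping of these finiteness and openness points to be the bulk of the work, while the algebraic identities are formal once the support constraint is in hand; I would cite \cite[Section 4]{BK98} for the dictionary between $\CH(G,\rho)$ and the convolution-algebra model and for the intertwining description of its support.
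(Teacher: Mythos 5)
Your main argument is essentially the same as the paper's: the paper realizes $\CH(G,\rho) = \End_G(\ind_J^G\rho)$ and uses the Mackey decomposition $\End_G(\ind_J^G\rho) \simeq \bigoplus_{g\in J\backslash G/J}\Hom_{J\cap J^g}(\rho,\rho^g)$ to see the support is $\CI_G(\rho)$, then checks the induction map $\End_{\CI_G(\rho)}(\ind_J^{\CI_G(\rho)}\rho)\to\End_G(\ind_J^G\rho)$ is an algebra map; your convolution-algebra model with extension-by-zero is the same bookkeeping in a different presentation.

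However, there is a genuine gap in your opening paragraph, where you try to \emph{prove} that $\CI_G(\rho)$ is a group. Both of your arguments are incorrect. The direct claim that $\Hom_{J\cap J^g}(\rho,\rho^g)\neq 0$ and $\Hom_{J\cap J^h}(\rho,\rho^h)\neq 0$ together force $\Hom_{J\cap J^{gh}}(\rho,\rho^{gh})\neq 0$ is simply false in general; there is no such transitivity statement, regardless of compactness of $J$. The alternative argument via ``the support of nonzero Hecke operators is closed under convolution'' is also a non sequitur: if $\Phi_g$ is supported on $JgJ$ and $\Phi_h$ on $JhJ$, then $\supp(\Phi_g*\Phi_h)\subseteq JgJhJ$, and this set decomposes as a union of double cosets $JgkhJ$ with $k$ ranging over representatives of $(J\cap J^{g^{-1}})\backslash J/(J\cap J^h)$. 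The product may well be supported only on cosets other than $JghJ$ (or vanish), so ``every nonzero Hecke operator has support in $\CI_G(\rho)$'' — which is true and tautological — does not give $gh\in\CI_G(\rho)$. Intertwining sets are \emph{not} groups in general; that $\CI_G(\rho)$ is one is a nontrivial additional hypothesis in this part of the paper. What makes it hold in the intended applications is that later on the paper imposes $\CI_G(\rho)\subset\tJ$, and since $J=\tJ\cap{}^\flat G$ is normal in $\tJ$ (as ${}^\flat G\trianglelefteq G$) this forces $\CI_G(\rho)\subset N_G(J)$, whence $\CI_G(\rho)=\CI_G(\rho)\cap N_G(J)=\Int_{N_G(J)}(\rho)$, which is a group. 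You should take the fact that $\CI_G(\rho)$ is an open subgroup as part of the standing hypotheses rather than attempt to derive it from general principles; once that is granted, the rest of your argument is correct.
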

\begin{proof}
The proof given here is essentially the same as the one in \cite[Proposition 5.6]{BK98}. We write the details for completeness. 

Let $f \in \CH(\CI_G(\rho), \rho)$; we view elements on the Hecke algebra as functions as in \cite[Section 2.1]{BK98}. Define $\tilde f $ on $\CH(G, \rho)$ by setting 
$\tilde f(x) = 0$ if $x \notin \CI_G(\rho)$. The map $\Phi: \CH(\CI_G(\rho), \rho) \rightarrow \CH(G, \rho),\; f \rightarrow \tilde f$ is an algebra embedding. It remains to see that $\Phi$ is surjective. To prove this, it suffices to show that for $h\in \CH(G, \rho)$, the support of $h$ is contained in $\CI_G(\rho)$. However, $g \in G$ lies in the support of a function in $\CH(G, \rho)$ if and only if $g$ intertwines $\rho$ (see \cite[Section 2.1]{BK98}). This finishes the proof of the lemma. 
\end{proof}


\begin{lemma}\label{freeoverz}
Let $(J,\rho)\in \CJ_G^{wc}$. 
The Hecke algebra $\CH(G,\rho)$ is a free $\BC[{}^\dag J/J]$-module of rank $m_J(\tilde\rho)^2$. 
In particular, if $m_J(\widetilde{\rho})=1$ then $\CH(G,\rho) \simeq \BC[{}^\dag J/J]$ is commutative.
\end{lemma}
\begin{proof}

We have the following isomorphism of $\BC$-modules
\begin{align} 
\CH(G,\rho) &=\End_{{\CI_G(\rho)}}(\ind_J^{{\CI_G(\rho)}}(\rho)) \label{eq32}\\
&\simeq\Hom_{{}^\dag J}\left({}^\dag\rho\otimes \BC[{}^\dag J/J],\Res_{{}^\dag J}^{{\CI_G(\rho)}}\circ\,\ind_{ J}^{{\CI_G(\rho)}}( \rho)\right)\\
&\simeq \bigoplus_{j\in {\CI_G(\rho)}/{}^\dag J} \Hom_{{}^\dag J}\left({}^\dag\rho\otimes \BC[{}^\dag J/J],\ind_J^{{}^\dag J}(\rho^j)\right) \\
&\simeq m_J(\tilde\rho)^2 \Hom_{{}^\dag J}({}^\dag \rho\otimes \BC[{}^\dag J/J],{}^\dag\rho\otimes \BC[{}^\dag J/J]) 
\end{align}

The first isomorphism follows from Lemma \ref{heckeisintertwining}, the second and fourth from Lemma \ref{clifbis} and the third from Mackey formula. 


The map $\mu\colon\BC[{}^\dag J/J] \hookrightarrow  \End_{{}^\dag J}({}^\dag \rho \otimes_\BC \BC[{}^\dag J/J])$, that sends an element $\bar w$ to the endomorphism 
\[v \otimes \bar j \mapsto   v \otimes \bar j \bar w^{-1}, \quad \forall v \in {}^\dag \rho , \forall \bar j \in {}^\dag J/J,\]
yields an embedding of $\BC$-algebras 
\[\BC[{}^\dag J/J] \hookrightarrow  \End_{{}^\dag J}({}^\dag \rho \otimes_\BC \BC[{}^\dag J/J])).\] 
But since ${}^\dag \rho$ is irreducible and $J$ acts trivially on $\BC[{}^\dag J/J]$ this embedding is actually an isomorphism 
\[\End_{{}^\dag J}({}^\dag \rho \otimes_\BC \BC[{}^\dag J/J]))\simeq \BC[{}^\dag J/J] .\]
This shows that $\mu\colon \BC[{}^\dag J/J] \iso  \End_{{}^\dag J}({}^\dag \rho \otimes_\BC \BC[{}^\dag J/J])$ is an isomorphism.
The algebra $\BC[{}^\dag J/J]$ acts on $\End_{{\CI_G(\rho)}}(\ind_J^{{\CI_G(\rho)}}(\rho))$ as follows: 
\[w\cdot   \phi = \phi \circ \ind_{{}^\dag J}^{\CI_G(\rho)} (\mu(w)), \quad\forall w \in \BC[{}^\dag J/J], \, \forall \phi \in  \End_{{\CI_G(\rho)}}(\ind_J^{{\CI_G(\rho)}}(\rho)).\] 
Moreover, the isomorphisms in the equations below 
 \eqref{eq32} are all $\BC[{}^\dag J/J]$-equivariant. 
This proves the lemma.
\end{proof}
\begin{remark}
    When ${}^\flat G = {}^0 G$, the analogue of the result above for the Hecke algebra $\CH(G,{}^0 \pi)$ is proved in \cite[Proposition 1.6.3.2]{Roc09}. 
\end{remark}
\subsection{Multiplicites for types} 
 Let $(J,\rho) \in \CJ_G^{wc}$. 
In this subsection, we prove that $m_{{}^\flat G}(\pi) = m_{J}(\tilde\rho)$. 
We will deduce several consequences of this result in the subsequent subsections.
\subsubsection{}\label{322}
We recall \cite[\S 8.3]{Vigneras1996}: For any $K\in \Omega(G/Z)$ and any $\sigma \in {\Irr}(K) $, the following statements are equivalent
\begin{enumerate}[(i)]
\item $\ind_{K}^G (\sigma)$ is irreducible,
\item $\End_G (\ind_{K}^G (\sigma))=\BC$,
\item $\CI_G(\sigma)=K$,
\item $\sigma$ is not contained in $\ind_{K \cap K^g}^{K} \Res_{K \cap K^g}^{K^g} (\sigma^g)$ for any $g \not\in K$.
\end{enumerate} 

Consequently  
$ {}^\flat\tilde \pi:=\ind_{\tilde J}^{{}^\flat G \tilde J} (\tilde\rho)\in {\Irr}({}^\flat G \tilde J)
 \text{ and }{}^\flat \pi:=\ind_{ J}^{{}^\flat G } (\rho)\in {\Irr}({}^\flat G).$ 
For any $g \in G$, we have  
\begin{align*}
\Hom_{{}^\flat G}({{}^\flat\pi},{{}^\flat \pi}^g)&\simeq \Hom_{{}^\flat G}(\ind_{J}^{{}^\flat G}({\rho}),\ind_{J^g}^{{}^\flat G}({\rho}^g))\\
&\simeq \bigoplus_{h \in J^g\backslash {}^\flat G/J} \Hom_{J \cap J^{hg}}(\rho, \rho^{hg}).
\end{align*}
So since ${}^\flat \pi$ is irreducible the left Hom space is non zero if and only if $g \in {}^\flat G\mathcal{I}_G(\rho)$. 
Accordingly, 
\begin{equation}\label{intpi} {\Int}_G({{}^\flat \pi}^g)=\mathcal{I}_G({{}^\flat \pi}^g)={}^\flat G {\mathcal{I}_G(\rho)}^g, \quad \forall g\in G.
\end{equation}
\subsubsection{}
\begin{theorem}\label{typesmultiplicites}
We have
\[m_{{}^\flat G}(\pi)= m_{{}^\flat G}({}^\flat\tilde  \pi)  = m_{J}(\tilde \rho).\]
\end{theorem}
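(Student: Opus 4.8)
The plan is to compute all three multiplicities by relating restrictions of $\pi$, ${}^\flat\tilde\pi$, and ${}^\flat\pi$ through the chain of subgroups $J \le \tilde J$, $J \le {}^\flat G$, $\tilde J \le {}^\flat G\tilde J$, and to use the Clifford theory already packaged in Lemma~\ref{GenClif} and Lemma~\ref{clifbis}. First I would establish the right-hand equality $m_{{}^\flat G}({}^\flat\tilde\pi) = m_J(\tilde\rho)$. Here ${}^\flat G \tilde J / {}^\flat G \cong \tilde J / J$ (since ${}^\flat G$ is normal and $J = \tilde J \cap {}^\flat G$), so by Mackey/transitivity of induction $\Res^{{}^\flat G\tilde J}_{{}^\flat G}({}^\flat\tilde\pi) = \Res^{{}^\flat G\tilde J}_{{}^\flat G}\ind_{\tilde J}^{{}^\flat G\tilde J}\tilde\rho$ should unwind to $\ind_{J}^{{}^\flat G}\Res^{\tilde J}_J\tilde\rho = \ind_J^{{}^\flat G}(\bigoplus_{g} \text{(conjugates of }\rho)\text{ with multiplicity }m_J(\tilde\rho))$. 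Since (by \S\ref{JtJ} and the assumption $\CI_G(\rho)\subset\tilde J$) the representation ${}^\flat\pi = \ind_J^{{}^\flat G}\rho$ is irreducible, and distinct $\tilde J$-conjugates of $\rho$ induce to ${}^\flat G$-conjugates of ${}^\flat\pi$, this restriction is $m_J(\tilde\rho)$ copies of a sum over ${}^\flat G\tilde J/\CI_G(\rho){}^\flat G$-worth of conjugates of ${}^\flat\pi$, whence $m_{{}^\flat G}({}^\flat\tilde\pi) = m_J(\tilde\rho)$. (One must be slightly careful that no accidental isomorphisms among conjugates inflate the count; this is exactly controlled by the equivalence in Lemma~\ref{GenClif}(4) applied inside $\tilde J$.)

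Next I would prove the left-hand equality $m_{{}^\flat G}(\pi) = m_{{}^\flat G}({}^\flat\tilde\pi)$. The key point is that $\pi = \ind_{\tilde J}^G\tilde\rho = \ind_{{}^\flat G\tilde J}^G({}^\flat\tilde\pi)$ by transitivity, and $\pi$ is irreducible by the criterion recalled in the subsubsection before the theorem (since $\CI_G(\rho)\subset\tilde J$, hence $I_G(\tilde\rho) = \tilde J$). Now ${}^\flat G\tilde J$ is a normal subgroup of $G$ (it contains ${}^\flat G \supseteq G^{\der}$, and $G/{}^\flat G\tilde J$ is abelian of finite... actually finite since $G/{}^\flat G$ is free abelian of finite rank and $\tilde J$ surjects onto a finite-index... here one uses that $\tilde J$ is compact mod $Z$ and $Z {}^\flat G$ has finite index, so $G/{}^\flat G\tilde J$ is finite). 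So $\pi = \ind_{{}^\flat G\tilde J}^G {}^\flat\tilde\pi$ with ${}^\flat\tilde\pi$ irreducible and the subgroup normal: this is precisely the setup of Lemma~\ref{GenClif}, which gives $\Res^G_{{}^\flat G\tilde J}\pi = m_{{}^\flat G\tilde J}(\pi)\cdot(\bigoplus \text{conjugates of }{}^\flat\tilde\pi)$. Then by transitivity of restriction and the fact that each $G$-conjugate of ${}^\flat\tilde\pi$ restricts to ${}^\flat G$ as $m_{{}^\flat G}({}^\flat\tilde\pi)$ copies of a sum of conjugates of ${}^\flat\pi$ (all genuinely ${}^\flat G$-conjugate, again by the irreducibility-and-conjugation bookkeeping of the previous subsubsection, which computed $\mathcal I_G({}^\flat\pi^g) = {}^\flat G\,\mathcal I_G(\rho)^g$), comparing the two computations of $\Res^G_{{}^\flat G}\pi$ forces $m_{{}^\flat G}(\pi) = m_{{}^\flat G}({}^\flat\tilde\pi)$ — the conjugacy classes line up because the outer $G/{}^\flat G\tilde J$-action permutes the ${}^\flat G\tilde J$-conjugates of ${}^\flat\tilde\pi$ and correspondingly the ${}^\flat G$-conjugates of ${}^\flat\pi$ compatibly.

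The main obstacle, I expect, is the careful bookkeeping of conjugacy classes and orbit sizes — making sure that when one restricts in two stages ($G \to {}^\flat G\tilde J \to {}^\flat G$ versus $G \to {}^\flat G$ directly, or $\tilde J \to J$ inside the other chain) no pair of conjugates that looks distinct actually becomes isomorphic, and conversely that the orbit-counting matches up so the multiplicity is isolated cleanly. All the genuine content for this is already in \S\ref{JtJ} (the identity $I_G(\tilde\rho) = \tilde J I_G(\rho)\tilde J$ and its consequence $I_G(\tilde\rho) = \tilde J$) and in the computation $\mathcal I_G({}^\flat\pi^g) = {}^\flat G\,\mathcal I_G(\rho)^g$; assembling these with Lemma~\ref{GenClif}(2) in the two nested Clifford situations is what drives the proof. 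Once the orbit structures are matched, the equalities of multiplicities are immediate from counting dimensions (or summands) on both sides.
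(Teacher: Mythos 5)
Your approach is genuinely different from the paper's: you propose to decompose $\Res^G_{{}^\flat G}\pi$ directly via Clifford theory, in two stages through ${}^\flat G\tilde J$, and compare with the one-stage decomposition, whereas the paper instead fixes the single Hom space $\Phi=\Hom_G(\ind_J^G\rho,\ind_{\tilde J}^G\tilde\rho)$ and computes $\dim_\BC\Phi$ three different ways (one computation for each of the three quantities in the statement). The paper's method packages all the orbit combinatorics into Mackey/Frobenius manipulations of $\Phi$ and never needs to check that nested conjugation orbits line up; your route puts all that weight on the ``bookkeeping'' that you acknowledge as the main obstacle.

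Your right-hand equality $m_{{}^\flat G}({}^\flat\tilde\pi)=m_J(\tilde\rho)$ is fine: Mackey for $\tilde J\backslash{}^\flat G\tilde J/{}^\flat G$ (a single double coset since ${}^\flat G$ is normal) gives $\Res^{{}^\flat G\tilde J}_{{}^\flat G}{}^\flat\tilde\pi\simeq\ind_J^{{}^\flat G}\Res^{\tilde J}_J\tilde\rho$, and the $\tilde J$-conjugates $({}^\flat\pi)^g$ for $g\in\tilde J/\CI_G(\rho)$ are pairwise non-isomorphic by $\CI_G({}^\flat\pi^g)={}^\flat G\CI_G(\rho)^g$. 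The problem is in the left-hand equality. Carrying out the two-stage restriction as you outline, the multiplicity of ${}^\flat\pi$ in $\Res^G_{{}^\flat G}\pi$ comes out as $m_{{}^\flat G\tilde J}(\pi)\cdot m_{{}^\flat G}({}^\flat\tilde\pi)$ (the orbit bookkeeping does close up cleanly --- using $\CI_G(\rho)\subset\tilde J$, the only pair $(j,k)$ with $({}^\flat\pi)^{jk}\cong{}^\flat\pi$ is the trivial one), so what your comparison actually yields is $m_{{}^\flat G}(\pi)=m_{{}^\flat G\tilde J}(\pi)\cdot m_{{}^\flat G}({}^\flat\tilde\pi)$, not the claimed equality. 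The missing ingredient is $m_{{}^\flat G\tilde J}(\pi)=1$. This is true, and easy once you notice it: ${}^\flat G\tilde J$ is open of finite index in $G$, so $\Ind_{{}^\flat G\tilde J}^G{}^\flat\tilde\pi=\cind_{{}^\flat G\tilde J}^G{}^\flat\tilde\pi=\pi$, and Frobenius reciprocity gives $m_{{}^\flat G\tilde J}(\pi)=\dim\Hom_{{}^\flat G\tilde J}({}^\flat\tilde\pi,\Res^G_{{}^\flat G\tilde J}\pi)=\dim\Hom_G(\pi,\pi)=1$ by irreducibility of $\pi$. Without this step your argument is not complete; with it added, your route does go through. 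Note that the paper's $\Phi$-dimension approach sidesteps this point entirely, since it computes $\Hom_G(\ind_J^G\rho,\pi)$ rather than restrictions of $\pi$, and the combinatorics of which conjugates coincide is absorbed into the Mackey sums.
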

\begin{proof}
{

Using Mackey theory we can easily see that ${}^\flat \tilde\pi$ contains $ {}^\flat\pi$ and that $\pi$ contains ${}^\flat \tilde\pi$, so given \eqref{intpi} and using Lemma \ref{GenClif} we have
$$\Res_{{}^\flat G}^{{}^\flat G \tilde J}({}^\flat \tilde\pi)= m_{{}^\flat G }({}^\flat \tilde\pi)\bigoplus_{h \in \tilde J/\CI_{G}(\rho)} {{}^\flat\pi}^h \text { and }\Res_{{}^\flat G \tilde J}^G(\pi)= m_{{}^\flat G \tilde J}(\pi)\bigoplus_{h \in G/{}^\flat G \tilde{J}} {{}^\flat \tilde\pi^h}.$$
Similarly $$\Res_{{}^\flat G }^G(\pi)= m_{{}^\flat G }(\pi)\bigoplus_{h \in G/{}^\flat G \CI_{G}(\rho)} {{}^\flat\pi}^h.$$ 

We are going to compute the dimension of $\Phi:=\Hom_{G}(\ind_{{J}}^{G}({\rho}),\ind_{\widetilde{J}}^G(\widetilde{\rho})) $ in various ways, mainly by playing with Mackey Theory and Frobenius reciprocities. 

\begin{itemize}
\item First, $
\Phi\simeq\Hom_{{}^\flat G}({}^\flat\pi,\Res^G_{{}^\flat G}(\pi))$. 
So by the irreducibility of ${}^\flat\pi$, 
and Clifford theory 
\begin{equation}\dim_\BC(\Phi)=\bigoplus_{G/{\Int}_G({}^\flat \pi)} m_{{}^\flat G}(\pi) \dim_\BC(\Hom_{{}^\flat G}({}^\flat\pi,({}^\flat\pi)^h)= m_{{}^\flat G}(\pi).\end{equation}
\item Recall that $G/{}^\flat G\tilde J$ is finite, so $\ind_{{}^\flat G\tilde J}^G( {}^\flat \tilde\pi)= \Ind_{{}^\flat G\tilde J}^G( {}^\flat \tilde\pi)$ is admissible.  
Using Frobenius reciprocity and Mackey theory
\begin{align*}
\Phi		&\simeq\Hom_{{}^\flat G\tilde J}(\Res^G_{{}^\flat G \tilde J}(\ind_{J}^G({\rho})), {}^\flat \tilde\pi)\\
 		& \simeq \bigoplus_{g \in J \backslash G/{}^\flat G \tilde J} \Hom_{{}^\flat G\tilde J}(\ind_{{}^\flat G\tilde J \cap J^g}^{{}^\flat G\tilde J}\Res_{J^g}^{{}^\flat G\tilde J \cap  J^g}({\rho}^g), {}^\flat \tilde\pi)\\
		& \simeq \bigoplus_{g \in  G/{}^\flat G \tilde J} \Hom_{{}^\flat G\tilde J}(\ind_{J^g}^{{}^\flat G\tilde J}({\rho}^g), {}^\flat \tilde\pi)
		\end{align*}
Observe that $\ind_{J^g}^{{}^\flat G\tilde J}({\rho}^g) =\ind_{{}^\flat G}^{{}^\flat G\tilde J} \ind_{J^g}^{{}^\flat G}({\rho}^g) =\ind_{{}^\flat G}^{{}^\flat G\tilde J}({}^\flat \pi^g)$. So
		\begin{align*}
\Phi    & \simeq \bigoplus_{g \in G/{}^\flat G \tilde J} \Hom_{{}^\flat G\tilde J}(\ind_{{}^\flat G}^{{}^\flat G\tilde J}({{}^\flat\pi}^g), {}^\flat \tilde\pi)\\
        & \simeq \bigoplus_{g \in G/{}^\flat G \tilde J} \Hom_{{}^\flat G}({{}^\flat\pi}^g, \Res_{{}^\flat G}^{{}^\flat G\tilde J}({}^\flat \tilde\pi))\\
        & \simeq m_{{}^\flat G}({}^\flat \tilde\pi)\bigoplus_{j \in {}^\flat G \tilde J/{\Int}_{{}^\flat G \tilde J}( {}^\flat\pi)} \, \bigoplus_{g \in G/{}^\flat G \tilde J}   \Hom_{{}^\flat G}({{}^\flat\pi}^g, {{}^\flat\pi}^j)
\end{align*} 
For any $j\in \tilde J/\CI_{G}( \rho)$, The last Hom space is nonzero only if $g^{-1}j\in \mathcal{I}_G({{}^\flat\pi})={}^\flat G {\mathcal{I}_G(\rho)} $, hence $g \in {}^\flat G \tilde{J} \subset {\Int}_G({}^\flat \tilde\pi^g)$. 
So,
$$\Phi\simeq m_{{}^\flat G}({}^\flat \tilde\pi)\bigoplus_{j \in \tilde J/\mathcal{I}_{G}( \rho)}  \Hom_{{}^\flat G}({{}^\flat\pi}, {{}^\flat\pi}^j)=m_{{}^\flat G}({}^\flat \tilde\pi) \End_{{}^\flat G}({{}^\flat\pi} )$$
Therefore, 
\begin{equation}\dim_\BC(\Phi)=m_{{}^\flat G}({}^\flat \tilde\pi).\end{equation}
\item Finally, 
\begin{align*}
\Phi	 & \simeq \bigoplus_{g \in G/{}^\flat G     \tilde J} \Hom_{{}^\flat G}({{}^\flat\pi}^g, \Res_{{}^\flat G}^{{}^\flat G\tilde J}({}^\flat \tilde\pi))\\	
		&\simeq \bigoplus_{g \in G/{}^\flat G     \tilde J} \bigoplus_{g\in \tilde J\backslash {}^\flat G \tilde J /{}^\flat G}  \Hom_{{}^\flat G}({{}^\flat\pi}^g, \ind_{{}^\flat G\cap \tilde J^h}^{{}^\flat G} \Res_{{}^\flat G\cap \tilde J^h}^{\tilde J^h}(\tilde \rho^h) )\\
		&\simeq \bigoplus_{g \in G/{}^\flat G     \tilde J}   \Hom_{{}^\flat G}({{}^\flat\pi}^g, \ind_{J}^{{}^\flat G} \Res_{J}^{\tilde J}(\tilde \rho) )\\
		&\simeq m_{J}(\tilde \rho) \bigoplus_{g \in G/{}^\flat G     \tilde J}  \bigoplus_{j \in \tilde J/\mathcal{I}_G(\rho)} \Hom_{{}^\flat G}({{}^\flat\pi}^g, {{}^\flat\pi}^j )
\end{align*}
Accordingly $\Phi \simeq m_{J}(\tilde \rho) \End_{{}^\flat G}({{}^\flat\pi} )$.  
In conclusion $\dim_\BC(\Phi)= m_{J}(\tilde \rho)$. 
\end{itemize}}
\end{proof}

\begin{corollary}\label{cor42}
We have
$$\Res_{{}^\flat G}(\pi)=m_J(\tilde \rho)\bigoplus_{h\in G/{}^\flat G \mathcal{I}_G(\rho)} {{}^\flat\pi}^h.$$
\end{corollary}

\begin{lemma}\label{jdag} Let ${}^\dag J$ be as in Lemma \ref{clifbis} where $\tH = \tJ$ and $\tilde\sigma = \tilde\rho$. Then
\[\bigcap_{\chi \in X_{G}(\pi)} \ker(\chi)= {}^\dag J\; {}^\flat G \]
In particular, ${}^\dag J = \CI_G(\rho) \cap \bigcap_{\chi \in X_{G}(\pi)} \ker(\chi)$ and ${}^\dag ({}^\flat\pi)=\ind_{{}^\dag J}^{{}^\dag J\; {}^\flat G }({}^\dag \rho)$.
\end{lemma}

\begin{proof}

Consider the inclusion $\psi \colon \CI_G(\rho)/J \hookrightarrow G/{}^\flat G$. 
The subgroup $\psi(\CI_G(\rho)/J )$ is a finite index subgroup of $ G/{}^\flat G$. 
Hence, any character of $\CI_G(\rho)/J $ can be extended to a character of $G/{}^\flat G$ in $[ G/{}^\flat G : \psi(\CI_G(\rho)/J ) ]$ ways. 
Let \[\psi^*\colon X(G/{}^\flat G)\to X(\tilde J/J)\] be the map induced by the restriction.

Note that for any $\nu \in X_{\tilde J}(\tilde \rho)$ if $\tilde{\rho}\simeq  \tilde{\rho} \otimes \nu$ then $\pi \simeq \pi \otimes \bar\nu$ where $\bar \nu$ is any extension of $\nu$ to $G$. 
This shows that $(\psi^{*})^{-1}(X_{\tilde J}(\tilde \rho))\subset X_{G}(\pi)$ where $(\psi^{*})^{-1}(X_{\tilde J}(\tilde \rho))$ represents the set of all extensions of elements in $X_{\tilde J}(\tilde \rho)$ to $G$. 
Accordingly
\[\bigcap_{\chi \in X_{G}(\pi)} \ker(\chi) \subset \bigcap_{\bar\nu \in (\psi^{*})^{-1}(X_{\tilde J}(\tilde \rho))
} \ker(\nu)={}^\flat G \bigcap_{\nu \in X_{\tilde J}(\tilde \rho)
} \ker(\nu)={}^\dag J\; {}^\flat G .\]
Using \eqref{intpi} and Theorem \ref{typesmultiplicites} we have
    \begin{align*}
     [\CI_G(\rho) {}^\flat G:\bigcap_{\nu \in X_{G}(\pi)} \ker(\nu) ]
    &=[\text{Int}_G({}^\dag \pi):\bigcap_{\nu \in X_{G}(\pi)} \ker(\nu) ]\\
    &=(m_G(\pi))^2=(m_J(\tilde \rho))^2
    =[\CI_G(\rho):{}^\dag J]\\
    &=[\CI_G(\rho) {}^\flat G : {}^\dag J\; {}^\flat G ].
    \end{align*}  
Therefore, $ \bigcap_{\chi \in X_{G}(\pi)} \ker(\chi) $ and  ${}^\flat G{}^\dag J $ must be equal and also ${}^\dag J = \CI_G(\rho)\; \cap \;\bigcap_{\chi \in X_{G}(\pi)} \ker(\chi)$. 

For the last statement observe that $\ind_{{}^\dag J}^{{}^\dag J\; {}^\flat G }({}^\dag \rho)$ is irreducible (since $\CI_G({}^\dag\rho) \subset \CI_G(\rho)$), 
occurs in $\Res_{{}^\dag J\; {}^\flat G }^{G}(\pi)$ and satisfies $\Res_{{}^\flat G}^{{}^\dag J {}^\flat G}(\ind_{{}^\dag J}^{{}^\dag J\; {}^\flat G }({}^\dag \rho))= \ind_{J}^{{}^\flat G}(\rho)={}^\flat \pi$. 
We now conclude using (4) Lemma \ref{clifbis}. 
\end{proof}

\subsection{Center of Hecke algebras}

The following result describes the center of the Hecke algebra of a supercuspidal type. 
\begin{theorem}\label{centerhecke} 
Let $(J,\rho)\in \CJ_G^{wc}$. We have the following isomorphisms of $\BC$-algebras
$$\mathcal{Z}(\CH(G,\rho))= \mathcal{Z}(\CH(\CI_G(\rho),\rho))= \CH({}^\dag J, \rho) \simeq \BC[{}^\dag J/J].$$
The two first are canonical, while the last is not.
\end{theorem}
\begin{proof}
The first equality follows readily from Lemma \ref{heckeisintertwining}. 
Note that we have $\CH(G,\rho) \simeq \CH(G,{}^\flat \pi)$ \cite[Chapitre I \S 8.6 (b)]{Vigneras1996}.
Given Lemma \ref{clifbis}, the proof of \cite[Proposition 1.6.3.2]{Roc09} shows (upon replacing ${}^0G$ in \emph{loc. cit.} by ${}^\flat G$, 
which amounts to replacing ${}^0 \pi$ in \emph{loc. cit.} by ${}^\flat \pi$) that 
\[\mathcal{Z}(\CH(G,{}^\flat\pi))= \CH( {}^\dag({}^\flat G) ,{}^\flat\pi),\]
where ${}^\dag({}^\flat G)=\bigcap_{\chi \in X_{G}(\pi)} \ker(\chi)={}^\dag J\; {}^\flat G $. 
Lemma \ref{heckeisintertwining} applied to ${}^\dag({}^\flat G)$  shows that 
\[\CH( {}^\dag({}^\flat G) ,{}^\flat \pi)= \CH( \CI_{{}^\dag({}^\flat G)}(\rho) ,\rho)= \CH( {}^\dag({}^\flat G) \cap \CI_G(\rho) ,\rho).\]
Now applying Lemma \ref{jdag} we get ${}^\dag({}^\flat G) \cap \CI_G(\rho)={}^\dag J$ and so $\mathcal{Z}(\CH(G,{}^\flat\pi))=\CH( {}^\dag J ,\rho)$. 

We could have also reproduced the same argument of \cite[Proposition 1.6.3.2]{Roc09} with ${}^\dag J$ playing the role
of ${}^\dag({}^\flat G)$  and $\CI_G(\rho)$ that of $G$ and prove directly the second isomorphism above. 

For the last isomorphism, we have the isomorphism 
\[\mu \colon\BC[{}^\dag J/J] \iso \End_{{}^\dag J}({}^\dag \rho\otimes \BC[{}^\dag J/J])=\CH( {}^\dag J ,\rho)\] 
defined in the proof of Lemma \ref{freeoverz}. This concludes the proof of the theorem.
\end{proof}
\subsection{Criterion for the Hecke algebra of a supercuspidal type to be commutative}

\begin{proposition}\label{Prop:HAComm}
Let $(J,\rho)\in \CJ_G^{wc}$. 
The following statements are equivalent:
\begin{enumerate}
\item The representation $\Res_{{}^\flat G}^G(\pi)$ is multiplicity free. 
\item The representation $\Res_J^{\widetilde{J}}(\tilde\rho)$ is also multiplicity free.
\item The Hecke algebra $\CH(G, \rho)$ is commutative. 
\end{enumerate}
\end{proposition}
\begin{proof}

Given that ${}^\flat G$ is open we know that $\CH(G, {}^\flat\pi) \simeq \CH(G, \rho)\simeq \CH(\mathcal{I}_G(\rho), \rho)$ thanks to the transitivity of the induction. 
So by Theorem \ref{centerhecke} and Lemma \ref{freeoverz} we have the equivalence $(2) \Leftrightarrow (3)$ and Proposition \ref{typesmultiplicites} gives $(1) \Leftrightarrow (2)$.
\end{proof}

\section{Weyl action on (center of) Hecke algebras and a Satake isomorphism}\label{sec:nonsup}
The results in this section are  generalizations of \cite[Section 1.6 - Section 1.9]{Dat99}, where similar results are obtained for cuspidal types in the case ${}^\flat G = {}^0 G$. 
\subsection{$G$-equivalence of types}\label{gequiv}
\subsubsection{} 
For $(J,\rho)\in \CJ_G$, let $[\rho]$ be the set of irreducible representations of $G$ that contain $\rho$ and  $[\pi\otimes \bar\nu]_G$ be the subset of irreducible representations of $G$ whose inertial support contains $\pi\otimes \bar\nu$ for $\nu \in X({\,{}^\natural J} /J)$.

\begin{definition}\label{def:wct} We say that two types $(J,\rho)$ and $(J',\rho')$ are $G$-equivalent and write it as $(J,\rho) \cong_G (J',\rho')$ if $\ind_{J}^G(\rho) \simeq \ind_{J'}^G (\rho').$
\end{definition}

\begin{lemma}\label{NPT} 
Let $(J,\rho)$ and $(J', \rho')$ be two types in $\CJ_G$. The following properties are equivalent:
\begin{enumerate}
    \item $[\rho] \cap [\rho'] \neq \emptyset$;
    \item[(1)'] $(J,\rho) \approx_G (J', \rho')$;
    \item $[\rho] = [\rho']$;
    \item $\Hom_G(\ind_{J'}^G (\rho'), \ind_J^G(\rho)) \neq 0$.\\ 
    If $(J,\rho), (J', \rho')\in \CJ_G^{wc}$, this is also equivalent to
    \item $(J,\rho) \cong_G (J',\rho')$.
\end{enumerate}
\end{lemma}
\begin{proof}
$(1)\Leftrightarrow (2)$ is clear since any two orbits in ${\Irr}(G)$ under the action of $X(G/{{}^\flat G})$ are disjoint or equal and by Theorem \ref{genpro54} we have 
\[[\rho] =\bigsqcup_{ \nu \in X({\,{}^\natural J}/J)}[\pi\otimes\bar\nu]_G \text{ and } [\rho'] =\bigsqcup_{ \nu \in X({}^\natural J'/J')}[\pi'\otimes \bar\nu]_G.\]

$(1)'\Leftrightarrow (2)$ follows from \cite[Proposition 3.5]{BK98}.

$(2)\Leftrightarrow (3)$. Let $I$ be a system of representatives of ${}^\dagger J \backslash G / J'$. 
Using Mackey formula and Frobenius reciprocity we have 
\begin{align*}
    \Hom_G(\ind_{J'}^G (\rho'), \ind_{J}^G (\rho)) 	&\simeq \Hom_G(\ind_{J'}^G (\rho'), \ind_{J}^G (\Res_J^{{}^\dag J }({}^\dag \rho))\\
    	&\simeq \Hom_G(\ind_{J'}^G (\rho'),\ind_{{}^\dag J }^G({}^\dag \rho \otimes_{\BC} \BC[{}^\dag J/J]) )\\
    	& \simeq \bigoplus_{x \in I}  \Hom_{J' \cap{}^\dag J^{x}}(\rho',({}^\dag \rho \otimes_{\BC} \BC[{}^\dag J /J])^{x} )\\
     	& \simeq \bigoplus_{x \in I}  \Hom_{J' \cap{}^\dag J^{x}}\left(\rho',{}^\dag \rho^{x} \otimes_{\BC} \BC[({}^\dag J )^{x}/J^{x}] \right)\\
     & \simeq \bigoplus_{x \in I}  \Hom_{J' \cap J^{{x}}}(\rho',{}^\dag \rho^{x}) \otimes_{\BC} \BC[({}^\dag J )^{x}/J^{x}] \\
     &\simeq \bigoplus_{x \in I}  \Hom_{J' \cap J^{{x}}}(\rho',{}^\dag \rho^{x} )\otimes_{\BC} \BC[{}^\dag J^{x}/J^{x} ]\\
     &\simeq \Hom_G(\ind_{J'}^G (\rho'), \ind_{{}^\dag J}^G({}^{\dag} \rho)  )\otimes_{\BC} \BC[{}^\dag J/J].
     \end{align*}
The commutation of the tensor product in the fifth equation comes from the fact that $J'\cap ({}^\dag J)^x = J' \cap J^x $, hence $J'\cap {}^\dag J^x $ acts trivially on $\BC[{}^\dag J^x /J^x]$.
The isomorphism from the sixth to seventh equation follows from Mackey formula and is obtained as follows.  Let $\psi \in \Hom_G(\ind_{J'}^G (\rho'), \ind_{{}^\dag J}^G({}^{\dag} \rho))$ map to $(\psi_i)_{i\in I} \in \left(\bigoplus_{i \in I}  \Hom_{J' \cap{}^\dag J^{x}}(\rho',{}^\dag \rho^{x} )\right)$. The isomorphism is then given by  sending $ \psi \otimes \chi \rightarrow (\psi_i \otimes \chi^{x})_{i \in I}$.
 
Now by Lemma \ref{clifbis} we deduce 
\begin{align} 
	\Hom_G(\ind_{J'}^G (\rho'), \ind_{J}^G (\rho))
			&\simeq m_{{}^\flat G}(\pi) \Hom_G(\ind_{J'}^G (\rho'),\pi  )\otimes \BC[{}^\dag J/J]. \label{decompoindjj}
\end{align}

By Theorem \ref{genpro54}, the pair $(J',\rho')$ is a type for $G$. 
Therefore
\[\Hom_{G}(\ind_{J'}^G(\rho'),\ind_{J}^G(\rho)) \neq 0 \Leftrightarrow \pi \in [\rho'] .\]


$(4) \Rightarrow (1)$ is clear, let us show $(1)' \Rightarrow (4)$ assuming $(J,\rho), (J', \rho')\in \CJ_G^{wc}$: 
By Theorem \ref{genpro54}, we have an equality $[G,\pi']_G= [G,\pi\otimes \bar\nu]_G$ for some $\nu \in X({\,{}^\natural J} /J)$, and this is equivalent to (by Corollary \ref{indquotients}) $\pi \simeq \pi' \otimes \chi$ for some $\chi \in X(G/{}^\flat G)$, which is equivalent to (by (5) Lemma \ref{GenClif}) 
$\ind_{J'}^{{}^\flat G}(\rho') \simeq (\ind_{J}^{{}^\flat G}(\rho))^x \simeq \ind_{J^x}^{{}^\flat G}(\rho^x)$ for some $x\in G$, since (as we saw in \S \ref{322}) {the assumption insures that} $\ind_{J}^{{}^\flat G}(\rho) \in \CO_{{}^\flat G}(\pi)$ and  $\ind_{J'}^{{}^\flat G}(\rho') \in \CO_{{}^\flat G}(\pi')$.
This latter implies $\ind_{J'}^{G}(\rho') \simeq (\ind_{J}^{G}(\rho))^x \simeq \ind_{J}^{G}(\rho)$. 
\end{proof}

\begin{corollary}\label{isogflat0}
 {Let $(J,\rho)\in \CJ_G$. Let $({}^\natural J, {}^\natural \rho \otimes \nu)$  be the type in Theorem \ref{genpro54}}. We have an isomorphism of $\BC$-algebras 
\[\CH(G, \rho) \simeq \bigoplus_{ \nu\in X({\,{}^\natural J}/J)}\CH(G  ,  {}^\natural \rho\otimes \nu).\]
 \end{corollary}
 \begin{proof}
 We first note the isomorphism of $\BC$-modules
\begin{align*}
\End_{G}(\ind_{J}^G(\rho))&=  \bigoplus_{\nu,\nu' \in X({\,{}^\natural J}/J)}\Hom_{G}(\ind_{{\,{}^\natural J}}^G({}^\natural \rho\otimes \nu),\ind_{{\,{}^\natural J}}^G({}^\natural \rho\otimes \nu')).
 \end{align*}
Set ${}^\natural G:= {}^\dag ({}^\flat G)\cap {}^0 G$ (which contains ${}^\flat G$). 
Now, it is immediate to see that for any $\nu \in X({}^\natural J/J)$, the pair $({}^\natural J, {}^\natural \rho \otimes \nu)$ belongs to the set $\CJ_G$ but this time for the case where ${}^\natural G$ is playing the role of ${}^\flat G$. 
 
 Since $[{}^\natural \rho\otimes \nu'] = [{}^\natural \rho\otimes \nu]$ if and only if $\nu=\nu'$ (see Theorem \ref{genpro54}), Lemma \ref{NPT}\footnote{applied to the situation where ${}^\flat G$ is replaced by ${}^\natural G$.} yields 
 an isomorphism of $\BC$-modules 
 \begin{align*}
 \End_{G}(\ind_{J}^G(\rho))&=  \bigoplus_{\nu \in X({\,{}^\natural J}/J)}\End_{G}(\ind_{{\,{}^\natural J}}^G({}^\natural \rho\otimes \nu)).
 \end{align*}
 which is clearly an isomorphism of $\BC$-algebras. 
This concludes the proof. 
\end{proof}

\begin{corollary}\label{cor:2flat-0}
Let $(J, \rho)$ be a cuspidal pair in $\CJ_G$. As in Remark \ref{tjeqig}, we assume $\tilde J = \CI_G(\rho)$ and that $\Res_J^{\tilde J} \tilde\rho = \rho$ is irreducible. Then $({}^\natural J, {}^\natural \rho) = ({}^0 J, {}^0 \rho)$. Moreover,  
\[\CH(G, \rho)\simeq\bigoplus_{\nu\in X( {{}^0 J}/J)} \CH(G  ,  {}^0 \rho\otimes {}^0 \nu).\]
\end{corollary}
\begin{proof}
     Lemma \ref{clifbis} implies that ${}^\dagger J = \tilde J$ and Lemma \ref{jdag} implies that ${}^\dagger({}^\flat G) = \tilde J \; {}^\flat G$. Then ${}^\natural J = {}^0 J \cap\;  {}^\dagger({}^\flat G) = {}^0 J$. It is then clear that ${}^\natural \rho = {}^0\rho$. This proves the first claim. The second claim follows immediately from Corollary \ref{isogflat0}.
\end{proof}

\subsection{Normalizer of a type}\label{sec:nortype}
Let $\bM$ be a Levi subgroup of $\bG$ and ${}^\flat M \subset M \cap {}^\flat G$.  This is an open  subgroup of $M$ of finite index in ${}^0M$ and containing $M^{\text{der}}$. 
We write $\CJ_M$ (resp. $\CJ_M^{wc}$) as in \S \ref{sometypessec}. 
For $(J_M,\rho_M)\in \CJ_M$ we will regularly use the notation $\pi_M:=\ind_{\tilde J_M} ^M (\tilde\rho_M) \in \Irr(M)$. 

The normalizer $N_G(M)$ of a Levi $M$ acts naturally by conjugation on ${\Irr}(M)$.
Recall that $\sigma^n$ denotes the conjugate of any $\sigma\in {\Irr}(M)$ by an element $n\in N_G(M)$ and the pair $(J_M^n,\rho_M^n)$ the conjugate of $(J_M,\rho_M)$. 
Moreover, $\CI_M(\rho_M^n)=(\CI_M(\rho_M))^n$ and $\CH(M,\rho_M)\simeq \CH(M,n(\rho_M))$. 
We are interested in the case where $n$ normalizes $\CJ_M$, this is the case for example when $n$ normalizes ${}^\flat M$. Assume for the rest of section 4 that $N_G(M)$ normalizes ${}^\flat M$.

\begin{proposition}\label{Gequiv}Let $(J, \rho)$ be a $G$-cover (\cite[Definition 8.1]{BK98}) of $(J_M, \rho_M)\in \CJ_M$. 
For any $n \in N_G(M)$, the following statements are equivalent:
\begin{enumerate}
    \item $n[\rho_M] \cap [\rho_M] \neq \emptyset$;
    \item[(1)'] $n[\rho_M] = [\rho_M]$;
    \item $\Hom_M(n(\ind_{J_M}^M(\rho_M)), \ind_{J_M}^M(\rho_M)) \neq 0$;
    \item there exists $m \in M$ such that $mn\in \CI_G(\rho)$. \\
    If $(J_M,\rho_M),\in \CJ_M^{wc}$, this is also equivalent to
    \item $(J_M^n,\rho_M^n)\cong_M (J_M,\rho_M)$.
\end{enumerate}
The group $N_G(\rho_M) := \{ n \in N_G(M)\colon (J_M^n,\rho_M^n)\cong_M (J_M,\rho_M)\}$ is called the normalizer of the type $(J_M,\rho_M)$. 
In particular, Given (1)', the stabilizer of $\fkS(J_M,\rho_M)$ in the Weyl group is $W_{[\rho_M]} :=N_G(\rho_M)/M$.
\end{proposition}
\begin{proof}
As in \cite[Proposition 1.9.1]{Dat99}, observe that $n(\ind_{J_M}^M (\rho_M)) \simeq (\ind_{J_M^n}^M (\rho_M^n))$. 
The equivalences $(1) \Leftrightarrow (1)' \Leftrightarrow (2) \Leftrightarrow (4)$, follow then from Lemma \ref{NPT}.

$(2) \Leftrightarrow (3)$
As we saw in the proof of Lemma \ref{NPT} we have 
\[(2) \Leftrightarrow  \Hom_M(\ind_{J_M}^M(\rho_M), \pi_M^n) \neq 0.\] 
Using Frobenius reciprocity and then Mackey formula we see that the right hand side is equivalent to 
\[\Hom_{J_M\cap \tilde J_M^{m'n}}(\Res_{J_M\cap \tilde J_M^{m'n}}^{ J_M}(\rho_M), \Res_{J_M\cap \tilde J_M^{m'n}}^{ \tilde J_M^{m'n}} (\tilde \rho_M^{m'n}))\neq 0 \quad \text{ for some }m' \in M.\]
Now since $J_M\cap \tilde J_M^{m'n}=J_M\cap J_M^{m'n}$ we deduce (using Clifford theory) that the previous statement is equivalent to 
\[\Hom_{J_M\cap J_M^{mn}}(\Res_{J_M\cap  J_M^{mn}}^{ J_M} (\rho_M), \Res_{J_M\cap J_M^{mn}}^{ J_M^{mn}}( \rho_M^{mn}))\neq 0  \quad \text{ for some }m \in M.\]
Let $P$ be any parabolic subgroup with Levi factor $M$ and a radical unipotent $U$, $\bar U$ its opposite. 
By definition of a cover, we have an Iwahori decomposition for $J$ with respect to any parabolic subgroup with Levi component $M$. 
Now it suffices to observe that 
$$J \cap J^{mn}= (J \cap J^{mn} \cap U ) \cdot (J_M\cap J_M^{mn})\cdot (J \cap J^{mn} \cap \bar U) $$
and that $\rho$ and $\rho^{mn}$ are both trivial on both unipotent factors $J \cap J^{mn} \cap U$ and $J \cap J^{mn} \cap \bar U$. 
Therefore, 
$$\Hom_{J\cap J^{mn}}(\Res_{J\cap  J^{mn}}^{ J} (\rho), \Res_{J\cap J^{mn}}^{J^{mn}}( \rho^{mn})) = \Hom_{J_M\cap J_M^{mn}}(\Res_{J_M\cap  J_M^{mn}}^{ J_M} (\rho_M), \Res_{J_M\cap J_M^{mn}}^{ J_M^{mn}}( \rho_M^{mn})).$$
Which shows $mn\in \CI_G(\rho) \Leftrightarrow (2)$. 
This concludes the proof of the proposition. 
\end{proof}
\begin{corollary}
For any two $G$-covers $(J,\rho)$ and $(J',\rho')$ of two types $(J_M,\rho_M)\in \CJ_M^{wc}$ and $(J_M',\rho_M')\in \CJ_M^{wc}$, the following properties are equivalent:
\begin{enumerate}
\item $[\rho]\cap [\rho'] \neq \emptyset$;
\item $(J,\rho) \cong_G (J',\rho')$.
\end{enumerate}
\end{corollary}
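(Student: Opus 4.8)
The plan is to prove the two implications separately. The implication $(2)\Rightarrow(1)$ is immediate: $\ind_J^G(\rho)\simeq\ind_{J'}^G(\rho')$ is finitely generated, hence admits an irreducible quotient $\pi$, and by Frobenius reciprocity $\Hom_J(\rho,\pi)\neq 0$ and $\Hom_{J'}(\rho',\pi)\neq 0$, so $\pi\in[\rho]\cap[\rho']$. For $(1)\Rightarrow(2)$ the first observation is that, $(J,\rho)$ being an $\fks$-type and $(J',\rho')$ an $\fks'$-type, the set $[\rho]$ (resp. $[\rho']$) is exactly the class of irreducible objects of $\fkR^{\fks}(G)$ (resp. $\fkR^{\fks'}(G)$); by the Bernstein decomposition $[\rho]\cap[\rho']\neq\emptyset$ already forces $\fks=\fks'$.

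Next I would reduce to the case where the two Levi types lie over the same point of $\fkB(M)$. Writing $\fks=[M,\sigma]_G=[M',\sigma']_G$, after replacing the primed datum by a $G$-conjugate --- which changes $\ind_{J'}^G(\rho')$ only up to isomorphism and preserves the property of being a $G$-cover --- one may assume $M'=M$; and since the fibre of $\fkB(M)\to\fkB(G)$ over $\fks$ is a single $N_G(M)/M$-orbit, a further conjugation by an element of $N_G(M)$ (which normalizes $M$, hence again preserves covers) lets one assume in addition $\fkt'=\fkt$. At this point $(J_M,\rho_M)$ and $(J_M',\rho_M')$ are two $\fkt$-types of the connected reductive group $M$, both arising from a supercuspidal representation of $M$ of the form required in Lemma~\ref{NPT} with intertwining inside the relevant compact-mod-centre subgroup, and $\fkt=\fkt'$ gives $[\rho_M]=[\rho_M']$; applying Lemma~\ref{NPT} inside $M$ then yields $(J_M,\rho_M)\cong_M(J_M',\rho_M')$, i.e. $\ind_{J_M}^M(\rho_M)\simeq\ind_{J_M'}^M(\rho_M')$.

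It then remains to promote this $M$-equivalence of Levi types to the desired $G$-equivalence of their covers. My plan here is, first, to fix a parabolic $P=MN$ of $\bG$ and note that the normalized Jacquet functor $r_P$ is exact and reflects isomorphisms on $\fkR^{\fks}(G)$: any nonzero object of $\fkR^{\fks}(G)$ has an irreducible subquotient $\tau$ whose cuspidal support meets $M$, and for such $\tau$ one has $r_P\tau\neq 0$ --- a nonzero Jacquet module exists for some parabolic with Levi $M$ by second adjointness, and an element of $N_G(M)$ transports it to $P$ while fixing $\tau$ up to isomorphism, so it stays nonzero. Second, I would invoke the Bushnell--Kutzko theory of covers \cite{BK98}: for a $G$-cover $(J,\rho)$ of $(J_M,\rho_M)$ the object $r_P(\ind_J^G(\rho))$ is built functorially out of $\ind_{J_M}^M(\rho_M)$ --- through the bimodule realizing the canonical embedding $\CH(M,\rho_M)\hookrightarrow\CH(G,\rho)$, up to a fixed unramified twist and a multiplicity that depends only on $P$ and the combinatorics of $\fkt$ and not on the particular cover --- and similarly $\Hom_G(\ind_J^G(\rho),\ind_{J'}^G(\rho'))$ receives the image of $\Hom_M(\ind_{J_M}^M(\rho_M),\ind_{J_M'}^M(\rho_M'))$. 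The isomorphism from the previous paragraph therefore produces a $G$-morphism $f\colon\ind_J^G(\rho)\to\ind_{J'}^G(\rho')$ with $r_P(f)$ an isomorphism, and conservativity of $r_P$ on $\fkR^{\fks}(G)$ makes $f$ itself an isomorphism, which is $(2)$.

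The hard part will be this last step: making precise the claim that a $G$-cover is determined, up to $G$-equivalence of types, by the $M$-equivalence class of the Levi type it covers. This is not formal --- for instance $\ind_J^G(\rho)$ is in general not $\Ind_P^G(\ind_{J_M}^M(\rho_M))$, the discrepancy being governed by $W^{\fkt}$ --- and it rests on the structural description of $\CH(G,\rho)$ in terms of $\CH(M,\rho_M)$ and $W^{\fkt}$ from cover theory, together with the compatibility of $\Ind_P^G$ and $r_P$ with the category equivalences $\fkM_\rho$ and $\fkM_{\rho_M}$; an alternative, once the commutation of $r_P$ with $\fkM_\rho$ is in hand, is to deduce the statement from the uniqueness (up to $G$-conjugacy and $G$-equivalence) of covers over a given Levi type in the form established by Bushnell--Kutzko.
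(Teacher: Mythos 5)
Your direction $(2)\Rightarrow(1)$ is fine, and the reduction in $(1)\Rightarrow(2)$ to $M'=M$ and $\fkt'=\fkt$ followed by an application of Lemma~\ref{NPT} inside $M$ is the right start. But you have correctly diagnosed, and then not actually closed, the gap in the ``promotion'' step: the lemma that a $G$-cover of an $M$-type is determined up to $G$-equivalence by the $M$-equivalence class of that Levi type. Neither of your two strategies is carried through. For the Jacquet-functor route, the missing content is precisely the existence of a $G$-morphism $f\colon\ind_J^G(\rho)\to\ind_{J'}^G(\rho')$ with $r_P(f)$ an isomorphism; conservativity of $r_P$ on $\fkR^\fks(G)$ is fine, but you produce $f$ only by appeal to a ``built functorially\ldots up to a fixed unramified twist and a multiplicity that depends only on $P$ and the combinatorics of $\fkt$ and not on the particular cover'' assertion that is itself the statement needing proof. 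For the second route, the uniqueness-of-covers result you invoke is not stated in this form in \cite{BK98}, so it cannot simply be cited without argument.

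The paper's own proof routes around this entirely: it points to \cite[Proposition~4.5.1]{Dat99}, whose argument works at the level of $G$ by a direct Mackey/Frobenius computation of $\Hom_G(\ind_{J'}^G(\rho'),\ind_J^G(\rho))$, using the Iwahori factorization of the covers to reduce each term $\Hom_{J'\cap J^g}(\rho',\rho^g)$ to the corresponding Hom-space for $(J_M,\rho_M)$ and $(J'_M,\rho'_M)$ --- the same mechanism used in the proof of Proposition~\ref{Gequiv} --- and Proposition~\ref{Gequiv} then pins down the relevant double coset support. That direct computation is exactly what your abstract approach keeps deferring; if you want to complete your proof along your own lines, the cleanest fix is to establish the Hom-space identification by the same Iwahori-decomposition/Mackey argument, which would in effect reproduce Dat's computation rather than avoid it.
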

\begin{proof}
Given Proposition \ref{Gequiv} this is the same as \cite[Proposition 4.5.1]{Dat99}. We remark that Proposition 4.5.1 of {\em loc. cit.} assumes Conjecture 1.4 in {\em loc. cit.}, which is verified in Section 1.5 of {\em loc. cit.} in the complex case. See also \cite[Lemma B.3]{BaS20}. 
\end{proof}

\subsection{Weyl action on the center}\label{WeylAcCEnt}
\subsubsection{}{
Let $(J_M,\rho_M)\in \CJ_M^{wc}$.
An element $z\in \mathfrak{Z}^{[\rho_M]}$ is a collection of morphisms $z_\sigma\in \End_M(\sigma), \forall \sigma \in \text{Obj}(\fkR_{\rho_M})$, such that $f\circ z_\sigma=z_\tau \circ f$ for any morphism $f\in \Hom_M( \sigma,\tau), \forall \sigma, \tau \in\fkR_{\rho_M}$. 
In particular, $z_\sigma\in \CZ(\End_M(\sigma)), \forall \sigma \in \text{Obj}(\fkR_{\rho_M})$. 
One case of interest: 
If $\Sigma =\ind_{J_M}^M(\rho_M)$ then $z_{\Sigma}\in \CZ(\CH(M,\rho_M))$.

The equivalence of categories (Definition \ref{def:type}) $\mathfrak{M}_{\rho_M}$ induces a ring isomorphism
\[m_{\rho_M}\colon\mathfrak{Z}^{[\rho_M]} \iso \CZ(\CH(M,\rho_M)),\quad z=(z_\sigma)_{\sigma \in \fkR_{\rho_M}} \mapsto z_{\ind_{J_M}^M(\rho_M)}.\]

Let $(J, \rho)$ be a $G$-cover of $(J_M,\rho_M)$. 
We know by \cite[Theorem 8.3]{BK98} that $(J, \rho)$ is an $\fkS(J,\rho)$-type and Theorem \ref{genpro54} gives an explicit description for this set:  
\[\fkS(J,\rho)=\{[M,\sigma\otimes \bar\nu]_G \colon \nu \in X({{}^\natural J_M}/J_M)\} .\]
Therefore, we also have an isomorphism of rings 
\[m_{\rho}\colon\mathfrak{Z}^{[\rho]} \iso \CZ(\CH(G,\rho)),\quad z=(z_\sigma)_{\sigma \in \fkR_{\rho}} \mapsto z_{\ind_{J}^G(\rho)}.\]
\subsubsection{}
In this section, we define an action of $N_G(\rho_M)$ on $\fkZ^{[\rho_M]}$ and by transport of structure we get an action on $\CZ(\CH(M,\rho_M))$ that is compatible with the isomorphism $m_{\rho_M}$.


Let $n\in N_G(M)$, Proposition \ref{Gequiv} shows that $n$ normalizes $[\rho_M]$ if and only if $(\ind_{J_M}^M(\rho_M))^n\simeq \ind_{J_M}^M(\rho_M)$. 
Accordingly, for any $n \in N_G(\rho_M)$ and any $(\sigma,\CV)\in \text{Obj}(\fkR_{\rho_M})$ we clearly have 
$\sigma^n \in \text{Obj}(\fkR_{\rho_M})$. 
\begin{itemize}
\item
For any $n \in N_G(\rho_M)$ and $z\in \mathfrak{Z}^{[\rho_M]}$, define
the following map:
\[n\cdot z = ((n\cdot z)_\sigma:=z_{\sigma^{n^{-1}}})_{\sigma \in \fkR_{\rho_M}}.\] 
This defines an action of $N_G(\rho_M)$ on $\mathfrak{Z}^{[\rho_M]}$. 
Now if $m\in M$, then we have a commutative diagram 
\[	
	\begin{tikzcd}
	\sigma\arrow{r}{r_m } \arrow[swap]{d}{z_{\sigma}} &  \sigma^{m^{-1}}\arrow{d}{z_{\sigma^{m^{-1}}}} \\
	\sigma\arrow{r}{r_m}  &  \sigma^{m^{-1}}
	\end{tikzcd}
\]
where $r_m$ is the isomorphism given by $ v \mapsto \sigma(m^{-1})(v)$. 
It follows readily that $z_\sigma =z_{\sigma^{m^{-1}}}$. 
Accordingly, the defined action of $N_G(\rho_M)$ factors through $W_{[\rho_M]}$. 
\item Write $\CW$ for the underlying space of  $\Sigma=\ind_{J_M}^M(\rho_M)$. 
For any $n\in N_G(\rho_M)$ choose an element $\bar w \in \text{Aut}_\BC(\CW)$ which realizes the isomorphism $\Sigma \iso  \Sigma^{n^{-1}} $. 
Given an element $z \in \fkZ(\fkR_{\rho_M})$, the following diagram is by definition commutative. 
\[	
	\begin{tikzcd}
	\CW\arrow{r}{\bar w} \arrow[swap]{d}{z_{\Sigma}} &\CW\arrow{d}{z_{\Sigma^{n^{-1}}}} \\
	\CW\arrow{r}{\bar w}  &  \CW
	\end{tikzcd}
\]
\end{itemize}
Thus $m_{\rho_M}(n\cdot z)=(n\cdot z)_{\Sigma}={\bar w}\circ z_{\Sigma}\circ {\bar w}^{-1}={\bar w}\circ m_{\rho_M}(z)\circ {\bar w}^{-1}$. 
So by transport of structure we get the following action on the center of the Hecke algebra:
\[n\cdot \psi={\bar w}\circ \psi \circ {\bar w}^{-1},\quad \forall n\in W_{[\rho_M]}, \forall \psi\in\CZ(\CH_M(M,\rho_M)).\]
 }
Finally, we fix the isomorphism $ \mu_M: \BC[{}^\dag J_M/J_M] \rightarrow  \CZ(\CH(M, \rho_M)) $ as in Theorem \ref{centerhecke} (which is not canonical). We use this isomorphism to give an action of $W_{[\rho_M]}$ on $\BC[{}^\dag J_M/J_M]$ by transport of structure. 
\begin{theorem}\label{Satakegen} 
Assume ${}^\flat M= {}^0 M$. Let $(J_M,\rho_M)\in \CJ_M^{wc}$ and $(J,\rho)$ a $G$-cover. 
We have the following isomorphism of $\BC$-algebras
\[\CZ(\CH(G,\rho))=\CZ(\CH(M,\rho_M))^{W_{[\rho_M]}}\simeq \BC[{}^\dag J_M/J_M]^{W_{[\rho_M]}},\]
where, the first is canonical while the second is not.
\end{theorem}
\begin{proof}

By \cite[Theorem 1.9.1.1]{Roc09} we know that 
\[\fkZ^{\fks} = (\fkZ^{\fkt})^{W_{\fkt}},\]
where, $\fkt:=[M,\pi_M]_M= \fkS( {J_M}, \rho_M)$,  $\fks:=[M,\pi_M]_G$ and $W_{\fkt}=W_{[\rho_M]}$. 

By ${W_{[\rho_M]}}$-equivariance of $m_{[\rho_M]}$, we get a canonical isomorphism
\[\CZ(\CH(G,\rho))= (\CZ(\CH(M,\rho_M)))^{W_{[\rho_M]}}.\]
And finally, we conclude using the ${W_{[\rho_M]}}$-equivariance of $\mu_M$.
\end{proof}

\section{Some nice families of compact open subgroups}

Let $K$ be a compact open subgroup of $G$ and let $\fkR_K(G)$ be the full sub-category of $\fkR(G)$ consisting of representations $(\pi, V)$ that are generated by their $K$-fixed vectors. Write $\CH(G,K)$ for the Hecke algebra $\CH(G,1_K)$, where $1_K$ denotes the trivial representation of $K$.

Let $\bf S$ be a maximal split torus in $\bf G$. In \cite[Section 3.7]{BD84} the authors introduce criteria on $K$, which we call $\heart_S$ and recall now. 
\begin{definition}\label{heart}
Let $K$ be a compact open subgroup of $G$. We say $K$ satisfies $\heart_S$ if
\begin{enumerate}
\item Let $\bf P$ be a parabolic subgroup of ${\bf G}$ that contains $\bf S$. Write  $\bf P = MN$ with Levi component $\bf M$ and unipotent radical $\bf N$. Let $K'$ be a $G$-conjugate of $K$ and let $K'_P = K' \cap P/K'\cap N$. For any parabolic subgroup $\bf Q$ of $\bf G$   with the same Levi subgroup $\bf M$ and any other $G$-conjugate $K_1$ of $K$, $(K_1)_Q$ is a conjugate of $K'_P$ in $M$. 
\item Let $(\sigma, V)$ be a representation of $G$.  Let $V(N) = Span\langle \sigma(n)v - v | v \in V, n \in N \rangle$ and let $V_N = V/V(N)$. Let $V^K$ be the set of $K$-fixed vectors of $V$. Then the canonical map $V^{ K} \rightarrow V_N^{M \cap K}$ is surjective.

\end{enumerate}
\end{definition}
Let $\CK^\heart(S,G)$ be the collection of all compact open subgroups of $G$ that satisfies $\heart_S$. Let us recall the following proposition. 
\begin{proposition} [Corollary 3.9 of \cite{{BD84}}]\label{categories}
Let $\bf S$ be a maximal $F$-split torus in $\bf G$ and let $K \in \CK^\heart(S, G)$. 
The pair $(K,1)$ is $\fkS(K)$-type for a finite set $\fkS(K):= \fkS(K,1) \subset \fkB(G)$ (see Definition \ref{def:type}). 
 \end{proposition}

\begin{lemma}\label{BuProp}
Let $K\in \CK^\heart(S, G)$. We have $\fks = [ M, \sigma]_G \in \fkS(K)$ if and only if $\sigma^{K \cap M} \neq 0$. 
\end{lemma}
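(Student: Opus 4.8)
The statement is essentially a translation of the second part of $\heart_S$ (Definition \ref{heart}(2)) combined with the compatibility of parabolic restriction with induction. Recall that by Proposition \ref{categories}, $\fkR_K(G)$ is the product of the blocks $\fkR^{\fks}(G)$ for $\fks \in \fkS_K$, so $\fks \in \fkS_K$ if and only if $\fkR^{\fks}(G) \subset \fkR_K(G)$, equivalently (since every block is nonzero and generated by its objects' $K$-fixed vectors once one object has one) if and only if some, hence every, nonzero object of $\fkR^{\fks}(G)$ has a nonzero $K$-fixed vector. The plan is to test this on a convenient generator of $\fkR^{\fks}(G)$, namely a parabolically induced representation $\Ind_P^G \sigma'$ with $\sigma'$ in the inertial class $\fks_M = [M,\sigma]_M$ (for a parabolic $\bf P$ with Levi $\bf M$ containing $\bf S$), and to identify $(\Ind_P^G\sigma')^K$ with $(\sigma')_{\ }$ restricted to $M\cap K$ using Definition \ref{heart}(2).

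\textbf{Key steps.} First I would fix a parabolic $\bf P = \bf M \bf N$ containing $\bf S$ with Levi $\bf M$; this is harmless since the inertial class $\fks = [M,\sigma]_G$ is represented by such a pair after conjugation, and $K \cap M$ is well-defined up to $M$-conjugacy by $\heart_S$(1), which is exactly why the statement ``$\sigma^{K\cap M} \neq 0$'' makes sense. Second, for the ``only if'' direction: suppose $\fks \in \fkS_K$. Then $\fkR^\fks(G) \subset \fkR_K(G)$, and in particular the normalized parabolic induction $\Ind_P^G \sigma$ lies in $\fkR_K(G)$ (its subquotients all have inertial support $\fks$), so $(\Ind_P^G\sigma)^K \neq 0$. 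Now apply Frobenius reciprocity together with Definition \ref{heart}(2): the Jacquet module functor $V \mapsto V_N$ applied to $\Ind_P^G \sigma$ has $\sigma$ (twisted by the modulus character, which does not affect nonvanishing of invariants under the compact group $M\cap K$) as a subquotient, and the surjection $(\Ind_P^G\sigma)^K \twoheadrightarrow ((\Ind_P^G\sigma)_N)^{M\cap K}$ forces $\sigma^{M\cap K} \neq 0$; here one uses that the geometric lemma / Bernstein--Zelevinsky filtration of $(\Ind_P^G\sigma)_N$ has $\sigma$ itself as the bottom (or top) piece, so its $M\cap K$-invariants inject into those of the Jacquet module. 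Third, for the ``if'' direction: suppose $\sigma^{K\cap M} \neq 0$. I would show $\Ind_P^G\sigma$ has a nonzero $K$-fixed vector and that $\Ind_P^G\sigma$ generates the block $\fkR^\fks(G)$ (up to the action of $X_\nr(M)$, which permutes the relevant data without changing $K$-invariance since unramified twists are trivial on compact subgroups); concretely, by Definition \ref{heart}(2) applied to $V = \Ind_P^G\sigma$ the map $V^K \to V_N^{M\cap K}$ is surjective, and $V_N = (\Ind_P^G\sigma)_N$ surjects onto $\sigma$ (normalized Jacquet of normalized induction), so $V_N^{M\cap K}$ surjects onto $\sigma^{M\cap K} \neq 0$, giving $V^K \neq 0$. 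Since $\fkR_K(G)$ is closed under subquotients (Proposition \ref{categories}) and $\Ind_P^G\sigma$ has a subquotient with inertial support exactly $\fks$, we get $\fks \in \fkS_K$.

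\textbf{Main obstacle.} The delicate point is the precise bookkeeping in the ``only if'' direction: one must be sure that $\sigma$ (and not merely some twist $\sigma\otimes\nu$ with $\nu$ ramified, which would not be inertially equivalent, or some other constituent of the Jacquet module supported on a smaller Levi) is genuinely a subquotient of $(\Ind_P^G\sigma)_N$ with $M\cap K$-invariants detected by the surjection of $\heart_S$(2). This is where the geometric lemma enters: the constituents of $(\Ind_P^G\sigma)_N$ indexed by the longest Weyl element contribute the ``honest'' $\sigma$-piece, and one checks that the $(M\cap K)$-invariant functor is exact enough (it is, being invariants under a compact group in characteristic zero, hence semisimple) to see this piece. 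A secondary subtlety is handling the normalizing modulus characters $\delta_P^{\pm 1/2}$ throughout --- these are unramified, hence trivial on the compact group $M\cap K$, so they never obstruct the invariance statements, but this should be remarked on explicitly. Finally, one should note that the independence of the statement from the choice of $\bf P$ (and of the $G$-conjugate of $K$) is precisely guaranteed by $\heart_S$(1), so no further argument is needed there.
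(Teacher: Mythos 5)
The paper does not argue this lemma at all: its entire proof is the one-line citation ``The proof given in \cite[Proposition~4]{Bu01} goes through verbatim.'' So there is no route in the paper for you to have matched or deviated from; the relevant question is whether your sketch is actually sound, and here there is a real gap.

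Your ``if'' direction is fine: from $\sigma^{K\cap M}\neq 0$ and the exactness of $(K\cap M)$-invariants you get $\bigl((\Ind_P^G\sigma)_N\bigr)^{K\cap M}\neq 0$ (using that $\sigma$, up to an unramified twist, is a subquotient of the Jacquet module), and then $\heart_S$(2) \emph{does} give $(\Ind_P^G\sigma)^K\neq 0$, whence $\fks\in\fkS_K$ since $\fkR_K(G)$ is closed under subquotients. The ``only if'' direction, however, inverts the logic of $\heart_S$(2). You write that the surjection $(\Ind_P^G\sigma)^K\twoheadrightarrow\bigl((\Ind_P^G\sigma)_N\bigr)^{K\cap M}$ ``forces'' $\sigma^{K\cap M}\neq 0$ once you know the source is nonzero. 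That is backwards: a surjection from a nonzero space onto $B$ tells you nothing about whether $B\neq 0$ (the zero space is a perfectly good quotient). What you would actually need is \emph{injectivity} of the canonical map $V^K\to V_N^{K\cap M}$ for $V=\Ind_P^G\sigma$, i.e. that it is an isomorphism, and $\heart_S$(2) by itself asserts only surjectivity. Injectivity is true here but is a genuinely nontrivial input --- it is a Casselman-type statement requiring admissibility of $V$ together with the analogous surjectivity for the opposite parabolic $\bar P$ and the duality pairing between $V_N$ and $(V^\vee)_{\bar N}$, or equivalently Jacquet's lemma about the action of $\pi(e_K)\pi(a)\pi(e_K)$ for dominant $a$. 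None of this is in your sketch, and without it the ``only if'' implication is not established.

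A cleaner way to run the whole argument, closer to what Bushnell's proof does, avoids the injectivity issue altogether: use the Frobenius adjunction $(\Ind_P^G\sigma)^K\cong\Hom_G(\cind_K^G\mathbf 1,\Ind_P^G\sigma)\cong\Hom_M\bigl((\cind_K^G\mathbf 1)_N,\sigma\bigr)$, and compute $(\cind_K^G\mathbf 1)_N$ directly from the Bruhat--Iwasawa stratification of $G/K$; it is a finite direct sum of $\cind_{K_g\cap M}^M\mathbf 1$'s for various $G$-conjugates $K_g$ of $K$. By $\heart_S$(1) all the $K_g\cap M$ that occur are $M$-conjugate to $K\cap M$ (and the modulus twists are unramified, hence invisible to $K\cap M$), so $(\Ind_P^G\sigma)^K\neq 0\iff\sigma^{K\cap M}\neq 0$ as an honest equivalence, with both directions treated symmetrically. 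If you prefer to keep your Jacquet-module route, you must state and justify the injectivity of $V^K\to V_N^{K\cap M}$ for admissible $V$, as it does not follow formally from the surjectivity postulated in $\heart_S$(2).
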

\begin{proof}
The proof given in \cite[Proposition 4]{Bu01}  goes through verbatim.
\end{proof}
\subsection{Some compact open subgroups that live in $\CK^\heart(S, G)$}
In \cite[Section 5]{BS20}, the following condition is considered in place of $\heart_S$ above. 

\begin{definition}\label{spade} 
 Let $\bf S$ be a maximal $F$-split torus in $\bf G$. Let $K$ be a compact open subgroup of $G$ and let $K^G$ be the set of $G$-conjugates of $K$. We say $K$ satisfies $\spade_S$ if, for any parabolic subgroup $\bf P$ of $\bf G$ that contains $\bf S$, any $P$-conjugacy class of $K^G$ contains a $K'$ that admits an Iwahori decomposition with respect to $P$:
    \[K' = (K'\cap N^-)(K'\cap M)(K'\cap N).\]
\end{definition}
Let $\CK^\spade(S, G)$ be the collection of compact open subgroups of $G$ that satisfy $\spade_S$. It is shown in \cite[Proposition 5.1]{BS20} that Proposition \ref{categories} holds for all $K \in \CK^\spade(S, G)$. 

Let $F_s$ be a separable closure of $F$ and let $\bF$ be the completion of the maximal unramified extension of $F$ contained in $F_s$. Let $\CB(G, F)$ (resp. $\CB({\bf G}_\bF, \bF)$) denote the Bruhat-Tits building of $\bf G$ over $F$ (resp. ${\bf G}_\bF$ over $\bF$). Let $\CA(S,F)$ denote the apartment of $\bf S$ over $F$. For $r \in \BR_{\geq 0}$ and $x \in \CB(G, F)$ let $G_{x,r}$ denote the Moy-Prasad filtration subgroup (see \cite{MP1, MP2}).  By \cite[Proposition 5.2]{BS20}, we have $G_{x,r}\in \CK^\spade(S,G) $ for all $x \in \CA(S,F)$ and $r>0$.

We are interested in compact open subgroups for which Lemma \ref{BuProp} holds, that is in compact open subgroups that lie in $\CK^\heart(S,G)$. Before taking this up, let us recall some preliminaries about filtrations of root subgroups from \cite[Chapter 4 and Chapter 5]{BT2}. 
\subsubsection{Filtration of root subgroups}\label{subsec:filtrations} Recall that we have fixed a valuation $\omega$ on $F$ so that $\omega(F^\times) = \BZ$.   Let $\bf G$ be a connected, reductive group over $F$. Then by \cite{Ste65}, ${\bf G}_{\bF}$  is quasi-split. Let $\sigma$ denote the Frobenius action on ${\bf G}_\bF$ so that ${\bf G} = {\bf G}_{\bF}^\sigma$. Let $\tF$ be the smallest sub-extension of the completion of $F_s$ over which ${\bf G}_{\bF}$ splits. Let $\bf S$ be a maximal $F$-split torus in $\bf G$ and let $\bS$ be a maximal $\bF$-split $F$-torus containing $\bf S $. Let ${\bf T }= Z_{\bf G}(\bS)$. Then ${\bf T}$ is a maximal torus in ${\bf G}$. Let $\bal$ be a $\sigma$-stable alcove in the apartment $\CA(\breve S, \bF)$ and let $\ca = \bal^\sigma$. Then $\ca$ is an alcove in the apartment $\CA(S, F)$. Choose a special vertex $x_0$ in the closure of $  \ca$. Let $\breve \Phi = \Phi( {\bf G}_{\bF}, \bS)$ denote the set of roots of $\bS$ in ${\bf G}_{\bF}$. Similarly we have $\Phi = \Phi(\bf G, \bf S)$. The choice of $x_0$ in the closure of $\ca$ determines a set of simple roots $\breve\Delta$ of $\breve\Phi$ and $\Delta$ of $\Phi$. Let $\breve\Phi^{\nd}$ denote the set of  non-divisible roots of $\breve\Phi$. We similarly have  $\Phi^\nd$. 

Let $W(G,S)$ be the Weyl group of $\bf G$ relative to $\bf S$. Let $W$ be the Iwahori Weyl group of $G$ over $F$. Having chosen the special vertex $x_0$, we may and do identify $W(G,S)$ with the subgroup of $W$ fixing $x_0$ (see \cite[Lemma 3.0.1(1)]{HR10}). In particular $W(G,S)$ acts on $\CA(S,F)$.

Let us first recall the definition of filtration of roots subgroups for $\ba \in \breve \Phi$. We will then prove a lemma that describes the set of jumps. Let ${\bf U}_\ba$ be the root subgroup attached to $\ba$. There are two possibilities.  
\begin{enumerate}
    \item Suppose $ \ba \in \breve \Phi^{\nd}$ is such that $2 \ba$ is not a root. We fix a pinning $(L_\ba, x_\ba)$ as in \cite[Section 4.1.5 and Section 4.1.8]{BT2}. Here $L_\ba \hookrightarrow \tF$ and $x_\ba: {\bf U}_\ba \xrightarrow{\simeq} \Res_{L_\ba/\bF}\BG_a$ is an isomorphism. Let $e_{\ba} = [L_{\ba}:\bF]$. Let $ \Gamma_{\ba}' = \omega(L_\ba^\times) = \frac{1}{e_\ba} \BZ$. The set of affine roots with gradient $\ba$ are of the form $\ba +m$ for $ m \in \Gamma_\ba'$. For $m = \frac{k}{e_\ba} \in \Gamma_{\ba}'$, we have  ${\bf U}_\ba(\bF)_{x_0, m} = x_\ba^{-1}(\fkp_{L_{\ba}}^{k})$ and for any real number $r$ let  ${\bf U}_\ba(\bF)_{x_0, r}={\bf U}_\ba(\bF)_{x_0, m}$ where $m = inf\{h \in \Gamma_\ba'\;|\; h \geq r\}$. 
    \item Suppose $ \ba \in \breve \Phi^{\nd}$ is such that $2 \ba$ is a root. We fix a pinning $(L_{\ba}, L_{2 \ba}, x_\ba)$ as in \cite[Section 4.1.9]{BT2}. Here $L_\ba$ is a quadratic extension of $L_{2\ba}$ with unique non-trivial automorphism $\tau$, $H_0(L_\ba, L_{2 \ba}) = \{(u, v) \in L_\ba \times L_\ba\;|\; v +\tau(v) = u \tau(u)\}$ with multiplication given by \cite[Equation (4) of Section 4.1.9]{BT2}. Let $H(L_\ba, L_{2\ba}) = \Res_{L_{2 \ba}/\bF} H_0(L_\ba, L_{2 \ba})$ and let $x_\ba: {\bf U}_\ba \xrightarrow{\simeq} H(L_\ba, L_{2\ba})$. Let $e_\ba = [L_\ba:F]$ and $e_{2\ba} = [L_{2\ba}:F]$. Note that $e_\ba = 2e_{2\ba}$. As in \cite[Lemma 4.3.3]{BT2}, let $L_\ba = L_{2\ba}[t]$ where $t^2 - \alpha t +\beta=0$.  If $\alpha=0$, set $\lambda = \frac{1}{2}$. If $\alpha \neq 0$, set $\lambda = t\alpha^{-1}$. Let $\Gamma_\ba'$ be the value set attached to the root $\ba$ as in \cite[Section 4.2.20]{BT2}. Note that $\omega(L_\ba^\times) = \frac{1}{e_\ba}\BZ $. Then by \cite[Section 4.3.4]{BT2},
    \begin{align}\label{GammabF}\Gamma_\ba' = \begin{cases} 
  \frac{1}{e_\ba}\BZ & \text{if  $\alpha=0$}\\
 \frac{1}{2e_\ba} +\frac{1}{e_\ba}\BZ & \text{if $\alpha\neq 0$.}
 \end{cases} \end{align}
Let $L_\ba^0$ be the set of elements in $L_\ba$ of trace 0. Then again by \cite[Section 4.3.4]{BT2}, 
$\Gamma_{2\ba}'  = \omega(L_{\ba}^0 \backslash \{0\})$. Let $\gamma =-\frac{1}{2}\omega(\lambda)$. For $m\in \Gamma_\ba'$, let (see \cite[Section 4.3.5]{BT2})
\[{\bf U}_\ba(\bF)_{x_0,m} = \left\{x_\ba(u,v) \in H(L_\ba, L_{2\ba})\;|\; \omega(u)\geq m+\gamma,\;  \omega(v - \lambda u \tau(u)) \geq 2m+ \frac{1}{e_a}\right\}.\]
This definition is extended to $r \in \BR$ as in \cite[Section 4.3.8]{BT2}. 
\end{enumerate}

\begin{lemma}\label{lem:filbF}
Let $\ba \in \breve\Phi^{\nd}$. Let $m \in \frac{1}{e_\ba} \BZ$. Let $r \in \BR$ be such that $0<r<\frac{1}{e_\ba}$ if $2\ba$ is not a root, and such that $0<r<\frac{1}{2e_\ba}$ if $2\ba$ is a root. Then ${\bf U}_{\ba}(\bF)_{x_0, m-r} = {\bf U}_{\ba}(\bF)_{x_0, m}$ and  ${\bf U}_{\ba}(\bF)_{x_0, m+r} = {\bf U}_{\ba}(\bF)_{x_0, m+\frac{1}{e_\ba}}$.
\end{lemma}
\begin{proof}
     
 Write $m = \frac{k}{e_\ba}$ for a suitable $k \in \BZ$.
 
    Let $\ba \in (\breve\Phi)^{\nd}$ be such that $2\ba$ is not a root. Then $\Gamma_\ba' = \frac{1}{e_\ba}\BZ$. We have assumed that $0<r< \frac{1}{e_\ba}$. So ${\bf U}_{\ba}(\bF)_{x_0, m - r} = \fkp_{L_\ba}^{k} = {\bf U}_{\ba}(\bF)_{x_0,m}$ and ${\bf U}_{\ba}(\bF)_{x_0, m+r} = \fkp_{L_\ba}^{k+1} = {\bf U}_{\ba}(\bF)_{x_0,m+\frac{1}{e_\ba}}$.
    
    Next, let  $\ba \in \breve\Phi$ be such that $2\ba$ is a root.  We have a few cases.
\begin{enumerate}
    \item Suppose $\alpha=0$. Then $\lambda = \frac{1}{2}$. We have two subcases. 
    \begin{itemize}
        \item Suppose the residue characteristic of $F$ is not 2. Then $\omega(\lambda) = 0$. We have assumed that $0<r<\frac{1}{2e_\ba}$. Now, ${\bf U}_{\ba}(\bF)_{x_0, m} =\{ x_\ba(u,v) \in H(L_\ba, L_{2\ba})\;|\; \omega(u) \geq \frac{k}{e_\ba}, \; \omega(v - \lambda u \tau(u)) \geq \frac{2k}{e_\ba}+\frac{1}{e_\ba}\}$. Using the facts that 

\[\frac{k}{e_\ba}-\frac{1}{2e_\ba}< \frac{k}{e_\ba}-r<\frac{k}{e_\ba}\] and \[\frac{2k}{e_\ba}< \frac{k}{e_\ba} +\frac{1}{e_\ba} -2r < \frac{2k}{e_\ba}+\frac{1}{e_\ba},\] it follows that ${\bf U}_{\ba}(\bF)_{x_0, m-r} = {\bf U}_{\ba}(\bF)_{x_0, m}$ and that ${\bf U}_{\ba}(\bF)_{x_0, m} = {\bf U}_{\ba}(\bF)_{x_0, m+\frac{1}{e_\ba}}$.
\item Suppose the residue characteristic of $F$ is 2. Note that the characteristic of $F$ is necessarily 0. So $\omega(\lambda) = -\omega(2) = -e_F$, where $e_F$ is the ramification index of $F/\BQ_2$. We have assumed that $0<r<\frac{1}{2e_\ba}$.  We have
\[{\bf U}_\ba(\bF)_{x_0,m} = \left\{x_\ba(u,v) \in H(L_\ba, L_{2\ba})\;|\; \omega(u)\geq \frac{k}{e_\ba}+\frac{e_F}{2},\;  \omega(v - \lambda u \sigma(u)) \geq \frac{2k}{e_\ba}+ \frac{1}{e_\ba}\right\}.\]
. Write $\frac{e_F}{2} = \frac{e_F e_{2\ba}}{e_\ba}$. Then we see that \[\frac{k}{e_\ba}+\frac{e_F e_{2\ba}}{e_\ba} - \frac{1}{e_\ba}< \frac{k}{e_\ba}+\frac{e_F e_{2\ba}}{e_\ba} - r < \frac{k}{e_\ba}+\frac{e_F e_{2\ba}}{e_\ba}\]  and \[\frac{2k}{e_\ba}<\frac{2k}{e_\ba}+\frac{1}{e_\ba} - 2r< \frac{2k}{e_\ba}+\frac{1}{e_\ba}.\] It now again follows that ${\bf U}_{\ba}(\bF)_{x_0, m-r} = {\bf U}_{\ba}(\bF)_{x_0, m}$ and that ${\bf U}_{\ba}(\bF)_{x_0, m+r} = {\bf U}_{\ba}(\bF)_{x_0, m+\frac{1}{e_\ba}}$.
    \end{itemize}
\item Suppose $\alpha \neq 0$. We have assumed $0<r<\frac{1}{2e_\ba}$. Then we have
\[{\bf U}_\ba(\bF)_{x_0,m} = \left\{x_\ba(u,v) \in H(L_\ba, L_{2\ba})\;|\; \omega(u)\geq \frac{k}{e_\ba}-\frac{1}{2}\omega(\lambda),\;  \omega(v - \lambda u \sigma(u)) \geq \frac{2k}{e_\ba}+ \frac{1}{e_a}\right\}.\]
Now, write $\frac{k}{e_\ba}-\frac{1}{2}\omega(\lambda) = \frac{1}{2e_\ba} + \frac{k'}{e_\ba}$. Then, for $0<r<\frac{1}{2e_\ba}$, we have \[ \frac{k'}{e_\ba} <  \frac{1}{2e_\ba} + \frac{k'}{e_\ba}-r <  \frac{1}{2e_\ba} + \frac{k'}{e_\ba} < \frac{k'}{e_\ba} +\frac{1}{e_\ba} \]
and
\[ \frac{2k}{e_\ba} < \frac{2k}{e_\ba} + \frac{1}{e_\ba} -r< \frac{2k}{e_\ba} + \frac{1}{e_\ba}.\]
Now, it is again clear that ${\bf U}_\ba(\bF)_{x_0,m-r} = {\bf U}_\ba(\bF)_{x_0,m}$ and that ${\bf U}_\ba(\bF)_{x_0,m+r} = {\bf U}_\ba(\bF)_{x_0,m+\frac{1}{e_\ba}}$.
    
\end{enumerate}

This finishes the proof of the lemma.
\end{proof}

Next, we recall the definition of the filtration of the root subgroup ${\bf U}_a(F)$ for $a \in \Phi$ (cf. \cite[\S 5.1.16 - 5.1.18]{BT2}). Let $\breve\Phi_a: = \{ \bc \in \breve\Phi\;|\; \bc|_{\bf S} = a \text{ or } 2a\}$. This is a $\s$-stable positively closed subset of $\breve\Phi$; that is if $\bc_1, \bc_2 \in \breve\Phi_a$ are such that $\bc_1+\bc_2$ is a root, then $\bc_1+\bc_2 \in \breve\Phi_a$. For any fixed ordering, the subset
\begin{align}\label{ubr}
    {\bf U}_a(\bF)_{x_0,r} := \prod_{\bc \in \breve\Phi_a, \bc|_{\bf S} = a} {\bf U}_\bc(\bF)_{x_0,r}  \prod_{\bc \in \breve\Phi_a^{\nd}, \bc|_{\bf S} = 2a} {\bf U}_\bc(\bF)_{x_0,2r}
\end{align}
is a subgroup of ${\bf U}_a(\bF)$. Let ${\bf U}_a(F)_{x_0,r} := {\bf U}_a(\bF)_{x_0,r} \cap {\bf U}_a(F)$. Let $\Gamma_a'$ be the value set attached to the root $a$ as in \cite[Section 5.1.16]{BT2}. Let $\ba \in \breve\Phi_a$ be such that $\ba|_{\bf S} = a$. Then by \cite[Proposition 5.1.19]{BT2}, we have $\Gamma_a' = \Gamma_\ba'$. For $a \in \Phi$ and $\ba \in \breve\Phi$ such that $\ba|_{\bf S} = a$, define $e_a := e_\ba$. Note that this definition does not depend on the choice of $\ba$ whose restriction to $\bf S$ is $a$ (see \cite[Section 5.1.15]{BT2}). We have the following lemma.

\begin{lemma}\label{JumpsF} Let $a \in \Phi^{\nd}$ and let $\ba \in \breve\Phi$ such that $\ba|_S = a$. Let $m \in \frac{1}{e_a} \BZ$. Let $r \in \BR$ be such that $0<r<\frac{1}{e_a}$ if $2a$ is not a root, and such that $0<r<\frac{1}{2e_a}$ if $2a$ is a root. Then ${\bf U}_{a}(F)_{x_0, m-r} = {\bf U}_{a}(F)_{x_0, m}$ and  ${\bf U}_{a}(F)_{x_0, m+r} = {\bf U}_{a}(F)_{x_0, m+\frac{1}{e_a}}$.
\end{lemma}
\begin{proof}
   Let $m \in \frac{1}{e_a}\BZ$ and $r$ as above. We need to show that ${\bf U}_a(\bF)_{x_0,m - r} ={\bf U}_a(\bF)_{x_0,m}$. Let $\bc \in \breve\Phi_a$ be such that $\bc|_{\bf S} =a$. Then, since $\Gamma_{\bc}' = \Gamma_a'$ by \cite[Proposition 5.1.19]{BT2} and $e_\bc = e_a$, Lemma \ref{lem:filbF} implies that ${\bf U}_\bc(\bF)_{x_0,m - r} ={\bf U}_\bc(\bF)_{x_0,m}$ and that ${\bf U}_\bc(\bF)_{x_0,m + r} ={\bf U}_\bc(\bF)_{x_0,m+\frac{1}{e_a}}$ . Next, suppose  $\bc \in \breve\Phi_a^{\nd}$ is such that $\bc|_{\bf S} =2a$. Then there exist distinct $\bc_1, \bc_2 \in \breve\Phi$ such that $\bc_1+\bc_2 = \bc$ and $\bc_1|_{\bf S} = \bc_2|_{\bf S} = a$. Further $\Gamma_{\bc_1}' = \Gamma_{\bc_2}' =  \Gamma_{\bc}' = \Gamma_{2a}' =\Gamma_a'$ and $e_{\bc_1} = e_{\bc_2} = e_{\bc} = e_{{2a}} = e_a$. Since we have assumed that $0<2r<\frac{1}{e_a}$, we see that 
   \[2m-\frac{1}{e_a} < 2m-2r< 2m \;\; \text{ and }\;\; 2m < 2m+2r< 2m+\frac{1}{e_a}\]
   So ${\bf U}_{\bc}(\bF)_{x_0, 2m-2r} = {\bf U}_{\bc}(\bF)_{x_0, 2m}$ and similarly, ${\bf U}_{\bc}(\bF)_{x_0, 2m+2r} = {\bf U}_{\bc}(\bF)_{x_0, 2m+\frac{1}{e_a}}$. This proves that ${\bf U}_a(\bF)_{x_0,m - r} ={\bf U}_a(\bF)_{x_0,m}$ and that ${\bf U}_{a}(F)_{x_0, m+r} = {\bf U}_{a}(F)_{x_0, m+\frac{1}{e_a}}$. This finishes the proof of the lemma.
\end{proof}
Let \begin{align}
    \BR_G := \{r \in \BR_{\geq 0}\;|\; r \in \frac{1}{e_a}\BZ \text{ for all } a \in \Phi\}.
\end{align}
\begin{remark}
    Note that $\BN \subset \BR_G$. For example, if $\bf G$ is a connected, reductive group that splits over an unramified extension of $F$, we have $\BR_G=\BN$. If ${\bf G} = \Res_{L/F} {\bf G'}$, then $\BR_G = \frac{1}{e_{L/F}} \BR_{G'}$ where $e_{L/F}$ is the ramification index of $L/F.$
\end{remark}

\begin{proposition}\label{PropIm}
    Let $\ca$ be an alcove in $\CA(S,F)$. Let $x \in \ca$ and let $m\in \BR_G$. Then $G_{x, m} \in \CK^\heart(S,G)$.
\end{proposition}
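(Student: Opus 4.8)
The statement to prove is that for an alcove $\ca$ in $\CA(S,F)$, a point $x \in \ca$, and $r > 0$, the Moy--Prasad subgroup $G_{x,r}$ satisfies $\heart_S$. Since we already know from \cite[Proposition 5.2]{BS20} that $G_{x,r} \in \CK^\spade(S,G)$ for every $x \in \CA(S,F)$ and $r > 0$, the whole task reduces to upgrading $\spade_S$ to $\heart_S$ in the special case where $x$ lies in the open alcove. I would organize the proof around the two clauses of Definition \ref{heart}. Clause (1) — that all the parabolic-quotient subgroups $(K_1)_Q$ are $M$-conjugate — is essentially a consequence of the Iwahori decomposition already furnished by $\spade_S$ together with conjugacy of parabolics with a fixed Levi; indeed if $K'$ and $K_1$ are $G$-conjugates of $G_{x,r}$ each admitting an Iwahori factorization with respect to parabolics $P = MN$ and $Q = MN'$ sharing the Levi $M$, then $K' \cap P / K' \cap N \cong K' \cap M$ and likewise $(K_1)_Q \cong K_1 \cap M$, and one checks these middle factors are $M$-conjugate by tracking the conjugating element and using that $N_G(M)$ permutes the relevant parabolics. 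So the crux is clause (2): the surjectivity of the Jacquet-module map $V^{G_{x,r}} \to V_N^{M \cap G_{x,r}}$.

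\textbf{The heart of the matter: clause (2).} Here the key geometric input is that $x$ lies in the \emph{open} alcove. I would argue as follows. Fix a parabolic $\bf P = MN$ containing $\bf S$, with opposite $\bar N$. By the Iwahori decomposition of $K := G_{x,r}$ with respect to $P$ we have $K = (K \cap \bar N)(K \cap M)(K \cap N)$, and $K \cap N$, $K \cap \bar N$ are the Moy--Prasad subgroups $N_{x,r}$, $\bar N_{x,r}$. The standard criterion (going back to Casselman, Bushnell, and used throughout Bushnell--Kutzko) for the map $V^K \to V_N^{M \cap K}$ to be surjective is: given a vector $v \in V$ fixed by $M \cap K$ whose image in $V_N$ we want to lift, it suffices to exhibit an element $z$ in the center of $M$ (or more generally a suitable element of $M$) that is strictly $P$-contracting — i.e. $z^n (K \cap \bar N) z^{-n}$ shrinks to $1$ and $z^{-n}(K\cap N)z^n$ shrinks — so that averaging $v$ over $K \cap N$ (a finite sum, since $v$ is smooth) and then applying a high power of $z$ produces a $K$-fixed vector with the correct image. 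The role of the alcove hypothesis is to guarantee that the filtration subgroups $N_{x,r}$ and $\bar N_{x,r}$ behave ``symmetrically'' and that the relevant affine-root inequalities defining $K$ at $x$ are \emph{strict}, so that conjugation by a dominant enough $z \in Z(M)$ genuinely opens up and closes down these groups without boundary obstructions. Concretely, for $x$ in the open alcove every affine root $\psi$ with $\psi(x) = r$ would force $x$ onto a wall, so in fact no affine root takes the exact value $r$ at $x$ unless $r$ is a jump, and one can choose $z$ so that all the filtration indices move in the right direction. I would make this precise using the explicit description of $G_{x,r}$ in terms of $U_\alpha(F)_{x,r} = U_\alpha(F)_{\psi}$ for affine roots $\psi = \alpha + n$ with $\psi(x) \geq r$.

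\textbf{Execution order.} First I would recall the reduction: $\heart_S$ for $G_{x,r}$ follows once we verify clauses (1) and (2), and $\spade_S$ (known) gives us Iwahori decompositions for all $G$-conjugates with respect to all parabolics containing $\bf S$. Second, dispatch clause (1) using the Iwahori factorization plus transitivity of $M$-conjugation on parabolics with Levi $\bf M$. Third, set up clause (2): fix $\bf P = \bf{MN}$, pick $v \in V$ with $\bar v \in V_N$ in the image and $v$ fixed by $M \cap K$ after replacing $v$ by its average over the (open compact) group $N_{x,r}$, which changes nothing in $V_N$. Fourth, choose $z \in Z(M)$ strictly dominant with respect to $P$ and use the open-alcove hypothesis to show $z^m N_{x,r} z^{-m} \supseteq$ (any prescribed $N_{y,s}$) and $z^{-m}\bar N_{x,r} z^m \subseteq \bar N_{x,r}$ for $m \gg 0$; fifth, conclude that $\pi(z)^m v$ — or rather $\pi(e_{N_{x,r}})\pi(z)^m v$ for a suitable idempotent — is $K$-fixed and maps to a nonzero multiple of $\bar v$, giving surjectivity. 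I expect the \textbf{main obstacle} to be Step four: proving the precise contraction/expansion behavior of the unipotent Moy--Prasad groups under conjugation by a central element of $M$, and pinning down exactly where ``$x$ in the open alcove'' (as opposed to a general boundary point) is used — this is precisely the point that fails in Example \ref{CounterEg}, so the argument must genuinely exploit that at an interior point the defining inequalities are not saturated on the walls bounding $N$ and $\bar N$. Everything else is bookkeeping with Iwahori factorizations and smoothness of $V$.
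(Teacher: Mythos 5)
Your proposal has the roles of the two clauses of Definition~\ref{heart} reversed, and this is not a minor slip: it is exactly the content of the proposition.

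You assert that clause~(1) is ``essentially a consequence of the Iwahori decomposition already furnished by~$\spade_S$'' and that one merely ``tracks the conjugating element and uses that $N_G(M)$ permutes the relevant parabolics.'' This is false, and Example~\ref{CounterEg} in the paper is precisely a counterexample to this reasoning. There, $G = \GL_3$, $x=e_1^\vee/2$ is a non-special boundary point, $K = G_{x,1}$ and $K_1 = nKn^{-1}$ with $n \in N_G(S)$ both have perfectly good Iwahori factorizations with respect to $P = MN$ (so $\spade_S$ holds), and yet $K \cap M$ and $K_1 \cap M$ are \emph{not} $M$-conjugate (they have different indices in the Iwahori of $M$). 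So $\spade_S$ does not imply clause~(1), and the conjugating element $n$ (which normalizes $M$) does not reduce $K_1 \cap M$ to an $M$-conjugate of $K \cap M$. Clause~(1) is the actual crux, and it is where the alcove hypothesis is used. The paper's proof is entirely devoted to it: one reduces, via the Iwasawa decomposition, to comparing $G_{w(x),r} \cap M$ with $G_{x,r}\cap M$ for $w$ in the Weyl group; one then writes $w = w_1 w_2$ with $w_1 \in W_\theta$ and $w_2^{-1}(\theta) > 0$, disposes of $w_1$ by $M$-conjugation, and proves the genuine \emph{equality} $G_{w_2(x),r} \cap M = G_{x,r}\cap M$ root subgroup by root subgroup. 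The alcove hypothesis enters at the end: one needs $(w_2^{-1}\cdot a - a)(x - x_0) < e_a$, and this strict inequality fails exactly when $x$ lies on a wall (which is what breaks in the example). Your sketch has no mechanism for detecting this obstruction.

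Conversely, you devote all the real work to clause~(2), attributing the need for the alcove hypothesis to the strict-contraction argument there. But clause~(2) — surjectivity of $V^K \to V_N^{K \cap M}$ — is the standard Jacquet-module consequence of an Iwahori factorization, and since $\spade_S$ (which holds for \emph{all} $x \in \CA(S,F)$ and $r>0$, by \cite[Proposition 5.2]{BS20}) already supplies those factorizations, clause~(2) holds with no constraint on $x$ at all. The paper accordingly opens its proof with ``We only need to verify that $G_{x,r}$ satisfies~(1).'' So the contraction/averaging argument you outline, while correct in itself, is not where the alcove hypothesis lives, and Example~\ref{CounterEg} is a counterexample to your clause~(1) sketch, not a test of your clause~(2) argument. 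To repair the proposal you would need to abandon the ``track the conjugating element'' heuristic for clause~(1) and replace it with an explicit affine-root computation of $G_{w(x),r}\cap M$, which is the computation the paper actually carries out.
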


\begin{proof}
 We only need to verify that $G_{x,m}$ satisfies (1) of Definition \ref{heart}.  Let $N$ be the normalizer of $S$ in $G$. Then, using the Iwasawa decomposition, we know that $g G_{x,m}g^{-1}$ is $P$-conjugate to $n G_{x,m}n^{-1}$ for a suitable $n$ in $N$. But $n G_{x,m} n^{-1} = G_{n(x), m}$. So, we only need to verify that $G_{n(x),m} \cap M$ is $M$-conjugate to $G_{x,m} \cap M$.   We may and do assume that $M = M_\theta$ for a suitable $\theta \subset \Delta$. Let $\Phi_\theta$ be the set of roots in $\Phi(G,S)$ that lie in the $\BQ$-span of $\theta$. We accordingly have $\Phi^+_\theta$ and $\Phi^-_\theta$.  Then $W_\theta = \langle s_a\;|\; a \in \theta\rangle = W(M,S)$. Every element $w \in W(G,S)$ can be written as $w_1w_2$ where $w_1 \in W_\theta$ and $w_2^{-1}(\theta)>0$. 

To prove (2), it suffices to show that $G_{w_1w_2\cdot x, m} \cap M$ is $M$-conjugate to $G_{x,m} \cap M$. Since $w_1 \in W_\theta$, we see that $G_{w_1w_2\cdot x, m}\cap M$ is $M$-conjugate to $G_{w_2\cdot x,m}\cap M$. Hence we only need  to show that $G_{w_2\cdot x, m}\cap M = G_{x,m}\cap M$. Since $G_{x,m} = \langle T_m, {\bf U}_{a}(F)_{x,m}\;|\; a \in \Phi_\theta\rangle$, it suffices show that \begin{align}\label{propclaim}
    {\bf U}_{a}(F)_{ w_2\cdot x, m} = {\bf U}_{a}(F)_{x,m}\;\; \forall a \in \Phi_\theta. 
\end{align} But ${\bf U}_{a}(F)_{x, m}={\bf U}_{a}(F)_{ x_0, m - a(x-x_0)}$ and $ {\bf U}_{a}(F)_{ w_2\cdot x, m} = {\bf U}_{a}(F)_{x_0, m - w_2^{-1}(a)(x-x_0)}$. 

Let $s_a = \inf\{s \in \Gamma_a'\;|\; s>0\}$. Consider the affine linear functional $\psi_{a, r}:y \rightarrow a(y-x_0) +r$ for $a \in \Phi, r \in \BR$. This is an affine root precisely when $r \in \Gamma_a'$ by \cite[Proposition 4.2.22 and Theorem 5.1.20]{BT2}. Having chosen $\ca$ and $x_0$, we see that for $a \in \Phi^+$, $\psi_{a, 0}$ and $\psi_{-a, s_a}$ are both positive affine roots. Since $x \in \ca$, we see that $0<a(x - x_0)<s_a$. 

Note that $\Gamma_{w_2^{-1} a}' = \Gamma_a'$. For $a \in \Phi_\theta^+$, since $w_2^{-1}(a)$ is 
 positive, we have $\psi_{w_2^{-1}(a), 0}$ and $\psi_{-w_2^{-1}(a), s_a}$ are also positive affine roots, so $0<w_2^{-1}(a)(x - x_0)<s_a$.

To prove \eqref{propclaim}, we need to show  that for $a \in \Phi_\theta^+$, \begin{align}\label{Prop:Eq1}{\bf U}_{a}(F)_{ x_0, m - a(x-x_0)} = {\bf U}_{a}(F)_{x_0, m - w_2^{-1}(a)(x-x_0)} = {\bf U}_{a}(F)_{x_0, m},\end{align} and that for $a \in \Phi_\theta^-$, \begin{align}\label{Prop:Eq2} {\bf U}_{a}(F)_{ x_0, m - a(x-x_0)} = {\bf U}_{a}(F)_{x_0, m - w_2^{-1}(a)(x-x_0)} = {\bf U}_{a}(F)_{x_0, m+\frac{1}{e_a}}.\end{align} We see that \eqref{Prop:Eq1} and \eqref{Prop:Eq2}  would follow from Lemma \ref{JumpsF} as soon as we show that for $a \in \Phi_\theta^+ \cap \Phi_\theta^{\nd}$, $a(x-x_0), w_2^{-1}(a)(x-x_0) \in (0, \frac{1}{e_a})$ if $2a$ is not a root and that $ a(x-x_0), w_2^{-1}(a)(x-x_0) \in (0,\frac{1}{2e_a})$ if $2a$ is a root. We have a few cases.
\begin{itemize}
    \item Suppose $a \in \Phi_\theta^{\nd} \cap \Phi_\theta^+$ is such that $2a$ is not a root. Then $\Gamma_a' = \frac{1}{e_a}\BZ$, so $s_a = \frac{1}{e_a}$. So $a(x - x_0), w_2^{-1}(a)(x-x_0) \in (0, \frac{1}{e_a})$.
    \item Suppose $a \in \Phi_\theta^{\nd} \cap \Phi_\theta^+$ is such that $2a$ is a root and such that $\breve\Phi_a^{\nd} = \breve\Phi_a$. This means that there exist distinct $\ba_1, \ba_2 \in \breve\Phi_a$ such that $\ba_1|_{\bf S} = \ba_2|_{\bf S} = a$. Then $\Gamma_a' = \Gamma_{2a}' =\frac{1}{e_a}\BZ$, so $s_a = s_{2a} = \frac{1}{e_a}$. So $a(x - x_0), w_2^{-1}(a)(x-x_0)\in (0, \frac{1}{e_a})$ and $2a(x - x_0),  w_2^{-1}(2a)(x-x_0)\in (0, \frac{1}{e_a})$. In particular, $a(x - x_0),  w_2^{-1}(a)(x-x_0) \in (0, \frac{1}{2e_a})$. 
    \item Suppose $a \in \Phi_\theta^{\nd} \cap \Phi_\theta^+$ is such that $2a$ is a root and such that $\breve\Phi_a^{\nd} \subsetneq \breve\Phi_a$. Let $\ba \in \breve\Phi_a$ be such that $\ba|_{\bf S} = a$ and $2\ba$ is a root.  As recalled in subsection \ref{subsec:filtrations}, let $L_\ba = L_{2\ba}[t]$ where $t^2 - \alpha t+\beta=0$.
    \begin{itemize}
    \item Suppose $\alpha =0$. By \eqref{GammabF}, $\Gamma_\ba' = \frac{1}{e_\ba}\BZ$. Further, $\Gamma_{2\ba}' = \omega(L_\ba^0 \backslash \{0\}) = \frac{1}{e_\ba} +\frac{2}{e_\ba}\BZ$, so $s_\ba = s_{2\ba} = \frac{1}{e_\ba}$. Noting that $e_a = e_\ba$, we see that $a(x - x_0), w_2^{-1}(a)(x-x_0)\in (0, \frac{1}{e_a})$ and $2a(x - x_0),  w_2^{-1}(2a)(x-x_0)\in (0, \frac{1}{e_a})$. In particular, $a(x - x_0),  w_2^{-1}(a)(x-x_0) \in (0, \frac{1}{2e_a})$. 
\item Suppose $\alpha \neq 0$. Then $\Gamma_\ba' = \frac{1}{2e_\ba} +\frac{1}{e_\ba}\BZ$. So $s_\ba = \frac{1}{2e_\ba}$. Further, $\Gamma_{2\ba}' = \omega(L_\ba^0 \backslash \{0\}) = \frac{2}{e_\ba}\BZ$. Noting that $e_a = e_\ba$, we see that $a(x - x_0), w_2^{-1}(a)(x-x_0)\in (0, \frac{1}{2e_a})$.
\end{itemize}
\end{itemize}
We have proved \eqref{propclaim}. This finishes the proof of the proposition.
\end{proof}

\subsection{Some compact open subgroups that don't live in $\CK^\heart(S,G)$}\label{CounterEg} It is shown in \cite[Section 5]{BS20}, that for each $x\in\CA(S,F)$ and each $r>0$, $G_{x,r} \in \CK^\spade(S,G)$. In this subsection, we give examples of $G_{x,r}$'s that do not lie in $\CK^\heart(S,G)$. In the proof of Proposition \ref{PropIm}, we had used crucially that if $x$ is not already a special point, then it lies in an alcove \textit{and} that $r \in \BR_G$ for the argument to go through. This suggests how to look for points $x \in \CA(S,F)$ and $r>0$ for which $ G_{x,r} \notin \CK^\heart(S,G)$.

\subsubsection{Example} Let $G = \GL_3$ with the diagonal matrices as $T$ and upper-traingular matrices as $B$. With this choice, let $\Delta = \{e_1-e_2, e_2-e_3\}$. Let $M = \GL_1 \times \GL_2$. Then $\theta = \{e_2-e_3\}$. Let $a = e_2-e_3$, let $w = s_{e_1-e_2}$ and let $x=e_1^*/2$. Then
\[
 G_{x,1} = 
  \left[ {\begin{array}{ccc}
   1+\fkp_F & \fkp_F &\fkp_F \\
   \fkp_F^2 & 1+\fkp_F & \fkp_F \\
   \fkp_F^2 & \fkp_F &1+\fkp_F\\
  \end{array} } \right].
\]
With 
 \[
 n = 
  \left[ {\begin{array}{ccc}
   0 & 1 &0 \\
   1 & 0 & 0 \\
   0& 0 &1\\
  \end{array} } \right]
\]
we know that $n$ is a representative of $w$ in $\GL_3$. Now, 
\[
n G_{x,1}n^{-1} = 
  \left[ {\begin{array}{ccc}
   1+\fkp_F & \fkp_F^2 &\fkp_F \\
   \fkp_F & 1+\fkp_F & \fkp_F \\
   \fkp_F & \fkp_F^2 &1+\fkp_F\\
  \end{array} } \right].
\]
Then
\[
 G_{x,1} \cap M = 
  \left[ {\begin{array}{ccc}
   1+\fkp_F & 0 & 0 \\
   0 & 1+\fkp_F & \fkp_F \\
   0 & \fkp_F &1+\fkp_F\\
  \end{array} } \right]
\]
and 
\[
n G_{x,1}n^{-1} \cap M = 
  \left[ {\begin{array}{ccc}
   1+\fkp_F & 0 & 0 \\
   0 & 1+\fkp_F & \fkp_F \\
   0 & \fkp_F^2 &1+\fkp_F\\
  \end{array} } \right].
\]
We claim that $nG_{x,1}n^{-1} \cap M$ and $G_{x,1} \cap M$ are \textbf{not} $M$-conjugate. To see this, it suffices to prove that the groups 
\[
  K_1= \left[ {\begin{array}{cc}
    1+\fkp_F & \fkp_F \\
  \fkp_F &1+\fkp_F\\
  \end{array} } \right] \text{ and } I_1= \left[ {\begin{array}{cc}
   1+\fkp_F & \fkp_F \\
   \fkp_F^2 &1+\fkp_F\\
  \end{array} } \right]
\]
are not $\GL_2$-conjugate. This is intuitively clear since $K_1 \subset GL_2(\fkO_F)$ which corresponds to the parahoric subgroup of a hyperspecial vertex in the building and $I_1 \subset I = \left[ {\begin{array}{cc}
   \fkO_F^\times & \fkO_F \\
   \fkp_F &\fkO_F^\times\\
  \end{array} } \right]$ which is an Iwahori subgroup of $\GL_2(F)$ and corresponds to the parahoric subgroup of an alcove.   We justify this as follows. Normalize the Haar measure on $\GL_2(F)$ so that $\vol(I)=1$. Note that $K_1$ and $I_1$ are both normal subgroups of $I$ and $I_1 \subsetneq K_1 \subsetneq I$. Now, if $K_1$ and $I_1$ are $\GL_2$-conjugate, then $\vol(K_1) = \vol(I_1)$, which then implies that $[I:K_1] = \vol(K_1)^{-1} = \vol(I_1)^{-1} =[I:I_1]$, which is not possible. This proves that with $x=e_1^*/2$, $G_{x,1} \notin \CK^\heart(S, G)$.  \\
\subsubsection{Example}  Let $G = \GL_4$ with the diagonal matrices as $T$ and upper-traingular matrices as $B$. With this choice, let $a_i = e_i - e_{i+1}, \; 1 \leq i \leq 3$. Then $\Delta = \{a_1, a_2, a_3\}$. Let $M = \GL_2 \times \GL_2$. Then $\theta = \{a_1, a_3\}$. Let $w = s_{a_2}$ and let $\displaystyle{x=\frac{3a_1^\vee +4a_2^\vee +3a_3^\vee}{8}}$. Then $x$ is the barycenter of the alcove $\ca$ whose bounding hyperplanes are given by the affine roots $a_1, a_2, a_3, 1- (a_1 +a_2 +a_3)$. Let $r=3/8$. Then
\[
 G_{x,3/8} = 
  \left[ {\begin{array}{cccc}
   1+\fkp_F & \fkp_F &\fkO_F& \fkO_F\\
   \fkp_F & 1+\fkp_F & \fkp_F & \fkO_F\\
   \fkp_F & \fkp_F &1+\fkp_F& \fkp_F \\
   \fkp_F^2&\fkp_F & \fkp_F &1+\fkp_F \\
  \end{array} } \right].
\]
With 
 \[
 n = 
  \left[ {\begin{array}{cccc}
   1&0 & 0 &0 \\
   0& 0 & 1 & 0 \\
   0&1&0 &0\\
   0&0&0&1
  \end{array} } \right]
\]
we know that $n$ is a representative of $w$ in $\GL_4$. Now, 
\[
n G_{x,3/8}n^{-1} = 
  \left[ {\begin{array}{cccc}
   1+\fkp_F & \fkO_F &\fkp_F& \fkO_F\\
   \fkp_F & 1+\fkp_F & \fkp_F & \fkp_F\\
   \fkp_F & \fkp_F &1+\fkp_F& \fkO_F \\
   \fkp_F^2&\fkp_F & \fkp_F &1+\fkp_F \\
  \end{array} } \right].
\]
Then
\[
 G_{x,3/8} \cap M = 
  \left[ {\begin{array}{cccc}
   1+\fkp_F & \fkp_F &0& 0\\
   \fkp_F & 1+\fkp_F & 0 & 0\\
   0 & 0 &1+\fkp_F& \fkp_F \\
   0 &0 & \fkp_F &1+\fkp_F \\
  \end{array} } \right].
\]
and 
\[
n G_{x,3/8}n^{-1} \cap M = 
  \left[ {\begin{array}{cccc}
   1+\fkp_F & \fkO_F &0& 0\\
   \fkp_F & 1+\fkp_F & 0 & 0\\
   0 & 0 &1+\fkp_F& \fkO_F \\
   0 &0 & \fkp_F &1+\fkp_F \\
  \end{array} } \right].
\]
 Clearly, $G_{x, 3/8} \cap M$ and $n G_{x, 3/8} n^{-1} \cap M$ are not $M$-conjugate.
\section{The Bernstein center at deeper level}\label{BC}
Let $F$ be a non-archimedean local field and let $\bf G$ be a connected, reductive group over $F$. We assume the following.
\begin{assumption}\label{Fin}
$\bf G$ splits over a tamely ramified extension of $F$, and the residue characteristic $p$ of $F$ does not divide the order of the Weyl group of $\bf G$.
\end{assumption} 
\begin{enumerate}
    \item By \cite[Theorem 8.1]{Fin21} Every irreducible, supercuspidal representation of $G$ arises from Yu's construction, that was recalled in Section \ref{YuS}.
    \item Let $(\pi,V)$ be an irreducible, smooth representation of $G$ and let $\fks=[M,\sigma]_G$ be the inertial class of $\pi$. Let $(J_M, \rho_M)$ be a supercuspidal type of the Bernstein block corresponding to $\fks_M=[M,\sigma]_M$. Then there exists a $G$-cover $(J, \rho)$ of  $(J_M, \rho_M)$, which in particular says that $(J,\rho)$ is an $\fks$-type. This construction is carried out in \cite[Theorem 9.1]{KY17} under some additional hypothesis, but in \cite[Theorem 7.12]{Fin21}, the author has proved that this holds merely under Assumption \ref{Fin}.
\end{enumerate}

For the remainder of this section we assume that Assumption \ref{Fin} holds. We now recall Yu's construction of supercuspidal representations.

\subsection{Yu's construction of supercuspidal representations and a corollary}\label{YuS}  The Yu datum consists of a 5-tuple $(\vec{\bG}, y, \vec{r}, \rho_{-1}, \vec{\phi})$ where
\begin{enumerate}[\textbf{D}1:]
    \item $\vec{\bG} = (\bG^0, \cdots, \bG^d)$ is a tower of algebraic subgroups of $\bG$, $$\bG^0 \subsetneq \cdots \subsetneq \bG^d = \bG$$ such that $Z(\bG^0)/Z(\bG)$ is anisotropic over $F$ and  $\vec{\bG}$ is a tamely ramified twisted Levi sequence in $\bG$ in the section of \cite[Section 1]{Yu01}. In particular, $\bG^i \otimes F^t$ is split and is a Levi factor of a parabolic subgroup of $\bG\otimes F^t$.
    \item $y$ is a point in $\CB(\bG, F) \cap \CA(\bG, T, E)$ where $\bT$ is a maximal torus in $\bG^0$, $E$ is a Galois tamely ramified splitting extension of $T$ and hence of $\vec{\bG}$.
    \item $ \vec{r} = (r_0, \cdots, r_d)$ is a sequence of real numbers satisfying $0<r_0 \cdots <r_{d-1} \leq r_d$ if $d>0$ and $0 \leq r_0$ if $d=0$.
    \item $\rho_{-1}$ is an irreducible representation of $K^0 = G^0_{[y]}$ such that $\rho_{-1}|_{G^0_{y, 0+}}$ is 1-isotypic and the compactly induced representation $\pi_{-1} = \ind_{K^0}^{G^0} \rho_{-1}$ is irreducible, supercuspidal. Here $[y]$ is the projection of $y$ on the reduced building and $G^0_{[y]}$ is the subgroup of $G$ fixing $[y]$. 
    \item $\vec{\phi} = (\phi_0, \cdots, \phi_d)$ is such that each $\phi_i$ is a quasi-character of $G^i$ for each $i$. We assume that $\phi_i$ is trivial on $G^i_{r_i, +}$ but not on $G^i_{r_i}$ that is, that $depth(\phi_i) = r_i$ for $0 \leq i \leq d-1$. Here that we have used the convention $G^i_{r_i} = G^i_{y, r_i}$ and similarly for $G^i_{r_i+}$. If $r_{d-1}<r_d$ we assume that $\phi_d$ is trivial on $G^d_{r_d, +}$ but not on $G^d_{r_d}$, otherwise assume that $\phi_d=1$. We also assume that $\phi_i$ is $G^{i+1}$ generic (see \cite[Section 9]{Yu01}. 

\end{enumerate}
Starting with such a datum, Yu's construction gives a supercuspidal representation of depth $r_d$. Let us summarize this construction. Let $K^0_+ = G^0_{0+}$ and for $1 \leq i \leq d$, let $s_i = r_i/2$ and let 
\[K^i = K^0 G^1_{s_0} \cdots G^i_{s_{i-1}}\] and
\[ K^i_+ = K^0_+ G^1_{s_0+} \cdots G^i_{s_{i-1}+}\]
Yu also defines subgroups $J^i, J^i_+, 1 \leq i \leq d$ as follows. 
Let
\[J^i = \bG(F) \cap \langle {\bf U}_\alpha(E)_{y, r_{i-1}}, {\bf U}_\beta(E)_{y, r_{i-1}/2} \;|\; \alpha \in \Phi(\bG^{i-1}, T, E) \cup \{0\}, \beta \in \Phi(\bG^{i}, T, E) \backslash \Phi(\bG^{i-1}, T, E)\rangle\]
and 
\[J^i_+ =\bG(F) \cap \langle {\bf U}_\alpha(E)_{y, r_{i-1}}{\bf U}_\beta(E)_{y, r_{i-1}/2+} \;|\; \alpha \in \Phi(\bG^{i-1}, T, E) \cup \{0\}, \beta \in \Phi(\bG^{i}, T, E) \backslash \Phi(\bG^{i-1}, T, E)\rangle.\]
For $1 \leq i \leq d$, we have
\[K^{i-1} J^i = K^i,\;\; K^{i-1}_+ J^i_+ = K^i_+.\]
Yu's construction of the supercuspidal representation of $G$ from this data is done inductively and includes the following steps.
\begin{enumerate}[(a)]
    \item In \cite[Section 11]{Yu01}, for $0 \leq i \leq d-1$, Yu constructs an irreducible representation $\tilde\phi_{i-1}$ of $K^{i-1} \ltimes J^{i}$ using the character $\phi_{i-1}$ of $G^{i-1}$, that satisfies condition $ \textbf{SC2}_i$ in \cite[Section 4, Page 592]{Yu01}. Let us recall this construction. Let $\hat\phi_{i-1}$ be the character of $K^0G^{i-1}_0\bG(F)_{y, s_{i-1}+}$ as in \cite[Section 4, Page 591]{Yu01}. Using $\hat \phi_{i-1}$, he defines a non-degenrate $\BF_p$-valued pairing on the $\BF_p$-vector space $J^i/J^i_+$ making $J^i/J^i_+$ a symplectic space over $\BF_p$. Let $(J^i/J^i_+)^\#$ be the Heisenberg group of $J^i/J^i_+$. Yu constructs a canonical isomorphism 
\[j: J^i/(J^i_+ \cap \ker(\hat \phi_{i-1})\rightarrow (J^i/J^i_+)^\#\] in \cite[Proposition 11.4]{Yu01}.  Note that $K^i$  act on $J^i/J^i_+$ by conjugation and this gives a homomorphism from $K^i \rightarrow \text{Sp}(J^i/J^i_+)$. Let $\tilde\phi_{i-1}$ be the pull back of the Weil representation of $\text{Sp}(J^i/J^i_+) \ltimes (J^i/J^i_+)^\#$ via the map $K^{i-1} \ltimes J^i \rightarrow \text{Sp}(J^i/J^i_+) \ltimes (J^i/J^i_+)^\#$. He shows in \cite[Theorem 11.5]{Yu01} that $\tilde\phi_{i-1}|_{J^i_+}$  is $\hat\phi_{i-1}|_{J^{i}_+}$-isotypic and that the restriction of $\tilde\phi_{i-1}$ to $K^{i+1}_+$ is $1$-isotypic.
    \item Next, Yu constructs a representation $\rho_i'$ of $K^i$ such that $\rho_i'|_{G^i_{r_i}}$ is $1$-isotypic. He then sets $\rho_i= \rho_i' \otimes \phi_i|_{K^i}$. First, put $\rho_0' = \rho_{-1}$ and $\rho_0 = \rho_0' \otimes (\phi_0|_{K^0})$.  Now suppose that $\rho_{i-1}'$ and $\rho_{i-1}$ have already been constructed. Inflate $\phi_{i-1}|_{K^{i-1}}$ to a representation $\mathrm{inf}(\phi_{i-1})$ of $K^{i-1} \ltimes J^i$. He shows that the representation $\mathrm{inf}(\phi_{i-1}) \otimes \tilde\phi_{i-1}$ factors through the natural map $K^{i-1} \ltimes J^i \rightarrow K^{i-1}J^i =K^i$. Let $\phi_{i-1}'$ be the representation of $K^{i}$ whose inflation to $K^{i-1} \ltimes J^i$ is $\mathrm{inf}(\phi_{i-1}) \otimes \tilde\phi_{i-1}$.  Inflate $\rho_{i-1}'$ to a representation $\mathrm{inf}(\rho_{i-1}')$ of $K^i = K^{i-1} J^i$ via the map $K^i\rightarrow K^{i-1}J^i/J^i = K^{i-1}/K^{i-1} \cap J^i$ (This can be done because $\rho_{i-1}'$ restricted to $K^{i-1} \cap J^i$ is $1$-isotypic). Set $\rho_i' = \mathrm{inf}(\rho_{i-1}') \otimes \phi_{i-1}'$ and $\rho_i = \rho_i' \otimes (\phi_i|_{K^i})$. 
    \item The main theorem of Yu's paper \cite{Yu01} says that the compactly induced representation $\pi_i = \ind_{K^i}^{G^i} \rho_i$ of $G^i$ is irreducible and supercuspidal of depth $r_i$, $0 \leq i \leq d$. We note that the proof of this main theorem in Yu's paper relied on some propositions in literature that were later noted to be false. Recently, Fintzen gave an alternate proof of the main theorem in \cite{Fin21F}.    \end{enumerate}

Let $^0K^0 = G^0_y$ and $^0K^i = (^0K^0) G^1_{s_0} \cdots G^i_{s_{i-1}}$. Let $^0\rho_i$ be an irreducible summand of $\rho_i|_{^0K^i}$. As noted in \cite[Corollary 15.3]{Yu01}, we have that $(^0K^i, ^0\rho_i)$ is a $[G^i, \pi_i]_{G^i}$-type for $0 \leq i \leq d$.

\begin{lemma}\label{LemmaonYu} For each $0\leq i \leq d$, 
\begin{enumerate}
\item Let  $^0\rho_i$ be any irreducible summand of $\rho_i|_{^0K^i}$. Then every $g \in G^i$ that intertwines $^0\rho_i$ lies in $K^i$, and
\item $m_{^0K^0}(\rho_{-1}) = m_{^0K^i}(\rho_i).$
    
    \end{enumerate}
\end{lemma}
\begin{proof}  (1) is a consequence of \cite[Corollary 15.5]{Yu01}, whose corrected proof can be found in \cite[Proposition 4.4]{Oha21}.  In more detail, it is shown in loc. cit that if $g \in G^i$ intertwines $^0\rho_i$ then $g \in (^0K^i) G^0 (^0K^i)$ and that $g \in G^0$ intertwines $^0\rho_i$, then $g \in K^0$.  Hence $g \in G^i$ intertwines $^0\rho_i$, then $g \in (^0K^i)(K^0) (^0K^i)$. By definition $(K^0) (^0K^i) = K^i$ and clearly $(^0K^i)(K^i) = K^i$, hence (1) follows.

Let us prove (2). Recall that for $0 \leq i \leq d$,  $\rho_i = \rho_i' \otimes (\phi_i|_{K^i})$, so $\rho_i|_{^0K^i} = \rho_i'|_{^0K^i}\otimes (\phi_i|_{^0K^i})$.  Hence $m_{^0K^i}(\rho_i)= m_{^0K^i}(\rho_i')$.  To prove (2), it suffices to show that $m_{^0K^i}(\rho_i')=m_{^0K^{i-1}}( \rho_{i-1}')$. Note that $^0K^i = ^0K^{i-1}J^i$, and hence under the map \begin{align}\label{inflation}
    K^i\rightarrow K^{i-1}J^i/J^i = K^{i-1}/K^{i-1} \cap J^i,
\end{align} we have \begin{align}\label{inflation0}
    ^0K^i\rightarrow ^0K^{i-1}J^i/J^i = ^0K^{i-1}/^0K^{i-1} \cap J^i.
\end{align} 
Recall that $\mathrm{inf}(\rho_{i-1}')$ is the inflation of $\rho_{i-1}'$ to $K^i = K^{i-1} J^i$ via \eqref{inflation}. So the inflation  of $\rho_{i-1}'|_{^0K^{i-1}}$ to a representation of ${}^0 K^i$ via \eqref{inflation0}, denoted $\mathrm{inf}(\rho_{i-1}'|_{^0K^{i-1}})$,  is  precisely $\mathrm{inf}(\rho_{i-1}')|_{^0K^i}$ as representations of $^0K^i$. In particular,  \[\rho_i'|_{^0K^i} = \mathrm{inf}(\rho_{i-1}')|_{^0K^i} \otimes (\phi_{i-1}'|_{^0K^i}) = \mathrm{inf}(\rho_{i-1}'|_{^0K^{i-1}}) \otimes (\phi_{i-1}'|_{^0K^i}).\]
Next, we observe  that $(\phi_{i-1}'|_{^0K^i})$ is irreducible. This is in fact clear from Yu's construction of $\tilde\phi_{i-1}$;  in more detail,  Note that $^0K^i$  also acts on $J^i/J^i_+$ by conjugation and this gives a homomorphism from $^0K^i \rightarrow \text{Sp}(J^i/J^i_+)$.  Let $^0\tilde\phi_{i-1}$ be the pull back of the Weil representation of $\text{Sp}(J^i/J^i_+) \ltimes (J^i/J^i_+)^\#$ via the map $^0K^{i-1} \rtimes J^i \rightarrow \text{Sp}(J^i/J^i_+) \rtimes (J^i/J^i_+)^\#$. Then clearly, $(\tilde\phi_{i-1}|_{^0K^i}) = ^0\tilde\phi_{i-1}$ is irreducible. This proves that 
$\phi_{i-1}'|_{^0K^i}= \mathrm{inf}(\phi_{i-1})|_{^0K^i}\otimes (\tilde\phi_{i-1}|_{^0K^i}) =  \mathrm{inf}(\phi_{i-1})|_{^0K^i} \otimes  (^0\tilde\phi_{i-1})$ is irreducible. 

The construction of $\tilde\phi_{i-1}$ recalled above also shows that $\phi_{i-1}'|_{J^i}$ is irreducible. Now, let ${}^0\rho_0'$ be an irreducible summand of $\rho_0'|_{{}^0 K^0}$ and set ${}^0\rho_i' := \mathrm{inf}({}^0\rho_{i-1}') \otimes (\phi_{i-1}'|_{{}^0K^i})$ where $\mathrm{inf}({}^0\rho_{i-1}')$ is the representation of ${}^0 K^i$ via \eqref{inflation0}. Note that
\[\End_{\BC[{}^0 K^i]}({}^0\rho_i') = \End_{\BC[{}^0 K^{i-1}]}({}^0\rho_{i-1}')\] because $J^i$ acts trivially on $\mathrm{inf}({}^0\rho_{i-1}')$ and because $\phi_{i-1}'|_{J^i}$ is irreducible. Hence  ${}^0\rho_i' $ is an irreducible summand of $\rho_i'|_{{}^0K^i}$.  To finish the proof of (2), we need to show that 
\[\dim_\BC \Hom_{{}^0 K^i}({}^0\rho_i', \rho_i') =  \dim_\BC \Hom_{{}^0 K^{i-1}}({}^0\rho_{i-1}', \rho_{i-1}').\]
This again holds because $J^i$ acts trivially on $\mathrm{inf}({}^0\rho_{i-1}')$ and on $\mathrm{inf}(\rho_{i-1}')$ by \eqref{inflation} and \eqref{inflation0} respectively, and because $\phi_{i-1}'|_{J^i}$ is irreducible. 
\end{proof}

\begin{corollary}\label{CorHAYu}
Let $G$ be a connected, reductive group over $F$ and let $\pi_{-1} = \ind_{K^0}^{G^0} \rho_{-1}$ be the depth zero supercuspidal representation of $G^0$ that is part of the Yu datum. Let $\pi = \pi_d$ be a tame supercuspidal representation of $G$ of depth $r_d$, obtained as in Yu's construction above. Let $(^0K^d, ^0\rho_d)$ be the corresponding $[G, \pi_d]_{G}$-type described above. Then
\begin{enumerate}
    \item $\pi_{-1}|_{^0G^0}$ is multiplicity free if and only if $\pi_d|_{^0G^d}$ is multiplicity free.
    \item $ \CH(G^0, ^0\rho_{-1})$ is commutative if and only if $\CH(G, ^0\rho_d)$ is commutative.
\end{enumerate}

\end{corollary}
\begin{proof}
This corollary follows from Lemma \ref{LemmaonYu} and Proposition \ref{Prop:HAComm}. 
\end{proof}
We remark here that (2) of the preceding corollary can also be deduced as an obvious consequence of \cite[Theorem 4.5]{Oha21}. When one merely wants to compare the commutativity of these Hecke algebras, the above provides an alternate (elementary) argument. 
\subsection{The Bernstein center of $\CH(G,K)$}
We now assimilate the results in the preceding sections to describe the Bernstein center of $\CH(G, K)$ for $K \in \CK^\spade(S,G)$ and for $K \in \CK^\heart(S,G)$ via the theory of types.

\begin{corollary}\label{Cor1} Let $\bf S$ be a maximal $F$-split torus in $\bf G$ and let $K \in \CK^\spade(S,G)$. 
We have
\[\fkZ(\CH(G,K)) = \prod_{\fks \in \fkS(K)} \fkZ(\CH(G,\rho)) \simeq \prod_{\fks \in \fkS(K)} \BC[{}^\dag J_M/J_M]^{W(\rho_M)}\]
where, for $\fks = [M, \sigma]_G$, $(J_M, \rho_M)$ is a supercuspidal type of $\fks_M$, and $(J,\rho)$ is a $G$-cover of $(J_M, \rho_M)$.
\end{corollary}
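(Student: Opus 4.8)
The plan is to deduce the statement from three inputs that are already in place: (i) for $K\in\CK^\spade(S,G)$ the $\spade_S$-analogue of Proposition~\ref{categories}, namely \cite[Proposition~5.1]{BS20}, which says that $\fkR_K(G)=\prod_{\fks\in\fkS(K)}\fkR^\fks(G)$ is a \emph{finite} product of Bernstein blocks and that $V\mapsto V^K$ is an equivalence $\fkR_K(G)\simeq\CH(G,K)-\Mod$; (ii) the fact that each block $\fkR^\fks(G)$, $\fks\in\fkS(K)$, carries an $\fks$-type $(J,\rho)$ with $\fkR^\fks(G)\simeq\CH(G,\rho)-\Mod$; and (iii) Theorem~\ref{Satakegen}, which evaluates $\fkZ(\CH(G,\rho))$ for such a type. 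The glue is the standard fact that the Bernstein center of a block $\fkR^\fks(G)$ — equivalently, the endomorphism ring of its identity functor — is isomorphic to the center of any algebra whose module category realises that block; so $\CH(G,K)_\fks$ and $\CH(G,\rho)$ will be forced to have isomorphic centers.

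First I would split $\CH(G,K)$. Because $\fkS(K)$ is finite, the Bernstein decomposition $\fkR_K(G)=\prod_{\fks\in\fkS(K)}\fkR^\fks(G)$ transports, across the equivalence $\fkR_K(G)\simeq\CH(G,K)-\Mod$, to a decomposition of unital algebras $\CH(G,K)=\prod_{\fks\in\fkS(K)}\CH(G,K)_\fks$: the unit $e_K$ breaks up as a finite sum of orthogonal central idempotents (the images of the Bernstein idempotents $e_\fks$), and $\CH(G,K)_\fks-\Mod\simeq\fkR^\fks(G)$. Taking centers, $\fkZ(\CH(G,K))=\prod_{\fks\in\fkS(K)}\fkZ(\CH(G,K)_\fks)$, so it remains to identify each factor with $\BC[{}^\dag J_M/J_M]^{W(\rho_M)}$.

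Next, fix $\fks=[M,\sigma]_G\in\fkS(K)$ and put ourselves in the situation of Theorem~\ref{Satakegen}. Under Assumption~\ref{Fin} the Levi $\bf M$ again splits over a tame extension of $F$ and $p\nmid|W(\mathbf M)|$ (since $|W(\mathbf M)|$ divides $|W(\mathbf G)|$), so by \cite[Theorem~8.1]{Fin21} the supercuspidal $\sigma$ of $M$ arises from Yu's construction applied to $M$; in particular $\sigma=\cind_{\tilde J_M}^M\tilde\rho_M$ for an open subgroup $\tilde J_M$ compact mod center, and, setting $J_M={}^0M\cap\tilde J_M$ and $\rho_M$ an irreducible summand of $\tilde\rho_M|_{J_M}$, the pair $(J_M,\rho_M)$ is an $\fks_M$-type with $\CI_M(\rho_M)\subset\tilde J_M$ by Lemma~\ref{LemmaonYu}(1) applied with $\bf G$ replaced by $\bf M$. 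By \cite[Theorem~7.12]{Fin21} there is a $G$-cover $(J,\rho)$ of $(J_M,\rho_M)$, which is an $\fks$-type by \cite[Theorem~8.3]{BK98}; hence $\fkR^\fks(G)\simeq\CH(G,\rho)-\Mod$. Composing $\CH(G,K)_\fks-\Mod\simeq\fkR^\fks(G)\simeq\CH(G,\rho)-\Mod$ and using invariance of the center under Morita equivalence gives $\fkZ(\CH(G,K)_\fks)\cong\fkZ(\CH(G,\rho))$, and Theorem~\ref{Satakegen} rewrites the right-hand side as $\BC[{}^\dag J_M/J_M]^{W(\rho_M)}$. Reassembling over $\fks\in\fkS(K)$ yields the displayed formula.

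The part that genuinely needs care — as opposed to the formal manipulations above — is verifying that the hypotheses of Theorem~\ref{Satakegen} are met by the types Fintzen's theorem hands us: that $\sigma$ on the Levi is compactly induced in precisely the required shape, and above all that the intertwining containment $\CI_M(\rho_M)\subset\tilde J_M$ holds. This is exactly Lemma~\ref{LemmaonYu} with $\bf G$ replaced by $\bf M$, and the only thing to record is that Assumption~\ref{Fin} is inherited by Levi subgroups, which legitimises that substitution. A more bookkeeping-level point is to check that the decomposition of $\CH(G,K)$ along $\fkS(K)$ is an honest product of algebras with unit; this is safe precisely because $\fkS(K)$ is finite, so $e_K$ is a finite sum of the $e_\fks$.
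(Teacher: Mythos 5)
Your proof is correct and takes essentially the same route as the paper's (very terse) one: apply Theorem~\ref{Satakegen} blockwise, with the key verification being that $\CI_M(\rho_M)\subset\tilde J_M$ holds for the Fintzen/Kim--Yu types, via Lemma~\ref{LemmaonYu} applied to $\bf M$. Your spelled-out observations — that Assumption~\ref{Fin} passes to Levi subgroups so that Lemma~\ref{LemmaonYu} legitimately applies with $\bf G$ replaced by $\bf M$, and that the finiteness of $\fkS(K)$ makes the idempotent decomposition of $\CH(G,K)$ an honest product of unital algebras — are exactly the points the paper leaves implicit.
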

\begin{proof}
This is a consequence of Theorem \ref{Satakegen}. Note that we can apply Theorem \ref{Satakegen} since the assumption $\CI_M(\rho_M) \subset \tJ_M$ is satisfied by Lemma \ref{LemmaonYu}.
\end{proof}

Now, we describe the center of $\CH(G,K)$ for $K \in \CK^\heart(S,G)$. Let $\bf M$ be a Levi subgroup of $\bf G$ that contains $\bf S$  and let 
\[l_{ M}^{G}: \fkB(M) \rightarrow \fkB(G), \;\;[M, \sigma]_M \rightarrow [M,\sigma]_G.\]
Let $\fkS(K \cap M)_{sc} = \{[M, \sigma]_M \in \fkS(K \cap M)\;|\;  \sigma \text{ a supercuspidal representation of $M$}\}$. 
\begin{corollary}\label{Cor2} Let $K \in \CK^\heart(S,G)$.  We have
\[\fkZ(\CH(G,K)) \simeq \prod_{[M]}\prod_{\fks_M \in \fkS(K\cap M)_{sc}} \BC[{}^\dag J_M/J_M]^{W(\rho_M)}\]
where $[\bf M]$ runs through the $\bf G$-conjugacy classes of $F$-Levi subgroups of $\bf G$ and $\bf M$ is a representative in this conjugacy class that contains $\bf S$. 
\end{corollary}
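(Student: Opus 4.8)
The plan is to split $\CH(G,K)\text{-}\Mod$ into the Bernstein blocks it contains, describe the center of each block via Theorem~\ref{Satakegen}, and then re-express the resulting product of algebras in terms of supercuspidal data on Levi subgroups.

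First, since $K\in\CK^\heart(S,G)$, Proposition~\ref{categories} provides a finite set $\fkS_K\subset\fkB(G)$ with $\fkR_K(G)=\prod_{\fks\in\fkS_K}\fkR^\fks(G)$ and an equivalence $\fkR_K(G)\simeq\CH(G,K)\text{-}\Mod$. Under Assumption~\ref{Fin} each block $\fks=[M,\sigma]_G\in\fkS_K$ has a $G$-cover $(J,\rho)$ of the supercuspidal type $(J_M,\rho_M)$ attached to $\fks_M=[M,\sigma]_M$, with $\sigma$ itself produced by Yu's construction; in particular the hypothesis $\CI_M(\rho_M)\subset\tJ_M$ of Theorem~\ref{Satakegen} holds by Lemma~\ref{LemmaonYu}, and $\fkR^\fks(G)\simeq\CH(G,\rho)\text{-}\Mod$. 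An equivalence of abelian categories identifies their centers, the center of $\CH(G,K)\text{-}\Mod$ is $\fkZ(\CH(G,K))$, and the center of a finite product of categories is the product of the centers, so
\[\fkZ(\CH(G,K))\;\cong\;\prod_{\fks\in\fkS_K}\fkZ(\CH(G,\rho_\fks))\;\cong\;\prod_{\fks\in\fkS_K}\BC[{}^\dag J_M/J_M]^{W(\rho_M)},\]
the second isomorphism being Theorem~\ref{Satakegen} applied blockwise, with ${\bf M}\supset{\bf S}$ the chosen representative of the Levi class of $\fks$ and $(J_M,\rho_M)$ the associated data.

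It remains to re-index this product. Fix a representative ${\bf M}\supset{\bf S}$ of each ${\bf G}$-conjugacy class $[{\bf M}]$ of $F$-Levi subgroups and recall $l_M^G\colon\fkB(M)\to\fkB(G)$. For a supercuspidal $\fks_M=[M,\sigma]_M$ the irreducible objects of $\fkR^{\fks_M}(M)$ are exactly the unramified twists $\sigma\otimes\nu$; since $K\cap M$ is compact it lies in ${}^0M\subset\ker\nu$, so $(\sigma\otimes\nu)^{K\cap M}=\sigma^{K\cap M}$, whence $\fks_M\in\fkS(K\cap M)_{sc}\iff\sigma^{K\cap M}\neq 0$. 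On the other hand Lemma~\ref{BuProp} gives $l_M^G(\fks_M)=[M,\sigma]_G\in\fkS_K\iff\sigma^{K\cap M}\neq 0$. Moreover, applying the first condition of Definition~\ref{heart} to $K$ and to its $N_G(M)$-conjugates (together with opposite parabolics with Levi ${\bf M}$) shows that the $M$-conjugacy class of $K\cap M$ — more precisely of $K_M=K\cap P/K\cap N$ — is stable under conjugation by $N_G(M)$, so that the condition $\sigma^{K\cap M}\neq 0$ depends only on the $W(M)=N_G(M)/M$-orbit of $\fks_M$. Together these facts say that $l_M^G$ carries $\bigsqcup_{[{\bf M}]}\fkS(K\cap M)_{sc}$ onto $\fkS_K$ compatibly with the Satake identifications above; substituting into the displayed product gives the asserted isomorphism.

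The substantive point is the re-indexing step. One must check with care that the correspondence $\fks\leftrightarrow([{\bf M}],\fks_M)$ is well-defined, that $K_M$ inherits the relevant property in $M$ so that $\fkS(K\cap M)$ makes sense, and that the bookkeeping of ${\bf G}$-conjugacy of Levi subgroups, of the $W(M)$-orbit structure relating $\fkS(K\cap M)_{sc}$ to $\fkS_K$, and of the (harmless but easily mishandled) distinction between $K\cap M$ and $K_M=K\cap P/K\cap N$ all match up. This is also the only place where the stronger property $\heart_S$ is genuinely used rather than merely $\spade_S$ (the latter already suffices for Corollary~\ref{Cor1}): the example of Section~\ref{CounterEg} exhibits a group $K\notin\CK^\heart(S,G)$ for which the required stability of $K\cap M$ under $N_G(M)$ fails, so the hypothesis $K\in\CK^\heart(S,G)$ is essential here.
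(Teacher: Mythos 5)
Your proof is correct and follows essentially the same route as the paper's: first apply the Satake isomorphism (Theorem~\ref{Satakegen}) blockwise over $\fkS_K$ (this is precisely Corollary~\ref{Cor1}, which the paper's proof simply cites), then re-index the resulting product via Lemma~\ref{BuProp}, which gives $\fkS(K)=\bigsqcup_M l_M^G(\fkS(K\cap M)_{sc})$. The extra discussion you include --- that $(\sigma\otimes\nu)^{K\cap M}=\sigma^{K\cap M}$ for unramified $\nu$, the $N_G(M)$-stability coming from condition (1) of Definition~\ref{heart}, and the observation that this re-indexing is the one place $\heart_S$ is genuinely needed over $\spade_S$ --- is a correct and useful unpacking of what Lemma~\ref{BuProp} encapsulates, but it does not change the argument's structure.
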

\begin{proof}
Note that by Lemma \ref{BuProp}, we have
\[\fkS(K) = \bigsqcup_M l_M^G(\fkS(K\cap M)_{sc}).  \]
Hence the corollary follows from Corollary \ref{Cor1}.
\end{proof}

\end{document}